	\newcommand{\arxiv}[1]{\href{http://arxiv.org/abs/#1}{\tt arXiv:\nolinkurl{#1}}}
	\newcommand{\arXiv}[1]{\href{http://arxiv.org/abs/#1}{\tt arXiv:\nolinkurl{#1}}}
	\newcommand{\googlebooks}[1]{(preview at \href{http://books.google.com/books?id=#1}{google books})}
	\definecolor{dark-red}{rgb}{0.7,0.25,0.25}
	\definecolor{dark-blue}{rgb}{0.15,0.15,0.55}
	\definecolor{medium-blue}{rgb}{0,0,.8}
	\definecolor{DarkGreen}{RGB}{0,150,0}
	\definecolor{rho}{named}{red}
\theoremstyle{plain}
\newtheorem{thm}{Theorem}[section]
\newtheorem*{thm*}{Theorem}
\newtheorem{thmalpha}{Theorem}
\newtheorem{cor}[thm]{Corollary}
\newtheorem*{cor*}{Corollary}
\newtheorem*{conj*}{Conjecture}
\newtheorem{lem}[thm]{Lemma}
\newtheorem{prop}[thm]{Proposition}
\newtheorem*{quest*}{Question}
\newtheorem*{claim*}{Claim}
\theoremstyle{definition}
\newtheorem{defn}[thm]{Definition}
\newtheorem{ex}[thm]{Example}
\newtheorem*{ex*}{Example}
\newtheorem{sub-ex}[thm]{Sub-Example}
\newtheorem{counter-ex}[thm]{Counter-Example}
\newtheorem{rem}[thm]{Remark}
\newtheorem*{rem*}{Remark}
\newtheorem{remark}[thm]{Remark}
\DeclareMathOperator{\Ad}{Ad}
\DeclareMathOperator{\End}{End}
\DeclareMathOperator{\Hom}{Hom}
\DeclareMathOperator{\op}{op}
\DeclareMathOperator{\spann}{span}
\DeclareMathOperator{\id}{id}
\DeclareMathOperator{\Irr}{Irr}
\DeclareMathOperator{\tr}{tr}
\DeclareMathOperator{\II}{II}
\newcommand{\comment}[1]{}
\newcommand{\bbOne}{\mathbbm{1}}
\newcommand\mapsfrom{\mathrel{\reflectbox{\ensuremath{\mapsto}}}}
\newcommand{\noshow}[1]{}
\newcommand{\MR}[1]{}
\newcommand{\Rep}{{\sf Rep}}
\newcommand{\Mod}{{\sf Mod}}
\newcommand{\Bim}{{\sf Bim}}
\newcommand{\fgpBim}{{\sf Bim_{fgp}}}
\newcommand{\fgpBimtr}{{\sf Bim_{fgp}^{tr}}}
\newcommand{\spbfBim}{{\sf Bim_{bf}^{sp}}}
\renewcommand{\Vec}{{\sf Vec}}
\newcommand{\Hilb}{{\sf Hilb}}
\newcommand{\rCorr}{{\mathsf{C^{*}Alg}}}
\newcommand{\vNA}{{\sf vNA}}
\newcommand{\CDisc}{{\sf C^*Disc}}
\newcommand{\<}{\langle}
\renewcommand{\>}{\rangle}
\newcommand{\cC}{\mathcal C}
\newcommand{\cB}{\mathcal B}
\newcommand{\bbD}{\mathcal D}
\newcommand{\cE}{\mathcal E}
\newcommand{\cF}{\mathcal F}
\newcommand{\cL}{\mathcal L}
\newcommand{\cX}{\mathcal X}
\newcommand{\cZ}{\mathcal Z}
\newcommand{\Clin}{{\mathsf{C^{*}lin}}}
\newcommand{\Surf}{{\mathsf{Surf}}}
\def\semicolon{;}
\def\applytolist#1{
    \expandafter\def\csname multi#1\endcsname##1{
        \def\multiack{##1}\ifx\multiack\semicolon
            \def\next{\relax}
        \else
            \csname #1\endcsname{##1}
            \def\next{\csname multi#1\endcsname}
        \fi
        \next}
    \csname multi#1\endcsname}
\def\calc#1{\expandafter\def\csname c#1\endcsname{{\mathcal #1}}}
\def\bbc#1{\expandafter\def\csname bb#1\endcsname{{\mathbb #1}}}
\def\bfc#1{\expandafter\def\csname bf#1\endcsname{{\mathbf #1}}}
\def\sfc#1{\expandafter\def\csname s#1\endcsname{{\sf #1}}}
\def\fc#1{\expandafter\def\csname f#1\endcsname{{\mathfrak #1}}}
\def\-{{\hbox{-}}}
\tikzset{vertex/.style = {shape=circle,draw,fill=black,inner sep=0pt,minimum size=5pt}}
\tikzset{edge/.style = {->,> = latex', bend right}}
\tikzset{
	super thick/.style={line width=3pt}
}
\tikzset{
    quadruple/.style args={[#1] in [#2] in [#3] in [#4]}{
        #1,preaction={preaction={preaction={draw,#4},draw,#3}, draw,#2}
    }
}
\tikzstyle{shaded}=[fill=red!10!blue!20!gray!30!white]
\tikzstyle{unshaded}=[fill=white]
\tikzstyle{empty box}=[circle, draw, thick, fill=white, opaque, inner sep=2mm]
\tikzstyle{annular}=[scale=.7, inner sep=1mm, baseline]
\tikzstyle{rectangular}=[scale=.75, inner sep=1mm, baseline=-.1cm]
\tikzstyle{mid>}=[decoration={markings, mark=at position 0.5 with {\arrow{>}}}, postaction={decorate}]
\tikzstyle{mid<}=[decoration={markings, mark=at position 0.5 with {\arrow{<}}}, postaction={decorate}]
\tikzstyle{over}=[double, draw=white, super thick, double=]
\tikzstyle{primedregion}[none]=[
\tikzstyle{primedregion2}[none]=[
\let\OLDthebibliography\thebibliography
\renewcommand\thebibliography[1]{
  \OLDthebibliography{#1}
  \setlength{\parskip}{0pt}
  \setlength{\itemsep}{0pt plus 0.3ex}
}
\begin{document}
\title{Inclusions of Operator Algebras from Tensor Categories: beyond irreducibility}
\author{Lucas Hataishi and Roberto Hern\'{a}ndez Palomares}

\newcommand{\Addresses}{{% additional braces for segregating \footnotesize
  \bigskip
  \footnotesize

  Lucas~Hataishi, \textsc{University of Oxford}\par\nopagebreak
  \textit{E-mail address}: \texttt{lucas.hataishi@gmail.com}

  \medskip

  Roberto~Hern\'{a}ndez~Palomares , \textsc{University of Waterloo}\par\nopagebreak
  \textit{E-mail address}:
  \texttt{robertohp.math@gmail.com}

}}

%\affil{University of Oxford}
%\email{lucas.yudihataishi@maths.ox.ac.uk}
%\affil{University of Waterloo}
%\email{robertohp.math@gmail.com}
\date{\today}

\maketitle

\begin{abstract}
     We derive faithful inclusions of C*-algebras from a coend-type construction in unitary tensor categories. This gives rise to different potential notions of discreteness for an inclusion in the non-irreducible case, and provides a unified framework that encloses the theory of compact quantum group actions. We also provide examples coming from semi-circular systems and from factorization homology. 
     In the irreducible case, we establish conditions under which the C*-discrete and W*-discrete conditions are equivalent.
\end{abstract}

\section{Introduction}
Inclusions of operator algebras provide a flexible framework to study classical and quantum symmetries of noncommutative spaces.  Jones' landmark discovery of the Index Rigidity Theorem for subfactors \cite{MR696688} launched the construction and classification of numerous new examples of subfactors, and fueled interactions with other fields such as low-dimensional topology, statistical mechanics, and quantum field theory. (See \cite{MR1473221} for a survey). 

Subsequently, \emph{discrete subfactors} were introduced in \cite{MR1622812} as a broad class of inclusions $N \subset M$ sharing properties of crossed products by outer actions of discrete groups on factors. They generally do not have finite index, but can be characterized either by a generating property in terms of the {\em quasi-normalizer} of $N$ in $M$, or by a decomposition of the space of bounded $N \- N$-bimodular operators on $L^2(M)$ as a product of type I factors.

Associated to a subfactor $N\subset M$ is its \emph{standard invariant}, $\cC_{N\subset M}$, that can be axiomatized using Jones' planar algebras \cite{MR4374438} or Popa's $\lambda$-lattices \cite{MR1334479}. The standard invariant classifies amenable subfactors of the hyperfinite $\rm II_1$-factor $\mathcal R$ \cite{MR1278111}.  The formulation of the standard invariant that is relevant to us follows the philosophy advocated in \cite{JP17,MR1966524,MR1966525}, and goes roughly as follows.

Let $N$ be a a $\II_1$-factor $N$, and denote by $\Bim(N)$ the category of $N\-N$-Hilbert bimodules. Suppose we are given a subfactor $N \subset M$. Consider the Hilbert space completion $L^2(M)$ with respect to the trace. Looking at it as a $N \-N$-bimodule, it decomposes into a direct sum of irreducible ones. Let $\cC_{N \subset M}$ be the subcategory of $\Bim(N)$ generated by those irreducible bimodules, their direct sums and  their fusion products. By construction, $\cC_{N \subset M}$ is a C$^*$-tensor subcategory of $\Bim(N)$. The factor $M$ induces a functor $\bbM: (\cC_{N \subset M})^{\op} \to \Vec$, defined on objects by $\bbM(K) = \Hom_{N-M}(K \underset{N}{\boxtimes} L^2(M),L^2(M))$. Then the standard invariant of $N \subset M$ is the pair $(\cC_{N \subset M}, \bbM)$. The main result in \cite{MR3948170} is that the standard invariants $(\cC_{N \subset M}, \bbM)$ classify extremal irreducible discrete extensions $N \subset M$. We say that $\ \cC_{N \subset M}$ acts on $N$ by means of the inclusion functor $ \cC_{N \subset M} \hookrightarrow \Bim(N)$. The procedure by which $M$ is reconstructed from the standard invariant is called the realization or {\em crossed product} of $N$ by $\bbM$, and it is denoted by $N \rtimes \bbM$.  We say that $\bbM$ is the {\em C$^*$-algebra object}, in the algebraic ind-completion $\Vec(\cC_{N \subset M}) := \{ \text{linear functors} \ F: \cC_{N \subset M}^{\op} \to \Vec \}$ of  $ \cC_{N \subset M}$, associated to the subfactor $N \subset M$.

In \cite{2023arXiv230505072H}, the second-named author and Nelson introduced the abstract properties of discreteness and projective-quasi-regularity for unital irreducible inclusions of C*-algebras $A\overset{E}{\subset} B$. 
Akin to discrete subfactors, irreducible C*-discrete inclusions are characterized by a standard invariant comprising an action of a UTC and a connected C*-algebra object in its algebraic ind-completion. 
The class of C*-discrete inclusions encompasses all finite Watatani-index, certain semicircular systems, and cores of Cuntz algebras, as well as crossed products by actions of discrete (quantum) groups. 

A main objective of this manuscript is to extend the standard invariant to non-irreducible inclusions of C*-algebras. A prominent class of examples comes from the Morita theory of compact quantum group actions, the relevant inclusions beings $B^\bbG \subset B$, where $B$ is a C*-algebra acted by a compact quantum group and $B^\bbG$ is the fixed point C*-subalgebra. Such inclusions have a well understood categorical description based on Tannaka-Krein duality \cite{MR3121622,MR3420332,MR3426224}. Such an approach to quantum group actions has been very useful in classification problems, e.g. \cite{MR3837600}. In this context, irreducibility corresponds to ergodicity of the action of the compact quantum group. Cores of Cuntz algebra fall into this class of examples.

Another source of motivation for this work comes from {\em factorization homology} with values in C*-categories (cf \cite{2023arXiv230407155H}).
Starting with a unitarily braided unitary tensor category $\cC$, factorization homology gives a functorial construction associating a C*-tensor category $F_\cC(\Sigma)$ to every compact oriented surface $\Sigma$. When $\Sigma$ has non-empty boundary, $F_\cC(\Sigma)$ is described by a C$^*$-algebra object that is generally not connected.

We shall now describe the contents of this article. Section 2 is a preliminary section where we gather some background material, fix notations and conventions. In Section 3 we introduce the {\em coend realization} construction that will serve as main tool in this paper. Given a UTC $\cC$, C*-algebra objects $\bbA$ in $\Vec(\cC^{\op})$ and $\bbD$ in $\Vec(\cC)$, it follows from \cite[\S 4.4]{MR3948170}, the existence of a C*-algebra $C^*_u| \bbA \odot \bbD|$. We show that there is also a minimal completion, which we denote by $\bbA \bowtie \bbD$, and call it the {\em coend realization} of the pair $(\bbA,\bbD)$. Let $\bbOne$ be a tensor unit in $\cC$. We then prove the following.

\begin{thmalpha}[Theorem \ref{thm:condexponthecoendrealization}]\label{thmalpha:condexponthecoendrealization}
    There is a canonical faithful conditional expectation $\bbE: \bbA \bowtie \bbD \to \bbA(1) \otimes \bbD(\bbOne)$.
\end{thmalpha}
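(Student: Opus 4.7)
My plan is to define $\bbE$ first on the algebraic pre-C*-algebra underlying the coend $\bbA \odot \bbD$, verify the conditional expectation axioms there, and then extend by continuity to the minimal completion $\bbA \bowtie \bbD$. By semisimplicity of $\cC$, the vector space $\bbA \odot \bbD$ decomposes as a direct sum indexed by $c \in \Irr(\cC)$, with the $c$-summand constructed from pairings of vectors in $\bbA(c)$ and $\bbD(c)$ modulo the coend relations; the summand at $c = \bbOne$ is canonically $\bbA(\bbOne) \otimes \bbD(\bbOne)$. I would then define $\bbE_0$ to be the linear projection onto this $\bbOne$-summand, annihilating every non-trivial isotypic component. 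Well-definedness follows because the coend identification $\bbA(f)(a) \otimes d \sim a \otimes \bbD(f)(d)$, for a morphism $f: c \to c'$ between simple objects, respects this direct sum decomposition by Schur's lemma: no mixing between distinct isotypes occurs, so the projection descends to the quotient.

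Next I would verify the conditional expectation axioms on $\bbA \odot \bbD$. Bimodularity over $\bbA(\bbOne) \otimes \bbD(\bbOne)$ is a grading argument: the algebra structures on $\bbA$ and $\bbD$ convolve along morphisms out of $\bbOne$, so multiplication by an element at the tensor unit preserves each isotypic component. This gives $\bbE_0(x b y) = x\,\bbE_0(b)\,y$ for $x, y \in \bbA(\bbOne) \otimes \bbD(\bbOne)$ and $b \in \bbA \odot \bbD$. The $*$-operation on the coend similarly preserves the grading, so $\bbE_0$ is $*$-preserving. For positivity, I would realize $\bbE_0$ as an orthogonal projection with respect to the sesquilinear pairing $(x, y) \mapsto \varphi(\bbE_0(y^* x))$ on $\bbA \odot \bbD$, where $\varphi$ is any faithful state of $\bbA(\bbOne) \otimes \bbD(\bbOne)$. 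This setup automatically yields the Schwarz-type inequality $\bbE_0(b)^* \bbE_0(b) \leq \bbE_0(b^* b)$, together with the contractive bound needed to extend $\bbE_0$ to any C*-completion in which multiplication by a faithful positive functional separates points.

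The main obstacle will be extending $\bbE_0$ continuously to $\bbA \bowtie \bbD$ while preserving faithfulness, and this is where I expect the \emph{minimality} of $\bbA \bowtie \bbD$ to be crucial. The natural candidate for the minimal completion is the image of the GNS representation $\pi_\varphi$ induced by $\varphi \circ \bbE_0$ for a faithful $\varphi$; if $\bbA \bowtie \bbD$ is defined (or characterized) in this way, then continuity of the extension $\bbE$ is automatic from the Schwarz inequality, and faithfulness follows because $\varphi \circ \bbE(b^* b) = \|\pi_\varphi(b) \xi_\varphi\|^2$ with $\xi_\varphi$ the cyclic vector, so $\bbE(b^* b) = 0$ forces $\pi_\varphi(b) = 0$, hence $b = 0$ in $\bbA \bowtie \bbD$. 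If the minimal completion is defined more abstractly, one must still identify it with this GNS completion by checking that the GNS norm dominates every C*-seminorm for which $\bbE_0$ is bounded, which should reduce to the observation that $\varphi \circ \bbE_0$ is a separating positive functional on the algebraic coend. Canonicity of $\bbE$ is then evident, as it depends only on the categorical projection onto the tensor unit isotype and not on $\varphi$.
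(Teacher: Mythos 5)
There is a genuine gap in your faithfulness argument, and it sits exactly where the real work of the theorem lies. You write that $\varphi\circ\bbE(b^*b)=\|\pi_\varphi(b)\xi_\varphi\|^2$, ``so $\bbE(b^*b)=0$ forces $\pi_\varphi(b)=0$.'' It does not: vanishing of $\|\pi_\varphi(b)\xi_\varphi\|$ only forces $\pi_\varphi(b)\xi_\varphi=0$, i.e.\ $b$ annihilates the cyclic vector, not that $b$ acts as zero on the whole GNS space. (Already for $M_2(\bbC)$ with a vector state, the GNS representation is faithful while the state is not, so a positive element can be killed by the expectation without being zero.) The entire content of the paper's Theorem~\ref{thm:condexponthecoendrealization} is the passage from $T\Omega=0$ to $TS\Omega=0$ for all $S$ in the algebraic coend, after which cyclicity of $\Omega$ finishes the proof. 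That passage is not formal. The paper achieves it with two ingredients your proposal never invokes: (i) the local Pimsner--Popa inequality $\|T^{(X)}\Omega\|\le\|T^{(X)}\|\le d_X^2\,\|T^{(X)}\Omega\|$ on each isotypic component $\bbA(X)\odot\bbD(X)$ (Lemma~\ref{lem:Pimsner-Popaineq}), which comes from the finite-index expectations $\bbE^\bbA_X\otimes\bbE^\bbD_X$ and upgrades ``$T^{(X)}_\lambda\Omega\to 0$'' to ``$T^{(X)}_\lambda\to 0$ in operator norm''; and (ii) a Frobenius-reciprocity counting argument showing that for fixed $Z$ and $S$ supported at $Y$, only the finitely many components $X\le Z\boxtimes\overline{Y}$ of an approximating net $T_\lambda$ contribute to $P_ZT_\lambda S\Omega$, so one may interchange the limit with the (finite) sum. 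Without these two steps you cannot conclude $T=0$, and your Schwarz-inequality setup gives no substitute.

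A secondary issue: you build the minimal completion from the GNS representation of a scalar state $\varphi\circ\bbE_0$ for an auxiliary faithful state $\varphi$ on $\bbA(\bbOne)\otimes\bbD(\bbOne)$, and assert at the end that the result is independent of $\varphi$. The paper instead represents $|\bbA\times\bbD|$ on the Hilbert C*-module $\cE=\bigoplus^{\ell^2}_X\bbA(X)\otimes\bbD(X)$ over $\bbA(\bbOne)\otimes\bbD(\bbOne)$ and defines $\bbE(T)=\langle T\Omega,\Omega\rangle$ with the module-valued inner product; this makes canonicity automatic and keeps the expectation operator-valued from the start. Your grading decomposition, the identification of the $\bbOne$-summand with $\bbA(\bbOne)\otimes\bbD(\bbOne)$, and the bimodularity/positivity discussion are all consistent with the paper (positivity is Lemma~\ref{lemma:positivity1} there, realized as a module inner product of a vector with itself), but the proof as proposed does not establish faithfulness.
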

\noindent The proof of the theorem relies on the existence of canonical expectations of finite Pimsner-Popa indices $\bbE^{\bbA}_X: \bbA(\overline{X} X) \to \bbA(1)$ and $\bbE^\bbD_X: \bbD(\overline{X} X) \to \bbD(\bbOne)$, which allow for a local comparison between operator norms and Hilbert C*-module norms.

We then consider the above construction to the case when $\bbA$ is the action of a UTC on a unital C*-algebra $A$ with trivial center. This means that $\bbA$ is a unitary tensor functor $\bbA: \cC \to \fgpBim(A)$ from $\cC$ to the UTC of finitely generated projective bimodules of $A$. In this context, we write $A \rtimes \bbD$ for $\bbA \bowtie \bbD$, and call it the crossed product of $A$ by $\bbD$ under the action of $\cC$.
Then, by applying Theorem A, since $\bbA(1) = A$, we obtain the inclusions
\[ A \subset A \otimes \bbD(\bbOne) \overset{\bbE}{\subset} A \rtimes \bbD, \]
where the last inclusion is faithful. It is then easy to see, assuming $\cZ(A) \simeq \bbC$, that the possible descents of $\bbE: A \rtimes \bbD \to A \otimes \bbD(\bbOne)$ to a conditional expectation $E: A \rtimes \bbD \to A$ are parametrized by states on the C$^*$-algebra $\bbD(\bbOne)$.

As mentioned earlier, in the case of a $\rm II_1$-factor $N$, a discrete extension $N \subset M$ can be characterized in two ways; either in terms of the quasi-normalizer or in terms of $\End_{N-N}(L^2(M))$ being a discrete von Neumann algebras. For extensions of C$^*$-algebras, it is yet not known if the two, accordingly adapted to the C$^*$-context, are equivalent.

Taking this into account, in Section 4, we make the distinction between flavors of discreteness for a faithful inclusion $A \overset{E}{\subset} D$. Using the faithful conditional expectation $E$, we can complete $D$ to an $A\-A$-correspondence $\cE$. Discreteness and their variations  are related to whether $D$ or $\cE$ can be reconstructed from $\fgpBim(A)$. The analog concepts in the case of irreducible extensions of type $\rm II_1$-factors all coincide. 

We call a faithful inclusion $A \overset{E}{\subset} D$ discrete if the $*$-subalgebra of $D$ generated by irreducible finitely generated projective $A\-A$-subbimodules of $D$ is dense in the operator norm. 
There is then a category $\CDisc(A)$ whose objects are discrete inclusions $A \overset{E}{\subset}D$ and whose morphisms  $(A \overset{E_1}{\subset}D_1) \to (A \overset{E_2}{\subset}D_2)$ are expectation-preserving ucp maps $D_1 \to D_2$. 
Now the crossed product construction we introduce in Section 4 takes a C*-algebra object $\bbD \in \Vec(\fgpBim(A))$, together with a faithful state $\omega \in \bbD(\bbOne)$, and produces a discrete inclusion $A \subset A \rtimes \bbD$. We then prove the following reconstruction theorem.

\begin{thmalpha}[Theorem \ref{thm: thereconstructiontheorem}]\label{thmalpha: thereconstructiontheorem}
\label{alphathm:thereconstructiontheorem}
    Let $\rCorr_\Omega(\Vec(\fgpBim(A)))$ be the category of C*-algebra objects in  $\fgpBim(A)$ with prescribed faithful states. Morphisms are ucp maps (in the sense of \cite{JP17}, Section 4) preserving the prescribed states. There is an equivalence
    \begin{align*} \rCorr_\Omega(\Vec(\fgpBim(A))) \simeq \CDisc(A) . 
    \end{align*}
\end{thmalpha}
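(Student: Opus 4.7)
The approach is to construct a quasi-inverse
\[ \Phi: \CDisc(A) \longrightarrow \rCorr_\Omega(\Vec(\fgpBim(A))) \]
to the crossed product functor $(\bbD,\omega) \mapsto (A \subset A \rtimes \bbD)$, and to verify that both compositions are naturally isomorphic to the identity.

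For an object $A \overset{E}{\subset} D$ of $\CDisc(A)$, define $\bbD_D \in \Vec(\fgpBim(A))$ on objects by $\bbD_D(X) := \Hom_{A\-A}(X,D)$, with morphisms acting by precomposition. The multiplication and involution of $D$ induce a C*-algebra object structure on $\bbD_D$: the product of $\xi \in \bbD_D(X)$ and $\eta \in \bbD_D(Y)$ is the bimodule map $X \otimes_A Y \to D$ sending $d \otimes e \mapsto \xi(d)\eta(e)$, while the involution combines duality in $\fgpBim(A)$ with $(\cdot)^*$ on $D$. Since $\bbD_D(\bbOne) \cong A' \cap D$, the prescribed state is $\omega_E := E\rvert_{A' \cap D}$, which is faithful by faithfulness of $E$. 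Functoriality of $\Phi$ relies on the fact that an expectation-preserving ucp map $\varphi: D_1 \to D_2$ is automatically $A$-bimodular by Choi's multiplicative-domain argument (since $\varphi|_A = \id_A$), so post-composition by $\varphi$ furnishes the required state-preserving ucp morphism in the sense of \cite{JP17}.

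For the composition $\bbD \mapsto A \rtimes \bbD \mapsto \bbD_{A \rtimes \bbD}$, the embedding of the $X$-isotypic component of $A \rtimes \bbD$ induces a natural map $\bbD(X) \to \Hom_{A\-A}(X, A \rtimes \bbD) = \bbD_{A \rtimes \bbD}(X)$; both injectivity and surjectivity follow by projecting an arbitrary $A$-bimodular map $X \to A \rtimes \bbD$ onto its $X$-isotypic component via the faithful expectation $\bbE$ of Theorem \ref{thmalpha:condexponthecoendrealization}. For the opposite composition $D \mapsto \bbD_D \mapsto A \rtimes \bbD_D$, discreteness provides a surjective $*$-homomorphism from the algebraic coend $|\bbA \odot \bbD_D|$ onto the norm-dense $*$-subalgebra $D_0 \subset D$ generated by the irreducible fgp $A\-A$-subbimodules. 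By the minimality of $A \rtimes \bbD_D = \bbA \bowtie \bbD_D$, this extends to a surjective $*$-homomorphism $\pi: A \rtimes \bbD_D \twoheadrightarrow D$ intertwining the two $(A' \cap D)$-valued expectations produced by $\bbE$ and $E$, respectively.

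The principal obstacle is injectivity of $\pi$. The task reduces to verifying that the local norm comparison used in the proof of Theorem \ref{thmalpha:condexponthecoendrealization} — via the finite Pimsner--Popa-index expectations $\bbE^{\bbA}_X: \bbA(\overline X X) \to \bbA(1)$ and $\bbE^{\bbD_D}_X: \bbD_D(\overline{X}X) \to \bbD_D(\bbOne)$ — matches exactly the operator-norm structure of the span $X \cdot X^* \subset D$ for every irreducible fgp subbimodule $X$. Once this compatibility is in place, faithfulness of $\bbE$ from Theorem \ref{thmalpha:condexponthecoendrealization}, together with faithfulness of $E$, forces $\ker \pi = 0$. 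This yields $A \rtimes \bbD_D \simeq D$ as an isomorphism in $\CDisc(A)$ and, combined with the analogous check on morphism spaces using the universal property of the coend realization, completes the desired equivalence of categories.
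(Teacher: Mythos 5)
Your proposal follows the same overall strategy as the paper: the forward functor is the coend/crossed-product realization, and the quasi-inverse sends $A \overset{E}{\subset} D$ to the C*-algebra object of $A$-$A$-bimodular maps into $D$ (the paper's diamond-space functor $\rCorr_{A\-A}((-),\cE)^\diamondsuit$, given its C*-algebra object structure via C*-Frobenius reciprocity, Theorem \ref{thm:Frobeniusrecip}) together with the state induced by $E$ on $\bbD(\bbOne)\cong A'\cap D$. Where you go beyond the paper is in trying to spell out the ``mutually quasi-inverse'' verification, which the paper records as straightforward; your identification of $\bbD\cong\bbD_{A\rtimes\bbD}$ by projecting an arbitrary bimodular map onto its isotypic component via $\bbE$ is exactly the right mechanism.

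One step, however, is justified backwards. You write that the surjection $|\bbA\odot\bbD_D|\twoheadrightarrow D_0$ ``extends by minimality of $A\rtimes\bbD_D$'' to $\pi: A\rtimes\bbD_D\twoheadrightarrow D$. Minimality of a completion is an obstruction to, not a licence for, extending $*$-homomorphisms out of the dense subalgebra: a map defined on the algebraic coend extends to the reduced completion only if it is bounded for the reduced norm, which is precisely the ``norm compatibility'' you defer to your final paragraph. The correct route, implicit in the paper's setup, is that C*-discreteness implies PQR, so $\cE_D=\overline{D}^{\|\cdot\|_A}\cong|L^2_\omega\bbD_D|$, and under this unitary the defining representation of $|\bbA\times\bbD_D|$ on $\bigoplus^{\ell^2}\bbA(K)\otimes\bbD_D(K)$ is carried to the representation of $D_0\subset D$ on $\cE_D$, which is faithful and isometric because $E$ is faithful. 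Hence the reduced norm coincides with the norm inherited from $D$, the two closures agree, and $\pi$ is an isometric isomorphism; injectivity then comes for free rather than being a separate obstacle requiring the expectation-intertwining argument. With that substitution your outline matches the paper's intended argument.
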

\noindent In other words, a discrete extension of $A$ is classified by a pair $(\bbD,\omega) \in \rCorr_\Omega(\Vec(\fgpBim(A)))$.

Consider, as above, a faithful inclusion $A \overset{E}{\subset} D$, with corresponding C$^*$-algebra object $\bbD$ defined in the algebraic ind-completion of $\fgpBim(A)$. We say that $A \overset{E}{\subset} D$ is {\em projective-quasi-regular}, short PQR, the images of the canonical evaluation maps $K \odot \bbD(K) \to D$ generate, as $K$ ranges over the irreducibles in $\fgpBim(A)$, a dense subspace of the 
 completion $\cE$ of $D$ as an $A \- A$-correspondence. In Section 5 we show that the corresponding relative commutant $A' \cap \End_{\bbC \-A}(\cE) = \End_{A\-A}(\cE)$ of a PQR inclusion $A \overset{E}{\subset} D$ indeed decomposes as a direct product of type $\rm I$ factors, but in principle such a decomposition does not characterize projective quasi-regularity in the non-irreducible case. Indeed, such a decomposition characterizes $\cE$ as an object in $\Hilb(\fgpBim(A))$, i.e., as a unitary ind-object. In that case, we say that $A \overset{E}{\subset}D$ is an {\em ind-inclusion}, and prove the following generalization of Proposition 2.10 in \cite{2023arXiv230505072H}.

\begin{thmalpha}[Theorem \ref{thm:relcommutchar<-}]\label{thmalpha:relcommutchar<-}
    Let $\cE := \overline{D}^{\| \cdot \|_A}$. The inclusion $A \overset{E}{\subset}D$ is an ind-inclusion if and only if there are Hilbert spaces $\{H_i\}_{i \in I}$ such that 
    \[\End_{A\-A}(\cE) \simeq \prod_{i \in I}^{\ell^\infty} \cB(H_i) . \]
\end{thmalpha}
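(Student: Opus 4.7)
The plan is to exploit the correspondence between isotypic decompositions of $\cE$ as a Hilbert $A\-A$-bimodule and the block structure of its bimodular endomorphism algebra. Both directions are Schur-type arguments, but the reverse direction requires extra work to verify finite generation and projectivity of the irreducible summands.

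For the forward implication, suppose $\cE$ is an ind-object, so there is an $A\-A$-bimodular unitary
\[ \cE \simeq \bigoplus_{i \in I} H_i \otimes K_i, \]
where $\{K_i\}_{i \in I}$ is a family of pairwise non-isomorphic irreducibles in $\fgpBim(A)$ and $\{H_i\}_{i \in I}$ are Hilbert multiplicity spaces. By Schur's lemma in the $C^*$-tensor category $\fgpBim(A)$, $\End_{A\-A}(K_i) = \bbC$ and $\Hom_{A\-A}(K_i, K_j) = 0$ for $i \neq j$. A standard bimodule calculation then yields
\[ \End_{A\-A}(\cE) \simeq \prod_{i \in I}^{\ell^\infty} \cB(H_i), \]
the $\ell^\infty$-product reflecting the uniform norm-boundedness of bimodular endomorphisms on a Hilbert $A$-module.

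For the reverse implication, assume $\End_{A\-A}(\cE) \simeq \prod_{i \in I}^{\ell^\infty} \cB(H_i)$. The units $z_i$ of the factors are mutually orthogonal central bimodular projections in $\End_{A\-A}(\cE)$ summing strongly to $\id_\cE$, so $\cE = \bigoplus_i z_i \cE$ is an orthogonal decomposition into $A\-A$-Hilbert subbimodules. In each factor, fix a minimal projection $e_i \in \cB(H_i)$; then $K_i := e_i \cE$ satisfies $\End_{A\-A}(K_i) = e_i \cB(H_i) e_i \simeq \bbC$ and is thus an irreducible $A\-A$-bimodule. Using matrix units of $\cB(H_i)$ built from an orthonormal basis of $H_i$, one assembles a canonical $A\-A$-bimodular unitary $z_i \cE \simeq H_i \otimes K_i$, producing the required Hilbert direct sum.

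The main obstacle is showing that each irreducible $K_i = e_i \cE$ in fact lies in $\fgpBim(A)$, i.e., is finitely generated and projective as a right $A$-Hilbert module. I would address this by leveraging the density of $D$ in $\cE$ together with the conditional expectation $E : D \to A$. Approximate a nonzero vector of $K_i$ by an element $d \in D$, use $E$ to equip the $A\-A$-subbimodule generated by $d$ with its canonical $A$-valued inner product, and extract a finite Pimsner--Popa basis from the minimality of $e_i$ as a bimodular projection; the Pimsner--Popa-type norm estimates tying bimodule and operator norms (as in Theorem A) should bound the size of this basis. This strategy generalizes \cite[Proposition 2.10]{2023arXiv230505072H}; the novel difficulty is that $A$ may have nontrivial center, so $\cZ(A) \hookrightarrow \End_{A\-A}(\cE)$ must be distributed coherently among the central projections $z_i$ within the $\ell^\infty$-product decomposition.
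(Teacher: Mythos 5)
The forward direction of your argument is fine and coincides with the paper's (Lemma~\ref{lemma:relcommutcorners} and Corollary~\ref{cor:relcommutcharact->}): once $\cE\simeq\bigoplus_i H_i\otimes K_i$ with $K_i\in\Irr(A)$ pairwise non-isomorphic, Schur's lemma gives $\End_{A\-A}(\cE)\simeq\prod_i^{\ell^\infty}\cB(H_i)$.

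The converse, however, has a genuine gap, and you have correctly located where it is but not closed it. In the body of the paper the characterization (Theorem~\ref{thm:relcommutchar<-}) carries an additional hypothesis~(3): \emph{for every projection $P\in\rCorr_{A\-A}(\cE)$, $P(\cE)\in\Hilb(\fgpBim(A))$}. This is exactly the assumption that makes each corner $P_i(\cE)$ decompose into finitely generated projective irreducibles; the paper then only has to match supports. Your proposal tries to dispense with it by deducing finite generation and projectivity of $K_i=e_i\cE$ from the minimality of $e_i$, via ``extracting a finite Pimsner--Popa basis.'' This step is not justified: minimality of $e_i$ only gives $\End_{A\-A}(K_i)\simeq\bbC$, i.e.\ irreducibility as a correspondence, and an irreducible $A\-A$-correspondence need not be dualizable (fgp and bi-Hilbertian). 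The Pimsner--Popa estimates you invoke from Theorem~A are statements about fibers of C*-algebra objects over a UTC, so they \emph{presuppose} that the underlying objects are dualizable; they cannot be used to bootstrap a finite basis for an arbitrary irreducible subcorrespondence of $\cE$. A secondary unproved point: from the abstract isomorphism $\End_{A\-A}(\cE)\simeq\prod_i^{\ell^\infty}\cB(H_i)$ alone, the finite partial sums of the central supports $z_i$ need not converge \emph{strictly} to $\id_\cE$ on the Hilbert module $\cE$ (this is the paper's hypothesis~(1), again assumed rather than derived). Finally, the remark that ``the novel difficulty is that $A$ may have nontrivial center'' is off target: $\cZ(A)\simeq\bbC$ is a standing assumption throughout Section~\ref{sec:flavorsofdiscreteness}. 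To repair the argument you should either add the paper's hypothesis~(3) (and~(1)) to the statement you are proving, or supply a genuinely new argument that minimal corners of $\End_{A\-A}(\cE)$ for an inclusion with faithful expectation are automatically dualizable --- the paper does not claim this.
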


Section 5 is dedicated to examples, starting with a few considerations explaining how the theory of compact quantum group actions on C$^*$-algebras fit into our framework, a discussion that will be no novelty to the expert.

We characterize when a covariance matrix $\eta$ on a C*-algebra $A$ with trivial center leads to an ind-inclusion $A \subset \Phi(\eta)$, the latter being the C*-algebra of the corresponding $A$-valued semi-circular system \cite{Shl99}. 
We also explain how factorization homology gives rise to C*-algebra objects, and therefore to discrete inclusions through Theorem \ref{alphathm:thereconstructiontheorem}, which come with canonical actions of mapping class groups of surfaces.
We mention there are other known families of not necessarily irreducible C*-discrete inclusions arising from Cuntz-Pimsner algebras of dualizable correspondences, which were studied in \cite{2025arXiv250321515H}.

When compared C$^*$-discreteness to W$^*$-discreteness, a central improvement is that the former is defined without making reference to states, as opposed to subfactor discreteness, which carries substantial spacial data compatible with the unique trace on a $\rm{II}_1$-factor. 
It is then meaningful to ask to what extent are these notions of discreteness compatible. 
We address this question in the case where the base C*-algebra $A$ is strongly dense in $N$, and the discrete extensions $(A\subset B, E_A)$ and $((N, \tr))\subset M, E_B)$ are related by a \emph{compatible action} of $\cC$, which is compatible with the spacial data carried by the tracial state on $N$ and the associated spherical bimodules.
In this context, we prove that the double commutant of an irreducible C*-discrete inclusion gives an extremal irreducible discrete subfactor: 
\begin{thmalpha}[{Theorem~\ref{thm:vNvsC*Disc}}]\label{thmalpha:vNvsC*Disc}
Given a unitary tensor category $\cC$ and compatible actions $F:\cC\to\fgpBim(A)$ and $F'':\cC\to \spbfBim(N).$
Then, for any C*-discrete extension $A\overset{E}{\subset}B$ supported on $F[\cC]$, there is a corresponding extremal discrete subfactor $N\overset{E''}{\subset} M$ obtained as a double commutant in a compatible representation. 
\end{thmalpha}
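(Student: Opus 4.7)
The plan is to leverage Theorem~\ref{alphathm:thereconstructiontheorem} to encode the C*-discrete extension $A\overset{E}{\subset}B$ as a pair $(\bbD,\omega)$, with $\bbD\in\Vec(\fgpBim(A))$ a C*-algebra object concentrated on $F[\cC]$ and $\omega$ a faithful state on $\bbD(\bbOne)$. Pulling back along $F$ produces a C*-algebra object $\bbD_\cC\in\Vec(\cC^{\op})$, and the entire construction of $B$ is determined by this categorical data together with $\omega$. The role of the compatibility hypothesis between $F$ and $F''$ is precisely to allow me to push $\bbD_\cC$ forward to the $N$-bimodule side, where completion will yield the desired subfactor.

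Next, I would build the compatible representation as the GNS representation of $B$ with respect to the state $\varphi:=\tau_N\circ E$, where $\tau_N$ is the canonical trace on $N$ (which restricts to a faithful trace on $A$ via the strong density $A\subset N$). Setting $\cH:=L^2(B,\varphi)$ and $M:=B''\subset\cB(\cH)$, strong density upgrades to $A''=N\subset M$, and standard modular theory yields a faithful normal conditional expectation $E'':M\to N$ extending $E$. On the categorical side, one identifies the decomposition of $L^2(M)$ as an $N\-N$-bimodule with the pushforward $F''_*\bbD_\cC$: each irreducible summand should have the form $F''(X)\otimes_\bbC\bbD_\cC(X)$ for $X$ an irreducible object of $\cC$, where compatibility ensures that the strong-operator closure of $F(X)\subset B$ inside $\cH$ coincides with $F''(X)\in\spbfBim(N)$. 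This identification exhibits $N\subset M$ as the Jones--Pe\~naloza realization of the standard invariant $(F''[\cC],F''_*\bbD_\cC)$, so that discreteness and irreducibility follow from the main theorem of \cite{MR3948170}; extremality is automatic because $F''$ lands in $\spbfBim(N)$, whose objects carry matching left and right tracial weights, guaranteeing that the categorical trace matches the composition $\tau_N\circ E''$.

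The main obstacle is the bimodule identification in the second step: one must show that the $N\-N$-bimodule closure of $F(X)\subset B$ inside $L^2(B,\varphi)$ really coincides with $F''(X)$, and that the algebra structure of $\bigoplus_X F(X)\otimes\bbD_\cC(X)$ extends by strong continuity to the multiplication on $F''_*\bbD_\cC$. This is exactly where the ``compatibility'' hypothesis is designed to act: the spherical, balanced-trace nature of objects in $\spbfBim(N)$, combined with the finite Pimsner--Popa index bounds supplied by Theorem~\ref{thmalpha:condexponthecoendrealization}, provide the uniform estimates needed to interchange operator norms and Hilbert C*-module norms at each level $X\in\cC$, and so to pass from the C*- to the W*-picture coherently across all of $\cC$. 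Once these estimates are in place, both the existence of $E''$ and the identification of $M$ with a W*-coend realization become formal consequences of the constructions of Section~3.
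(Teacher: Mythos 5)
Your overall strategy matches the paper's: represent $B$ on $L^2(B,\varphi)$ with $\varphi=\tr\circ E$, take $M:=B''$, use compatibility to identify the resulting $N\-N$-bimodule with the realization Hilbert space $\bigoplus_X^{\ell^2}F''(X)\otimes L^2_\omega\bbD(X)$, and conclude that $N\subset M$ is the W*-realization of the same algebra object. However, three steps that you dispatch in a clause each are exactly where the paper has to do real work, and as written they are gaps.

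First, ``standard modular theory yields a faithful normal conditional expectation $E'':M\to N$'' is not automatic: a normal expectation onto a subalgebra exists only under Takesaki's invariance criterion, and nothing in your sketch verifies it. The paper instead constructs $E''$ by hand, showing that the Jones projection $e_A$ implementing $E$ on the C*-module $\cB$ extends to the Hilbert-space projection $e_N$ onto $L^2(N,\tr)$, and that compression $b''\mapsto e_Nb''e_N$ lands in $\End(L^2(N)_N)\cong N$. Second, ``strong density upgrades to $A''=N$'' conflates density on $L^2(N,\tr)$ with density on the different Hilbert space $L^2(B,\varphi)$; one must first show the representation $A\curvearrowright L^2(B,\varphi)$ extends to a \emph{normal} representation of $N$ (the paper invokes \cite[Lemma 3.40]{MR4139893} for this, and the same extension is needed to make your bimodule identification $N\-N$-bilinear rather than merely $A\-A$-bilinear). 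Third, invoking the classification theorem of \cite{MR3948170} presupposes that the standard invariant of $N\subset B''$ is exactly $(F''[\cC],\bbD)$ --- i.e.\ that the double commutant is neither smaller nor larger than the realization $N\rtimes_{F''}\bbD$. The ``not smaller'' direction is not formal: the paper proves it by sandwiching $N\subseteq P\subseteq M$ and applying the Galois correspondence for discrete subfactors, using density of $F(Z)$ in $F''(Z)^\circ$ to force the intermediate algebra object to be all of $\bbD$. Your proposal correctly identifies where the compatibility hypotheses enter, but until these three points are argued the proof is incomplete.
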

\noindent As a consequence, in Corollary \ref{Prop:Galois} we obtain a partial comparison between the lattices of intermediate subalgebras $A\subset D\subset B$ and $N\subset P \subset M,$ which restricts to a bijection when considering only those C*-discrete $A\subset D$.

\bigskip
\paragraph{Acknowledgements}

LH was supported by the Engineering and Physical Sciences Research Council (EP/X026647/1). RHP was partially supported by an NSERC Discovery Grant. 

\bigskip
{\em Open Access:} For the purpose of Open Access, the authors have applied a CC BY public copyright licence to any Author Accepted Manuscript (AAM) version arising from this submission

\tableofcontents

\section{Preliminaries}
\label{sec:preliminaries}
In this section we introduce notation for C$^*$-correspondences, unitary tensor categories (UTCs), and related notions that will be used throughout the paper. 
For more detailed descriptions, we refer the reader to \cite{JP17, MR3948170,MR4419534, 2023arXiv230505072H} and the references therein. Most of our notation regarding C$^*$-tensor categories and C$^*$-correspondences will be imported from \cite{2023arXiv230505072H}. 

There will be several notions of tensor products present in the article. We write $\odot$ for the algebraic tensor product. The symbol $\otimes$ will denote either the minimal tensor product of C$^*$-algebras or the external tensor product of Hilbert C$^*$-modules. The Connes fusion or relative tensor product of Hilbert C$^*$-bimodules over a C$^*$-algebra $A$ will be denoted by $\underset{A}{\boxtimes}$, or just $\boxtimes$ if the C$^*$-algebra $A$ is clear from the context.

\subsection{C*-correspondences}

Let $A$ and $B$ be unital C$^*$-algebras. A right $A \- B$-correspondence is a right Hilbert C$^*$-module $K$ over $B$ together with a unital $*$-homomorphism $\pi: A \to \cL_B(K)$, where $\cL_B(K)$ denotes the C$^*$-algebra of adjointable $B$-linear operators on $K$. When $\pi(A) (K)$ is a dense subspace of $K$, we say that $K$ is a {\em non-degenerate right $A \- B$-correspondence}.

\begin{defn}
Given unital C$^*$-algebras $A$ and $B$, let $\rCorr_{A \- B}$ be the category having non-degenerate right $A \- B$-correspondences as objects and adjointable $A \- B$-bilinear maps as morphisms. For $K_1,K_2 \in \rCorr_{A \- B}$, let $\rCorr_{A \- B}(K_1,K_2)$ denote the space of morphisms. Observe that for $K \in \rCorr_{\bbC \- A}$, $\rCorr_{\bbC \- A}(K) = \cL_A(K)$.
\end{defn}

\begin{defn}[\cite{MR2085108}]
An object $K \in \rCorr_{A\-A}$, the right $A$-valued inner product of which we denote by $\tensor[]{\langle \cdot , \cdot \rangle}{_A}$, is {\em bi-Hilbertian} if it admits a left $A$-valued inner product $\tensor[_A]{\langle \cdot , \cdot \rangle}{}$ such that the topology it induces on $K$ coincides with the topology induced by $\tensor[]{\langle \cdot , \cdot \rangle}{_A}$.
\end{defn}

\begin{defn}
A bi-Hilbertian right $A$-$A$- C$^*$-correspondence $K$ is said to be finitely generated projective if it is finitely generated projective as a right $A$-module. For a unital C$^*$-algebra $A$, we denote by $\fgpBim(A)$ the full subcategory of $\rCorr_{A\-A}$ consisting of finitely generated projective bi-Hilbertian correspondences $K \in \rCorr_{A\-A}$.
\end{defn}

The discussion above can be extended to define a 2-C$^*$-category $\rCorr$.  See \cite{2023arXiv230505072H}, Section 1, for more details.

\subsection{Unitary tensor categories}

A linear category $\cC$ is a locally small category such that for every pair $X,Y \in \cC$, the space $\cC(X,Y)$ of morphisms from $X$ to $Y$ is equipped with a structure of a vector space over $\bbC$. The model example is the category $\Mod(A)$ of left $A$-modules for an algebra $A$ over $\bbC$, where the morphisms between two modules are linear maps between them intertwining the corresponding actions.
A dagger, or {\em $*$-category}, is a linear category $\cC$ equipped with natural involutions $(-)^*: \cC(X,Y) \to \cC(Y,X)$. Naturality means that $(-)^*$ can be seen as a funtcor
\[ \cC \to \cC^{\op} , \]
acting as the identity function on the set of objects. In the category $\Hilb_{fd}$ of finite dimensional Hilbert spaces, for instance, the standard $*$-structure is given by the adjoint operation on the spaces of bounded linear maps.

\begin{defn}
Let $\cC$ be a $*$-category, and suppose that $\cC$ admits finite direct sums. Then $\cC$ is a C$^*$-category if $\cC(X) := \cC(X,X)$ is a C$^*$-algebra for every $X \in \cC$.
\label{def:Cstarcat}
\end{defn}

\begin{defn}
A C$^*$-tensor category is a C$^*$-category $\cC$ equipped with a tensor structure $\boxtimes: \cC \times \cC \to \cC$ such that
\[ ((-) \boxtimes (-))^* = (-)^* \boxtimes (-)^* \]
on morphisms.
\label{def:Cstartensorcat}
\end{defn}

We shall make use of the following terminology. For tensor categories $\cC_1$ and $\cC_2$, a tensor functor $F:\cC_1 \to \cC_2$ is understood to have natural isomorphisms as tensor structure. A lax functor $F:\cC_1 \to \cC_2$ is understood to be a functor equipped with associators and unitors, but they are not required to be natural isomorphisms.

\begin{defn}\label{def:unitarytensorfunctor}
    Let $\cC_1$ and $\cC_2$ be C$^*$-tensor categories. A tensor functor $F:\cC_1 \to \cC_2$ is a {\bf unitary tensor functor} if it is $*$-preserving and the tensor structure consist of unitaries. 
\end{defn}

We will assume throughout the paper all tensor categories of interest to have been strictified.

Examples of C$^*$-tensor categories include: the category $\Rep(\bbG)$ of unitary representations of a  locally compact (quantum) group $\bbG$, the category $\Bim(N)$ of bimodules over a von Neumann algebra $N$ and the category $\rCorr_{A\-A}$ of non-degenerate right C$^*$-correspondences over a C$^*$-algebra $A$. 

We will find latter in this paper unitary functors $\cC \to \rCorr_{A\-A}$ that are almost unitary tensor functors, Definition \ref{def:unitarytensorfunctor} being broken due to the associators not being adjointable morphisms. 

\begin{defn}
    A functor $F:\cC \to \rCorr_{A\-A}$ is a {\bf weak unitary tensor functor} if it is unitary and if it is equipped with isometries
    \[F^2 = \{ F^2_{X,Y}: F(X) \boxtimes F(Y) \to F(X \boxtimes Y) \ | \ X,Y \in \cC\}, \]
    not necessarily adjointable, satisfying pentagon coherence.
\end{defn}

\begin{rem}
    Weak unitary tensor functors are the key to Tannaka-Krein duality for compact quantum group actions \cite{MR3426224}. 
\end{rem}

We wish to introduce now the concept of duality in C$^*$-tensor categories. As a motivating example, let us fix a unital C$^*$-algebra $A$, and consider the subcategory $\fgpBim(A)$ of $\rCorr_{A\-A}$ consisting of bi-Hilbertian, finitely generated projective correspondences over $A$. If $K \in \fgpBim(A)$, it admits a right Pimsner-Popa basis, consisting of a finite set $\{\xi_i\}_i \subset K$ such that
\[ \eta = \sum_i \xi_i \triangleleft \tensor[]{\langle \eta,\xi_i \rangle}{_A}  , \ \forall \ \eta \in K . \]
In particular, $\{\xi_i\}$ is a projective basis for $K$ over $A$, so `$K$ has finite dimension' over $A$.  Because $K$ also has a left $A$-valued inner product, the conjugate space $\bar{K}$ can also be equipped with the structure of a right $A$-correspondence, and moreover $\bar{K} \in \fgpBim(A)$. Denoting by $\underset{A}{\boxtimes}$ the fusion of correspondences over $A$, it follows that
\[ \sum_i  \bar{\xi_i} \underset{A}{\boxtimes} \xi_i \in \bar{K} \underset{A}{\boxtimes} K \]
is an $A$-central vector, providing a canonical $A\-A$-bimodular linear map $R_K: A \to \bar{K} \underset{A}{\boxtimes} K$.

Given a tensor category $\cC$, we will denote a choice of tensor unit for $\cC$ by $\bbOne$
\begin{defn}
Let $\cC$ be a C$^*$-tensor category. An object $X \in \cC$ is {\em dualizable} if there exist: an object $\bar{X} \in \cC$, morphisms $R_X: \bbOne \to \bar{X} \boxtimes X$, $\bar{R}_X: \bbOne \to X \boxtimes \bar{X}$ such that
\begin{center}
\begin{tikzcd}
X \arrow[rr, "\id_X \boxtimes R_X"] \arrow[rd, "\id_X"'] &   & X \boxtimes\bar{X} \boxtimes X \arrow[ld, "\bar{R}_X^* \boxtimes \id_X"] &  & \bar{X} \arrow[rr, "\id_{\bar{X}} \boxtimes \bar{R}_X"] \arrow[rd, "\id_{\bar{X}}"'] &         & \bar{X} \boxtimes X \boxtimes \bar{X} \arrow[ld, "R_X^* \boxtimes \id_{\bar{X}}"] \\
                                                         & X &                                                                          &  &                                                                                      & \bar{X} &                                                                                  
\end{tikzcd}
\end{center}
commute.
\end{defn}

\begin{defn}
A C$^*$-tensor category $\cC$ is rigid if every object in $\cC$ is dualizable. 
\end{defn}

A pair $(R_X, \bar{R}_X)$ is said to be a {\em solution of the conjugate equations} for $X$.

\begin{defn}
Let $\cC$ be a rigid C$^*$-tensor category, and let $X$ be an object of $\cC$. The {\em intrinsic dimension} $d_X$ of $X$ in $\cC$ is defined by
\[ d_X := \underset{(R_X, \bar{R}_X)}{\min} \|R_X\| \cdot \| \bar{R}_X\| , \]
where  $(R_X, \bar{R}_X)$ runs through the solutions of the conjugate equations for $X$.
\end{defn}

The function defined on the set of objects of $\cC$ given by $X \mapsto d_X$ is a {\em dimension function}. In particular, $d_{\bbOne} = 1$, $d_{X \boxtimes Y} = d_X d_Y$ and $d_{X \oplus Y} = d_X + d_Y$.

\begin{defn}
A rigid C$^*$-tensor category is a {\em unitary tensor category}, short UTC, if its unit object $\bbOne$ is simple, meaning $\cC(\bbOne) \simeq \bbC$.
\end{defn}

\begin{rem}
It can be shown that, in a unitary tensor category, every morphism space is finite dimensional. In particular, the endomorphism algebras are finite dimensional C$^*$-algebras and are therefore semi-simple. This implies $\cC$ is semi-simple: a collection $\Irr(\cC) \subset \cC$ of objects can be chosen with the following properties:
\begin{itemize}
\item $\cC(X) \simeq \bbC$ for all $X$  in $\Irr(\cC)$ (we say that $X$ is irreducible, or simple);
\item for $X,Y \in \Irr(\cC)$, with $X \neq Y$, $\cC(X,Y) = 0$;
\item every object in $\cC$ is isomorphic to a finite direct sum of objects in $\Irr(\cC)$.
\end{itemize}
\end{rem}

\begin{ex}
Let $A$ be a unital C$^*$-algebra. Then the subcategory of dualizable objects in $\rCorr_{A\-A}$ is exaclty $\fgpBim(A)$. Thus the latter is a rigid C$^*$-tensor category. The tensor unit of this category is $A$ with its canonical structure of an $A\-A$-correspondence. We have that the endomorphism algebra of $A$ in $\fgpBim(A)$ is the center $\cZ(A)$ of $A$. It follows that $\fgpBim(A)$ is a unitary tensor category if and only if $\cZ(A) \simeq \bbC$. We shall write $\Irr(A):= \Irr(\fgpBim(A))$ for simplicity.
\end{ex}

\subsection{ind-completions and C*-algebra objects}
\label{subsec:indcomp}

The treatment of infinite index inclusions will require the use of objects that are not dualizable, but that can be decomposed into dualizable objects. We shall now introduce the formalism needed to describe this situation. The reader is refered to \cite{JP17} for more details.

\begin{defn}
Let $\cC$ be a unitary tensor category. The {\em algebraic ind-completion} $\Vec(\cC)$ of $\cC$ is the category of functors $\cC^{\op} \to \Vec$. Morphisms in this category are linear natural transformations.
\end{defn}

To specify an object $V \in \Vec(\cC)$, it suffices, up to isomorphism, to specify its values, or {\em fibers}, at the objects in $\Irr(\cC)$, i.e., a family $\{V(X)\}_{X \in \Irr(\cC)}$ of vector spaces indexed by $\Irr(\cC)$. Another useful perspective is that of understanding the object $V = \{V_X\}_{X \in \Irr(\cC)} \in \Vec(\cC)$ as a  formal direct sum
\[ V = \bigoplus_{X \in \Irr(\cC)} X \odot V(X) \]
of objects in $\cC$, where the irreducible object $X$ appears $\dim V(X)$ times as a direct summand. We say that $V(X)$ is the {\em multiplicity space} of $X$ in $V$. Moreover, given $V_1 = \{V_1(X)\}_X$ and $V_2=\{V_2(X)\}_X$ in $\Vec(\cC)$, it follows that
\[ \Vec(\cC)(V_1,W_1) \simeq \prod_{X \in \Irr(\cC)} \Hom_{\Vec}(V_1(X),V_2(X)) , \]
where $\Hom_\Vec(\cdot,\cdot)$ denotes the space of linear maps.

The category $\cC$ embedds into $\Vec(\cC)$ as a linear full subcategory, and the tensor structure $\boxtimes$ in $\cC$ can be extended to a tensor structure $\boxdot$ on $\Vec(\cC)$: for $V$ and $W$ as above,
\[ (V_1 \boxdot V_2)(X) : = \bigoplus_{Y,Z \in \Irr(\cC)} \cC(X, Y \boxtimes Z) \odot V_1(Y) \odot V_2(Z) . \]

\begin{defn}
The {\em unitary ind-completion} of a unitary tensor category $\cC$ is the category $\Hilb(\cC)$ with
\begin{itemize}
\item objects: functors $\cC^{\op} \to \Hilb$;
\item morphisms: uniformly bounded linear natural transformations. That is, given $H_1, H_2 \in \Hilb(\cC)$, a morphism $\eta \in \Hilb(\cC)(H_1,H_2)$ is a natural family $\{ \eta_X \in \cL(H_1(X),H_2(X))\}_{X \in \cC}$ such that
\[ \underset{X \in \cC}{\sup} \| \eta_X \| < \infty . \]
\end{itemize}
\end{defn}

An object $H \in \Hilb(\cC)$ is determined by a family $\{H(X)\}_{X \in \Irr(\cC)}$ of Hilbert spaces. We have
\[ \Hilb(\cC)(H_1,H_2) \simeq \prod_{X \in \Irr(\cC)}^{\ell^\infty} \cL(H_1(X),H_2(X)) . \]

The category $\Hilb(\cC)$ is a C$^*$-category (actually a W$^*$-category), and $\cC$ is a full C$^*$-subcategory of it. The tensor structure of $\cC$ extends to $\Hilb(\cC)$; seeing $\cC(X,Y  \boxtimes Z)$ as a Hilbert space,
\[ (H_1 \boxtimes H_2)(X) = \bigoplus_{Y,Z \in \Irr(\cC)}^{\ell^2} \cC(X, Y \boxtimes Z) \otimes H_1(Y) \otimes H_2(Z) , \]
where on the $(Y,Z)$-direct summand the inner-product is scaled by a factor of $(d_Y d_Z)^{-1}$. The purpose of this renormalization is to have a clean interpretation of the induced inner product by means of graphical calculus (see \cite{JP17}, Section 2).

\medskip

Since $\Vec(\cC)$ is a tensor category, we can consider algebra objects in it. An algebra object  in $\Vec(\cC)$ consists of a functor $\bbA: \cC^{\op} \to \Vec$ together with a lax-natural transformation 
\[\bbA^2 = \{ \bbA^2_{X,Y} : \bbA(X) \odot \bbA(Y) \to \bbA(X \boxtimes Y) \ | \ X,Y \in \cC\} ,\]
satisfying coherence with respect to the pentagon diagram associated to the orderings of triple tensor products.

\begin{defn}
A $*$-structure on an algebra object $\bbA$ is a conjugate linear natural transformation 
\[ j^\bbA = \{ j^\bbA_X: \bbA(X) \to \bbA(\bar{X})\}_{X \in \cC} , \]
satisfying
\begin{itemize}
\item involution: $j^\bbA_{\bar{X}} \circ j^\bbA_X = \id_{\bbA(X)}$, under the identification $\bar{\bar{X}} \simeq X$;
\item unitality: $j^\bbA_{1_\cC} = \id_{\bbA(1_\cC)}$, under the identification $\bar{\bbOne} \simeq \bbOne$;
\item monoidality: the diagram
\begin{center}
\begin{tikzcd}
\bbA(X) \odot \bbA(Y) \arrow[rr, "j^\bbA_X \odot j^\bbA_Y"] \arrow[dd, "\bbA^2"'] &  & \bbA(\bar{X}) \odot \bbA(\bar{Y}) \arrow[dd, "\bbA^2"]              \\
                                                                                  &  &                                                                     \\
\bbA(X \boxtimes Y) \arrow[rr, "j^\bbA_{X \boxtimes Y}"']                         &  & \bbA(\bar{X}\boxtimes \bar{Y}) \simeq \bbA(\overline{Y\boxtimes X})
\end{tikzcd}
\end{center}
commutes.
\end{itemize}
\end{defn}

If $\bbA = (\bbA, \bbA^2)$ is an algebra object in $\Vec(\cC)$, then the vector spaces $\bbA(\bar{X} \boxtimes X)$ become algebras, with multiplication given by
\[ \bbA(\bar{X} \boxtimes X) \odot \bbA(\bar{X} \boxtimes X) \overset{\bbA^2}{\to} \bbA( \bar{X} \boxtimes X \boxtimes \bar{X} \boxtimes X) \overset{\bbA(\id_{\bar{X}} {\boxtimes \bar{R}_X^* \boxtimes \id_X)}}{\to} \bbA(\bar{X} \boxtimes X) . \]
If $j^\bbA$ is a $*$-structure on $\bbA$, then $\bbA(\bar{X} \boxtimes X)$ becomes $*$-algebra, with involution given by $j^\bbA_{\bar{X} \boxtimes X}$.

\begin{defn}
Let $\bbA = (\bbA,\bbA^2, j^\bbA)$ be a $*$-algebra in $\Vec(\cC)$. We say that $\bbA$ is a C$^*$-algebra object if, for each object $X$, the $*$-algebra $\bbA(\bar{X} \boxtimes X)$ is a C$^*$-algebra.
\end{defn}

\begin{rem}
    C*-algebra objects can be equivalently defined in terms of C*-module categories as follows: 
    Given a $*$-algebra object $\bbA$ as above, the category $\cM_\bbA$ generated under finite direct sums and idempotent completion by objects of the form $\bbA \odot X$, with $X \in \cC$, and with morphisms 
    \[ \cM_\bbA(\bbA \odot X, \bbA \odot Y) := \Vec(\cC)(X, \bbA \odot Y) \simeq \bbA(X \otimes \bar{Y}) \]
    has a right $\cC$-module structure. $\bbA$ is a C$^*$-algebra object if and only if $\cM_\bbA$ is a $\cC$-module C$^*$-category \cite{JP17}. In particular, if $\bbA$ is a C$^*$-algebra object, $\bbA(X)$ has a Banach space structure for every $X \in \cC$.
\end{rem}

\begin{defn}[\cite{JP17}, Definition 4.20]
Let $\bbD_1$ and $\bbD_2$ be C$^*$-algebra objects in $\Vec(\cC)$.  A natural transformation $\theta: \bbD_1 \to \bbD_2$ is called a $*$-natural transformation if 
\[ j^{\bbD_2} \circ \theta = \theta \circ j^{\bbD_1} . \] 
A $*$-natural transformation $\theta: \bbD_1 \to \bbD_2$ is called a {\em ucp map} if, for all $X \in \cC$, $\theta: \bbD_1(\bar{X} \boxtimes X) \to \bbD_2(\bar{X} \boxtimes X)$ is a ucp map.
\label{defn:categoricalucpmaps}
\end{defn} 

\section{Reduced Realization of C*-algebra objects }
\label{sec:generalrealization}
In this section we introduce a coend-type construction for pairs of C*-algebra objects that is later shown to generalize crossed-products in the spirit of \cite{MR3948170, 2023arXiv230505072H}. 

\subsection{Intrinsic characterization of internal C*-algebra objects}

In this subsection we recast the results in \cite[Appendix A]{2022arXiv220506663H} and  \cite[\S4.2]{JP17}  concerning Pimsner-Popa inequaltities that arise naturally when considering C$^*$-algebra objects in UTC's. Later, these results will be our main tools for producing faithful conditional expectations.

Let $\cC$ be a UTC and $(\bbD, \bbD^2, j^\bbD)$ be a C*-algebra object in $\Vec(\cC)$. Recall in particular that each fiber $\bbD(X)$ has a canonical Banach space structure, the norm of which we denote simply by $\| \cdot \|$, and that $\bbD(\overline{X} \boxtimes X)$ is a C*-algebra, for all $X \in \cC$.

\begin{defn}\label{defi:groundC*algebra}
    Given a C*-algebra object $\bbD\in \Vec(\cC)$, we call $\bbD(\bbOne)$ the ground C*-algebra of $\bbD$.     
    Furthermore, for each $X \in \cC$, fix a standard solution to the conjugate equations $(R_X,\overline{R}_X)$ for the pair $(X, \overline{X})$. Denoting by $d_X$ the dimension of $X$ in $\cC$, we define the map
\[
    \bbE^\bbD_X := d_X^{- 1}  \bbD(R_X): \bbD(\overline{X} \boxtimes X)\to \bbD(\bbOne). 
\]
\end{defn}

\begin{lem}[\cite{JP17}, Corollary 4.2]
    The map $\bbE^\bbD_X: \bbD(\overline{X} \boxtimes X) \to \bbD(\bbOne)$ is a conditional expectation. For all $T \in \bbD(\overline{X} \boxtimes X)_+$ we have that 
    $$ \| \bbE^\bbD_X(T) \| \leq \|T\| \leq d_X^2 \| \bbE^\bbD_X(T) \| . $$
    That is, the conditional expectation $\bbE_X^\bbD$ has finite Pimsner-Popa index.
    \label{lemma:PPindexforcategoricalcondexp}
\end{lem}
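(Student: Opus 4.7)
The plan is to verify, in order, (a) that $\bbE^\bbD_X$ is a conditional expectation onto $\bbD(\bbOne)$, and (b) the two Pimsner-Popa inequalities, by unpacking the categorical structure of $\bbD$ via graphical calculus, as in \cite{JP17}. By functoriality, $\bbE^\bbD_X = d_X^{-1}\bbD(R_X)$ is $\bbC$-linear. First I would pin down the canonical inclusion $\iota:\bbD(\bbOne)\hookrightarrow \bbD(\overline{X}\boxtimes X)$ implemented by $\bbD^2$ together with a suitably normalized form of $R_X$, and check unitality by reading off $\bbE^\bbD_X(\iota(1))=d_X^{-1}\bbD(R_X\circ R_X^*)(1)=1$, where the scalar cancellation comes from the standard-solution identity $R_X^*\circ R_X=d_X\,\id_\bbOne$.

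Next, $\bbD(\bbOne)$-bimodularity of $\bbE^\bbD_X$ would follow from isotoping elements of $\iota(\bbD(\bbOne))$ across the closed $X$-loop representing $\bbD(R_X)$; this is a direct consequence of naturality of $\bbD^2$ combined with rigidity of $\cC$. Complete positivity is best checked through the right $\cC$-module C*-category $\cM_\bbD$ associated to $\bbD$: elements of $\bbD(\overline{X}\boxtimes X)_+$ can be written as $a^*a$ with $a$ a morphism in $\cM_\bbD$, and the standard conjugate-equation picture sends such elements to positive elements of $\bbD(\bbOne)$; the completely positive version follows by replacing $\cM_\bbD$ with its matrix amplifications. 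At this point $\bbE^\bbD_X$ is a ucp $\bbD(\bbOne)$-bimodular projection, and the left inequality $\|\bbE^\bbD_X(T)\|\leq\|T\|$ is immediate from contractivity.

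For the right inequality, the plan is to establish the operator bound
\[
T \;\leq\; d_X^2\cdot \iota\bigl(\bbE^\bbD_X(T)\bigr)\qquad\text{for all } T\in\bbD(\overline{X}\boxtimes X)_+,
\]
and then take norms. Diagrammatically, $\iota\circ\bbE^\bbD_X$ corresponds to cutting the $X$-strand of $T$ and reconnecting it via $d_X^{-1}R_XR_X^*$, so the inequality reduces to comparing $\id_{\overline{X}\boxtimes X}$ with a scaled version of $R_XR_X^*$, weighted against $T$; the factor $d_X^2$ emerges from two applications of $R_X^*\circ R_X=d_X\,\id_\bbOne$, one for each side. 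The main obstacle is carrying out this operator inequality rigorously inside the Banach algebra $\bbD(\overline{X}\boxtimes X)$, with no ambient von Neumann envelope: the verification must use only the coherence among $\bbD^2$, the $*$-structure $j^\bbD$, and the standard solution $(R_X,\overline{R}_X)$. Once the operator inequality is in place, the stated Pimsner-Popa index bound follows at once, certifying that $\bbE^\bbD_X$ has finite index $d_X^2$.
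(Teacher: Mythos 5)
The paper does not prove this lemma: it is imported verbatim from \cite{JP17} (Corollary 4.2), so there is no in-paper argument to compare against. Your plan is sound and reproduces the standard argument from that reference: ucp-ness and bimodularity of $\bbE^\bbD_X$ give the left inequality, and the right inequality follows from the operator bound $T \leq d_X^2\,\iota(\bbE^\bbD_X(T))$, which one obtains by writing $T = S^*S$ in the C*-category $\cM_\bbD$, factoring $S = V^*W$ where $W$ is built from $S$ and one leg of the conjugate equations and $V$ is (an amplification of) $R_X$, and then using $VV^* \leq \|R_X\|^2\,\id = d_X\,\id$. Two small corrections to your sketch. First, since $\bbD$ is contravariant, the unitality computation should read $\bbE^\bbD_X\circ\iota = d_X^{-1}\,\bbD(R_X)\circ\bbD(R_X^*) = d_X^{-1}\,\bbD(R_X^*\circ R_X) = \id_{\bbD(\bbOne)}$; your expression $\bbD(R_X\circ R_X^*)$ composes the morphisms in the wrong order (that composite is an endomorphism of $\overline{X}\boxtimes X$, not of $\bbOne$). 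Second, the ``main obstacle'' you identify --- carrying out the operator inequality with no ambient von Neumann envelope --- is not actually an obstacle: by definition of a C*-algebra object, $\cM_\bbD$ is a C*-category, so $d_X\,\id - VV^*$ is positive in the C*-algebra of endomorphisms of the relevant object and $W^*(d_X\,\id - VV^*)W \geq 0$ is immediate; no completion or enveloping algebra is needed.
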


Lemma \ref{lemma:PPindexforcategoricalcondexp} implies  that $\bbE_X^\bbD$ is a faithful conditional expectation. Let $X \in \cC$. Then
\begin{align*}
    \bbD^2_{X, \bbOne} &: \bbD(X) \odot \bbD(\bbOne) \to \bbD(X \boxtimes \bbOne) \simeq \bbD(X) \\
    \bbD^2_{\bbOne,X} &: \bbD(\bbOne) \odot \bbD(X) \to \bbD(\bbOne \boxtimes X) \simeq \bbD(X)
\end{align*}
gives to $\bbD(X)$ the structure of a $\bbD(\bbOne)$-bimodule. We claim that this structure can be further enhanced to that of a right C*-correspondence, allowing us to see $\bbD(X)$ as an object in $\rCorr_{\bbD(\bbOne) \- \bbD(\bbOne)}$.

For a proof of the following lemma, see \cite[\S4.2]{JP17}.

\begin{lem}
    For each $X \in \cC$, the composition
    \begin{center}
        \begin{tikzcd}
\bbD(X) \odot \bbD(X) \arrow[rr, "j^\bbD_X \odot \id"] &  & \bbD(\overline{X}) \odot \bbD(X) \arrow[rr, "\bbD^2_{\overline{X},X}"] &  & \bbD(\overline{X} \boxtimes X) \arrow[rr, "\bbE^\bbD_X"] &  & \bbD(\bbOne)
\end{tikzcd}
    \end{center}
    is an $\bbD(\bbOne)$ valued inner product on $\bbD(X)$. For $\xi,\eta \in \bbD(X)$, we shall write
    \[ \langle \xi, \eta \rangle_{\bbD(\bbOne)} : = \bbE^\bbD_X \left( \bbD^2_{\overline{X},X}(j^\bbD(\xi) \odot \eta ) \right) . \]
\end{lem}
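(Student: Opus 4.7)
The plan is to verify, in turn, the axioms of a $\bbD(\bbOne)$-valued inner product: sesquilinearity (conjugate-linear in the first slot, linear in the second), right $\bbD(\bbOne)$-linearity, Hermitian symmetry, positivity, and non-degeneracy. The first three are essentially formal consequences of the $*$-algebra object structure on $\bbD$, while the crux is positivity, which reduces to the C*-algebra structure on $\bbD(\overline{X} \boxtimes X)$ together with the positivity of $\bbE^\bbD_X$ established in Lemma \ref{lemma:PPindexforcategoricalcondexp}.

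For sesquilinearity it suffices to note that $j^\bbD_X$ is conjugate linear while $\bbD^2_{\overline{X},X}$ and $\bbE^\bbD_X$ are linear. For right $\bbD(\bbOne)$-linearity, associativity of $\bbD^2$ lets me rewrite $\bbD^2_{\overline{X},X}(j^\bbD_X(\xi) \odot (\eta \triangleleft a))$ as $\bbD^2_{\overline{X} \boxtimes X,\bbOne}(\bbD^2_{\overline{X},X}(j^\bbD_X(\xi) \odot \eta) \odot a)$; naturality of $\bbD$ applied to $R_X: \bbOne \to \overline{X} \boxtimes X$ then shows $\bbE^\bbD_X$ is $\bbD(\bbOne)$-bimodular, yielding $\langle \xi, \eta \triangleleft a\rangle_{\bbD(\bbOne)} = \langle \xi, \eta\rangle_{\bbD(\bbOne)} \cdot a$. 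For Hermitian symmetry, I combine the monoidality and involutivity of $j^\bbD$ with the fact that $j^\bbD_\bbOne = \id$ coincides with the C*-algebra involution on $\bbD(\bbOne)$ to obtain the identity $j^\bbD_{\overline{X} \boxtimes X}(\bbD^2_{\overline{X},X}(j^\bbD_X(\xi) \odot \eta)) = \bbD^2_{\overline{X},X}(j^\bbD_X(\eta) \odot \xi)$ in the C*-algebra $\bbD(\overline{X} \boxtimes X)$; adjoint-preservation of the conditional expectation $\bbE^\bbD_X$ then delivers $\langle \xi, \eta\rangle_{\bbD(\bbOne)}^{*} = \langle \eta, \xi\rangle_{\bbD(\bbOne)}$.

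The substantive step is positivity: I would show that $\bbD^2_{\overline{X},X}(j^\bbD_X(\xi) \odot \xi)$ is a positive element of the C*-algebra $\bbD(\overline{X} \boxtimes X)$ by invoking the C*-module category description in the remark after the definition of a C*-algebra object. Indeed, $\xi \in \bbD(X)$ corresponds to a morphism in $\cM_\bbD$ between objects of the form $\bbD \odot \bbOne$ and $\bbD \odot X$; its adjoint in the dagger structure of $\cM_\bbD$ corresponds to $j^\bbD_X(\xi) \in \bbD(\overline{X})$; and the product $\bbD^2_{\overline{X},X}(j^\bbD_X(\xi) \odot \xi)$ realizes the composition $\xi^* \circ \xi$ in an endomorphism C*-algebra of $\cM_\bbD$, hence is positive. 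Composing with the positive map $\bbE^\bbD_X$ then gives $\langle \xi, \xi\rangle_{\bbD(\bbOne)} \geq 0$. Non-degeneracy is then immediate from the Pimsner-Popa inequality in Lemma \ref{lemma:PPindexforcategoricalcondexp}: if $\bbE^\bbD_X(\bbD^2_{\overline{X},X}(j^\bbD_X(\xi) \odot \xi)) = 0$, then $\|\bbD^2_{\overline{X},X}(j^\bbD_X(\xi) \odot \xi)\| \leq d_X^2 \|\bbE^\bbD_X(\bbD^2_{\overline{X},X}(j^\bbD_X(\xi) \odot \xi))\| = 0$, so $\xi^* \circ \xi = 0$ in $\cM_\bbD$ forces $\xi = 0$ by the C*-identity.

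I expect the main technical obstacle to be tracking the standard-solution conventions together with the left/right dual identifications while verifying Hermitian symmetry, and similarly pinning down the module-category identification that is used to trade the abstract product $\bbD^2_{\overline{X},X}(j^\bbD_X(\xi) \odot \xi)$ for the concrete composition $\xi^* \circ \xi$ underlying the positivity argument.
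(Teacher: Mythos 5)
Your proof is correct. The paper does not actually prove this lemma---it defers entirely to the citation \cite[\S4.2]{JP17}---so there is no internal argument to compare against; your verification of sesquilinearity, $\bbD(\bbOne)$-linearity, symmetry, positivity via the identification of $\bbD^2_{\overline{X},X}(j^\bbD_X(\xi)\odot\xi)$ with $\Psi(\xi)^*\circ\Psi(\xi)$ in the C*-module category $\cM_\bbD$, and non-degeneracy via the Pimsner--Popa bound of Lemma \ref{lemma:PPindexforcategoricalcondexp} is exactly the standard argument, and it uses the same $\cM_\bbD$-identification that the paper itself invokes in the proof of the immediately following lemma.
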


For $\xi \in \bbD(X)$, let us write $\| \xi \|_{\bbD(\bbOne)}$ for the norm of $\xi$ in $ \tensor[_{\bbD(\bbOne)}]{\bbD(X)}{_{\bbD(\bbOne)}}$, that is, the norm coming from the $\bbD(\bbOne)$-valued inner product.  Note that, for $\xi \in \bbD(\bbOne)$, $\| \xi \| = \|\xi \|_{\bbD(\bbOne)}$. 

\begin{lem}
    For every $\xi \in \bbD(X)$,
\[ \| \xi \|^2 = \| \bbD^2_{\overline{X},X} (j^\bbD_X(\xi) \odot \xi) \| . \]
\end{lem}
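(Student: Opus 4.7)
The plan is to obtain the equality from the C*-identity in the module C*-category $\cM_\bbD$. Recall from the remark just before the statement that $\bbD(X)$ inherits its Banach space norm precisely by being identified with a morphism space in the C*-category $\cM_\bbD$: up to side conventions, $\bbD(X) \simeq \cM_\bbD(\bbD \odot \overline{X}, \bbD)$. Under this identification, the conjugate-linear map $j^\bbD_X$ implements the categorical adjoint, so that $j^\bbD_X(\xi) \in \bbD(\overline{X})$ corresponds to a morphism $\xi^*: \bbD \to \bbD \odot \overline{X}$, while the lax structure $\bbD^2_{\overline{X}, X}$, composed with the canonical identification $\End_{\cM_\bbD}(\bbD \odot \overline{X}) \simeq \bbD(\overline{X} \boxtimes X)$, implements categorical composition.

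Granted this dictionary, the element $\bbD^2_{\overline{X}, X}(j^\bbD_X(\xi) \odot \xi) \in \bbD(\overline{X} \boxtimes X)$ is precisely $\xi^* \circ \xi$ read back in the endomorphism C*-algebra, and so the claimed equality
\[ \| \xi \|^2 = \| \bbD^2_{\overline{X},X} (j^\bbD_X(\xi) \odot \xi) \| \]
reduces to the C*-identity $\|\xi\|^2 = \|\xi^* \xi\|$, which holds in any C*-category.

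The only real content lies in justifying the dictionary between the data $(\bbD, \bbD^2, j^\bbD)$ and the C*-category structure on $\cM_\bbD$, in particular keeping track of which side $\overline{X}$ must appear on so that composition of $\xi^*$ with $\xi$ is implemented by $\bbD^2_{\overline{X}, X}$ rather than by $\bbD^2_{X, \overline{X}}$. I expect this bookkeeping to be the main, though routine, obstacle; it is carried out in detail in the framework of \cite{JP17}, Section 4, and can be invoked here directly. An alternative, more hands-on route would use the Pimsner--Popa bound of Lemma \ref{lemma:PPindexforcategoricalcondexp} together with a GNS-type representation of the C*-algebra $\bbD(\overline{X} \boxtimes X)$ to identify the operator norm of $\bbD^2_{\overline{X},X}(j^\bbD_X(\xi) \odot \xi)$ with the supremum over states of $\|\xi\|_{\bbD(\bbOne)}^2$ in suitable induced representations, recovering $\|\xi\|^2$ intrinsically; but the module-category argument is cleaner and subsumes this.
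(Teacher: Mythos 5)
Your proposal is correct and follows essentially the same route as the paper: the paper's proof identifies $\bbD(X)$ with the morphism space $\cM_\bbD(\bbD \odot X, \bbD)$, observes that $\bbD^2_{\overline{X},X}(j^\bbD_X(\xi)\odot\xi)$ is $\Psi(\xi)^*\circ\Psi(\xi)$ under this dictionary, and concludes by the C*-identity in the C*-category $\cM_\bbD$. The side/variance bookkeeping you flag is indeed the only point of care, and the paper likewise delegates it to the \cite{JP17} framework.
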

\begin{proof}
    In terms of the $\cC$-module C$^*$-category $\cM_\bbD$ associated to $\bbD$ and the identification $\Psi: \bbD(X) \simeq \cM_\bbD(\bbD \odot X, \bbD)$, the right hand side is just the norm of $\Psi(\xi)^* \circ \Psi(\xi)$.  Since $\cM_\bbD$ is a C*-category,
    \[ \| \Psi(\xi)^* \circ \Psi(\xi) \| = \| \Psi(\xi) \|^2 = \|\xi \|^2. \]
\end{proof}

From Proposition 4.16 in \cite{JP17}, it follows that the norms $\| \cdot \|$ and $\| \cdot \|_{\bbD(\bbOne)}$ in the vector space $\bbD(X)$ are equivalent. The proof of this statement will be important for the coend realization of Section 3, given us the motive to write it down here.

\begin{cor}
    Let $\cC$ be a UTC, $\bbD$ a C*-algebra object in $\Vec(\cC)$, and let $X \in \cC$. For every $\xi \in \bbD(X)$, it holds 
    \[ \| \xi \|_{\bbD(\bbOne)} \leq \| \xi \| \leq d_X \| \xi \|_{\bbD(\bbOne)} . \]
    In particular, $\bbD(X)$ is complete with respect to the $\bbD(\bbOne)$-Hilbert C$^*$-module norm, i.e., $\bbD(X) \in \rCorr_{\bbD(\bbOne) \- \bbD(\bbOne)}$.
\end{cor}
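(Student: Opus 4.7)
The plan is to deduce both inequalities from the Pimsner-Popa bound of Lemma \ref{lemma:PPindexforcategoricalcondexp} by writing the two norms squared as the C*-norms of a single positive element and of its image under $\bbE_X^\bbD$. Concretely, I would set $T := \bbD^2_{\overline{X},X}(j^\bbD_X(\xi)\odot \xi) \in \bbD(\overline{X}\boxtimes X)$ and observe that $T \geq 0$: under the identification $\Psi:\bbD(X)\simeq \cM_\bbD(\bbD\odot X,\bbD)$ used in the proof of the previous lemma, $T$ corresponds to $\Psi(\xi)^*\circ \Psi(\xi)$, which is positive in the C*-algebra $\cM_\bbD(\bbD\odot X)$.

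Then the previous lemma gives $\|\xi\|^2 = \|T\|$, while unwinding the definition of $\langle\cdot,\cdot\rangle_{\bbD(\bbOne)}$ yields
\[ \|\xi\|_{\bbD(\bbOne)}^2 = \|\langle \xi,\xi\rangle_{\bbD(\bbOne)}\| = \|\bbE_X^\bbD(T)\|. \]
Applying Lemma \ref{lemma:PPindexforcategoricalcondexp} to the positive element $T$ gives $\|\bbE_X^\bbD(T)\| \leq \|T\| \leq d_X^2 \|\bbE_X^\bbD(T)\|$, and taking square roots delivers the desired bound.

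For the concluding assertion that $\bbD(X)\in \rCorr_{\bbD(\bbOne)-\bbD(\bbOne)}$, I would argue as follows. The space $\bbD(X)$ is already complete in $\|\cdot\|$, since it is a hom space in the C*-category $\cM_\bbD$ (or, equivalently, a fiber of the C*-algebra object $\bbD$); by the norm equivalence just established, it is also complete in $\|\cdot\|_{\bbD(\bbOne)}$, and therefore is a right Hilbert $\bbD(\bbOne)$-module under the action $\bbD^2_{X,\bbOne}$. For the left action $\bbD^2_{\bbOne,X}$, a short diagram chase using the monoidality of $j^\bbD$ and the associativity of $\bbD^2$ shows that for $a\in \bbD(\bbOne)$ and $\xi,\eta\in \bbD(X)$ one has $\langle a\cdot\xi,\eta\rangle_{\bbD(\bbOne)} = \langle \xi, a^*\cdot \eta\rangle_{\bbD(\bbOne)}$, so left multiplication by $a$ is adjointable with adjoint left multiplication by $a^*$, giving the required $*$-homomorphism $\bbD(\bbOne)\to \cL_{\bbD(\bbOne)}(\bbD(X))$.

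The analytic content is entirely absorbed by Lemma \ref{lemma:PPindexforcategoricalcondexp}; the only mildly delicate step is the identification of $T$ as positive via the module category picture, together with the bookkeeping of the standard solution $(R_X,\overline{R}_X)$ and the canonical identification $\overline{\overline{X}}\simeq X$ implicit in $j^\bbD$. Those are routine but must be done carefully, since the same identifications will reappear in the coend construction of Section \ref{sec:generalrealization}.
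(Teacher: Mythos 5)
Your proof is correct and follows essentially the same route as the paper: both apply the Pimsner--Popa bound of Lemma \ref{lemma:PPindexforcategoricalcondexp} to the element $T=\bbD^2_{\overline{X},X}(j^\bbD_X(\xi)\odot\xi)$ and combine it with the identity $\|\xi\|^2=\|T\|$ from the preceding lemma. Your explicit verification that $T$ is positive (as $\Psi(\xi)^*\circ\Psi(\xi)$ in $\cM_\bbD$) and your discussion of the adjointability of the left $\bbD(\bbOne)$-action are details the paper leaves implicit, but they do not alter the argument.
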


\begin{proof}
First, observe that
\begin{align*}
    \| \xi \|_{\bbD(\bbOne)}^2 & = \| \bbE^\bbD_X \left( \bbD^2_{\overline{X},X}(j^\bbD_X(\xi) \odot \xi ) \right) \| \\
    & \leq \| \bbD^2_{\overline{X},X}(j^\bbD_X(\xi) \odot \xi ) \| \\
    & = \| \xi \|^2 .
\end{align*}
The last equality follows from the previous lemma. On the other hand, by Lemma \ref{lemma:PPindexforcategoricalcondexp}, we have
\begin{align*}
    \| \xi \|^2 & = \| \bbD^2_{\overline{X},X}(j^\bbD_X(\xi) \odot \xi ) \| \leq d_X^2 \| \bbE_X ( \bbD^2_{\overline{X},X}(j^\bbD_X(\xi) \odot \xi )) \| = d_X^2 \|\xi\|_{\bbD(\bbOne)}^2 .
\end{align*}

\end{proof}

Thus, if $\bbD$ is a C*-algebra object in a UTC $\cC$, then $\bbD$ has the structure of a lax tensor functor $\bbD: \cC^{\op} \to \rCorr_{\bbD(\bbOne)\-\bbD(\bbOne)}$. It can be deduced moreover that the components of the lax structure $\bbD^2$ are isometries, possibly non-adjointable, i.e., $\bbD^2$ is a  {\em weak unitary tensor functor} as introduced in the previous Section.

\begin{lem}
    Let $\bbD$ be a C$^*$-algebra object in $\Vec(\cC)$. Then $\bbD^2$ induces isometric linear maps 
    \[ \bbD(X) \underset{\bbD(\bbOne)}{\boxtimes} \bbD(Y) \to \bbD(X \boxtimes Y) \]
    for $X,Y \in \cC$.
\end{lem}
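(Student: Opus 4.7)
The plan is to verify two things: that $\bbD^2_{X,Y}: \bbD(X) \odot \bbD(Y) \to \bbD(X \boxtimes Y)$ is $\bbD(\bbOne)$-middle linear (so it descends to the balanced tensor product) and that the descended map is isometric with respect to the $\bbD(\bbOne)$-valued inner products constructed in the previous lemma.

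First I would verify middle $\bbD(\bbOne)$-linearity. The right action of $a \in \bbD(\bbOne)$ on $\bbD(X)$ is given by $\xi \triangleleft a = \bbD^2_{X,\bbOne}(\xi \odot a)$, and the left action on $\bbD(Y)$ is $a \triangleright \eta = \bbD^2_{\bbOne,Y}(a \odot \eta)$. Plugging these into $\bbD^2_{X,Y}$ and using lax associativity (pentagon coherence) of $\bbD^2$ applied to the triple $(X,\bbOne,Y)$, together with the unitor identifications $X \boxtimes \bbOne \boxtimes Y \simeq X \boxtimes Y$, both expressions $\bbD^2_{X,Y}(\xi \triangleleft a \odot \eta)$ and $\bbD^2_{X,Y}(\xi \odot a \triangleright \eta)$ collapse to the same element $\bbD^3(\xi \odot a \odot \eta) \in \bbD(X \boxtimes Y)$. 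Thus $\bbD^2_{X,Y}$ descends to a linear map $\widetilde{\bbD^2}_{X,Y}: \bbD(X) \underset{\bbD(\bbOne)}{\boxtimes} \bbD(Y) \to \bbD(X \boxtimes Y)$.

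For the isometry, I would show $\langle \widetilde{\bbD^2}(\xi_1 \otimes \eta_1), \widetilde{\bbD^2}(\xi_2 \otimes \eta_2)\rangle_{\bbD(\bbOne)} = \langle \eta_1, \langle \xi_1,\xi_2\rangle_{\bbD(\bbOne)} \triangleright \eta_2 \rangle_{\bbD(\bbOne)}$. Unfolding the left-hand side with the previous lemma and using the $*$-structure axiom $j^\bbD_{X \boxtimes Y}(\bbD^2(\xi_1 \odot \eta_1)) = \bbD^2(j^\bbD_Y(\eta_1) \odot j^\bbD_X(\xi_1))$ (under $\overline{X \boxtimes Y} \simeq \overline{Y} \boxtimes \overline{X}$), together with lax associativity, gives
\[
\langle \widetilde{\bbD^2}(\xi_1 \otimes \eta_1), \widetilde{\bbD^2}(\xi_2 \otimes \eta_2)\rangle_{\bbD(\bbOne)} = \tfrac{1}{d_X d_Y}\bbD(R_{X\boxtimes Y})\bigl(\bbD^4\bigl(j^\bbD_Y(\eta_1) \odot j^\bbD_X(\xi_1) \odot \xi_2 \odot \eta_2\bigr)\bigr).
\]
Now I would use two standard facts: multiplicativity of the categorical dimension, $d_{X \boxtimes Y} = d_X d_Y$, and the factorization of the standard solution, $R_{X \boxtimes Y} = (1_{\overline{Y}} \boxtimes R_X \boxtimes 1_Y) \circ R_Y$. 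Applying $\bbD$ (contravariantly) and pushing $\bbD(1_{\overline{Y}} \boxtimes R_X \boxtimes 1_Y)$ through the lax natural transformation $\bbD^2$ contracts precisely the middle $\overline{X} \boxtimes X$ pair, yielding $d_X^{-1}\,\bbD^2(j^\bbD_Y(\eta_1) \odot \bbE^\bbD_X(\bbD^2(j^\bbD_X(\xi_1) \odot \xi_2)) \triangleright \eta_2)$, which by definition equals $\bbD^2(j^\bbD_Y(\eta_1) \odot \langle \xi_1,\xi_2\rangle_{\bbD(\bbOne)} \triangleright \eta_2)$. Then applying $d_Y^{-1}\bbD(R_Y)$ gives exactly $\langle \eta_1, \langle \xi_1,\xi_2\rangle_{\bbD(\bbOne)} \triangleright \eta_2\rangle_{\bbD(\bbOne)}$, as desired.

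The main obstacle is the bookkeeping of the iterated lax associators and ensuring that the lax natural transformation property of $\bbD^2$ is invoked correctly when sliding morphisms like $R_X$ past tensor products. This becomes cleanest in the string-diagram calculus for C*-algebra objects in unitary module categories developed in \cite{JP17}, where the identity in the third paragraph is literally the factorization of a cap into two nested caps. Isometry of $\widetilde{\bbD^2}$ on elementary tensors then extends to the completed balanced tensor product by continuity.
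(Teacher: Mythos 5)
Your argument is correct in substance, but it takes a genuinely different route from the paper. The paper's proof is a three-line computation inside the right $\cC$-module C*-category $\cM_\bbD$: under the identification $\bbD(X) \simeq \cM_\bbD(\bbD \odot X, \bbD)$, the multiplication $\bbD^2_{X,Y}(f \odot g)$ is literally the composite $g \circ (f \odot \id_Y)$, and the isometry identity $\langle \bbD^2(f \odot g), \bbD^2(f\odot g)\rangle_{\bbD(\bbOne)} = \langle g, \langle f,f\rangle_{\bbD(\bbOne)} \triangleright g\rangle_{\bbD(\bbOne)}$ drops out of $g(f\odot \id_Y)(f^*\odot\id_Y)g^*$ being computed in a C*-category; all the associator, conjugate, and dimension bookkeeping is absorbed into the construction of $\cM_\bbD$. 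You instead compute directly with the lax structure, the antimultiplicativity of $j^\bbD$, and the factorization $R_{X\boxtimes Y} = (1_{\overline{Y}} \boxtimes R_X \boxtimes 1_Y)\circ R_Y$ together with $d_{X\boxtimes Y} = d_X d_Y$; this is self-contained and makes visible exactly why the normalization $\bbE^\bbD_Z = d_Z^{-1}\bbD(R_Z)$ is the right one, at the cost of the heavier bookkeeping you acknowledge. Two small points to tidy: (i) in your penultimate display the prefactor is off --- starting from $(d_X d_Y)^{-1}\bbD(R_{X\boxtimes Y})(\cdots)$ and applying $\bbD(1_{\overline{Y}}\boxtimes R_X\boxtimes 1_Y)$ produces $d_X \cdot \bbE^\bbD_X = \bbD(R_X)$ in the middle, so the intermediate expression should read $d_Y^{-1}\,\bbD(R_Y)\bigl(\bbD^2(j^\bbD_Y(\eta_1)\odot\langle\xi_1,\xi_2\rangle_{\bbD(\bbOne)}\triangleright\eta_2)\bigr)$ with no residual $d_X^{-1}$; the end result is unaffected; (ii) the fixed standard solution for $X\boxtimes Y$ need not literally be the nested one, but since $\bbE^\bbD_Z$ is independent of the choice of standard solution this is harmless and worth one sentence.
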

\begin{proof}
    Fix $X,Y \in \cC$. It follows from the axioms of $\bbD^2$ that $\bbD^2_{X,Y}: \bbD(X) \odot \bbD(Y)$ is $\bbD(\bbOne)$-balanced. Using the right $\cC$-module C$^*$-category $\cM_\bbD$ associated to $\bbD$, given $f \in \cM_\bbD(\bbD \odot X, \bbD)$ and $g \in \cM_{\bbD}(\bbD \odot Y, \bbD)$, we have
    \[ \bbD^2_{X,Y}( f \odot g) = g  (f \odot \id_Y): \bbD \odot (X \boxtimes Y) \simeq (\bbD \odot X) \odot Y \to \bbD , \]
    and that
    \begin{align*}
        \langle \bbD^2_{X,Y}( f \odot g), \bbD^2_{X,Y}( f \odot g) \rangle_{\bbD(\bbOne)} &= g  (f \odot\id_Y) (f^* \odot \id_Y) g^* \\
        & = \langle g , \langle f,f \rangle_{\bbD(\bbOne)} \triangleright g \rangle_{\bbD(\bbOne)} \\ 
        & = \langle f \boxtimes g, f \boxtimes g \rangle_{\bbD(\bbOne)} . 
    \end{align*}
\end{proof}

We summarize our conclusions in the following Corollary.

\begin{cor}
    Let $\cC$ be a UTC and let $\bbD$ be a C$^*$-algebra object in $\Vec(\cC)$. Then the functor $\bbD: \cC^{\op} \to \Vec$ can be endowed with a canonical structure of a weak unitary tensor functor from $\cC^{\op}$ to $\rCorr_{\bbD(\bbOne) \- \bbD(\bbOne)}$.
    \label{cor:intrinsiccharC*algobjs}
\end{cor}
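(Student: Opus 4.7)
The plan is to assemble the previous lemmas into the required weak unitary tensor functor structure, checking each axiom in turn. First I would use the $\bbD(\bbOne)$-valued inner product $\langle \xi, \eta \rangle_{\bbD(\bbOne)} = \bbE^\bbD_X(\bbD^2_{\overline{X},X}(j^\bbD_X(\xi) \odot \eta))$ on $\bbD(X)$, together with the norm equivalence $\| \xi \|_{\bbD(\bbOne)} \leq \| \xi \| \leq d_X \| \xi \|_{\bbD(\bbOne)}$ established in the previous Corollary, to conclude that each $\bbD(X)$ is a complete right $\bbD(\bbOne)$-Hilbert C*-module. Combined with the left $\bbD(\bbOne)$-action coming from $\bbD^2_{\bbOne,X}$, this promotes $\bbD(X)$ to an object of $\rCorr_{\bbD(\bbOne)\-\bbD(\bbOne)}$. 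Non-degeneracy of both actions is immediate from the unitors of the lax structure.

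Next I would check that $\bbD$ is functorial into $\rCorr_{\bbD(\bbOne)\-\bbD(\bbOne)}$. For every morphism $f \in \cC(X,Y)$, the naturality of $\bbD^2$ in both slots shows that $\bbD(f): \bbD(Y) \to \bbD(X)$ is $\bbD(\bbOne)$-bilinear. Adjointability amounts to showing that $\bbD(f^*)$ is the adjoint of $\bbD(f)$ with respect to the $\bbD(\bbOne)$-valued inner product; this follows by translating everything through the identification $\Psi: \bbD(X) \simeq \cM_\bbD(\bbD \odot X, \bbD)$, under which the inner product becomes the composition pairing in the C*-module category $\cM_\bbD$, and using that $\Psi$ is $*$-preserving and functorial. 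The unitarity of $\bbD$ as a $*$-functor is then automatic from the $*$-structure $j^\bbD$.

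To obtain the weak unitary tensor structure, I would invoke the previous Lemma, which produces isometries $\bbD^2_{X,Y}: \bbD(X) \underset{\bbD(\bbOne)}{\boxtimes} \bbD(Y) \to \bbD(X \boxtimes Y)$. These are $\bbD(\bbOne)$-bimodular since the original $\bbD^2$ is a lax-natural transformation, and the pentagon coherence for $\bbD^2$ as a functor into $\rCorr_{\bbD(\bbOne)\-\bbD(\bbOne)}$ descends directly from the pentagon coherence of $\bbD^2$ as an algebra object in $\Vec(\cC)$, since it is expressed by the same commutative diagram only now read in the subcategory $\rCorr_{\bbD(\bbOne)\-\bbD(\bbOne)}$ of $\Vec$.

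The main obstacle I anticipate is the adjointability of $\bbD(f)$ for general $f \in \cC(X,Y)$: the previous lemmas treat only the special morphisms $R_X$, $\overline{R}_X$ and multiplication maps, so the identification of $\bbD(f^*)$ with the Hilbert C*-module adjoint of $\bbD(f)$ must be done cleanly, and the natural route is via the module C*-category $\cM_\bbD$ as indicated above. Everything else is a matter of transporting the coherence data along this identification.
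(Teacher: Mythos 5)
Your proposal matches the paper's treatment: the corollary is presented there purely as a summary of the preceding lemmas (the $\bbD(\bbOne)$-valued inner product, the norm equivalence $\|\xi\|_{\bbD(\bbOne)} \leq \|\xi\| \leq d_X \|\xi\|_{\bbD(\bbOne)}$ giving completeness of each fiber, and the isometric multiplication maps $\bbD(X) \underset{\bbD(\bbOne)}{\boxtimes} \bbD(Y) \to \bbD(X\boxtimes Y)$ with pentagon coherence inherited from the algebra structure), with no separate argument given. Your extra care over the adjointability of $\bbD(f)$ for a general morphism $f \in \cC(X,Y)$ --- needed for the functor to be unitary into $\rCorr_{\bbD(\bbOne)\-\bbD(\bbOne)}$ and resolved correctly via the module C$^*$-category $\cM_\bbD$ --- addresses a point the paper leaves implicit.
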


To finish the preliminary section, we have one final observation. This appears in Appendix A of \cite{2022arXiv220506663H}.
\begin{lem}
    If $\bbD$ is a C*-algebra object as above, for each $X \in \cC$, $\bbD(X)$ has a canonical structure of right Hilbert $\bbD(\overline{X} \boxtimes X)$-module.
    \label{lem:canonicalD(barXX)-modulestructure}
\end{lem}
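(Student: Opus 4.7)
The plan is to upgrade the canonical right Hilbert $\bbD(\bbOne)$-module structure on $\bbD(X)$ already established in Corollary~\ref{cor:intrinsiccharC*algobjs} to a right Hilbert $\bbD(\bar{X} \boxtimes X)$-module structure, by introducing the obvious multiplicative action and inner product, and then transferring positivity and completeness from the ambient $\cC$-module C*-category $\cM_\bbD$.

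First I would define the right action of $\bbD(\bar{X} \boxtimes X)$ on $\bbD(X)$ as the composition
\[
\bbD(X) \odot \bbD(\bar{X} \boxtimes X) \xrightarrow{\bbD^2} \bbD(X \boxtimes \bar{X} \boxtimes X) \xrightarrow{\bbD(\bar{R}_X^* \boxtimes \id_X)} \bbD(X),
\]
mirroring the way the multiplication on $\bbD(\bar{X} \boxtimes X)$ itself was defined earlier in the excerpt.  Dually, I would define the $\bbD(\bar{X} \boxtimes X)$-valued sesquilinear pairing by
\[
\langle \xi, \eta \rangle_{\bbD(\bar{X} \boxtimes X)} := \bbD^2_{\bar{X}, X}(j^\bbD_X(\xi) \odot \eta), \qquad \xi, \eta \in \bbD(X).
\]
Associativity and unitality of the action, $\bbD(\bar{X} \boxtimes X)$-linearity of the pairing in the right slot, and the identity $\langle \xi, \eta \rangle^* = \langle \eta, \xi \rangle$ then reduce to naturality of $\bbD^2$ and $j^\bbD$, coherence with the pentagon diagram, monoidality of $j^\bbD$, and the zig-zag (conjugate) equations for $(R_X, \bar{R}_X)$.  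These are all purely diagrammatic checks.

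For positivity of $\langle \xi, \xi \rangle$, I would pass to the right $\cC$-module C*-category $\cM_\bbD$ generated by $\bbD$.  Under the canonical identification $\Psi: \bbD(X) \xrightarrow{\sim} \cM_\bbD(\bbD \odot X, \bbD)$, and the identification of $\bbD(\bar{X} \boxtimes X)$ with the endomorphism C*-algebra $\cM_\bbD(\bbD \odot X)$ via Frobenius reciprocity (using the duals supplied by $R_X, \bar{R}_X$), the right action becomes pre-composition and the pairing becomes $\Psi(\xi)^* \circ \Psi(\eta)$.  Positivity of $\Psi(\xi)^* \circ \Psi(\xi)$ in $\cM_\bbD(\bbD \odot X)$ is then automatic from the C*-category axioms of $\cM_\bbD$.

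Finally, for definiteness and completeness, I would appeal to the norm equality
\[
\| \xi \|^2 = \bigl\| \bbD^2_{\bar{X}, X}(j^\bbD_X(\xi) \odot \xi) \bigr\| = \bigl\| \langle \xi, \xi \rangle_{\bbD(\bar{X} \boxtimes X)} \bigr\|
\]
established in the preliminary lemma right before Corollary~\ref{cor:intrinsiccharC*algobjs}: this shows the Hilbert C*-module norm coincides with the canonical Banach norm on $\bbD(X)$, so definiteness is immediate and completeness follows from completeness of $\bbD(X)$ as a Banach space inside $\cM_\bbD$.  The main obstacle is the third step: correctly identifying $\bbD(\bar{X} \boxtimes X)$ with $\cM_\bbD(\bbD \odot X)$ as C*-algebras in such a way that the action and inner product defined above are transported to composition and adjoint-composition, which requires carefully tracking which zig-zag (involving $R_X$ or $\bar{R}_X$) implements the Frobenius reciprocity isomorphism, and reconciling any asymmetry between $\bbD(\bar{X} \boxtimes X)$ and $\bbD(X \boxtimes \bar{X})$.
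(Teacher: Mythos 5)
Your proposal is correct and follows essentially the same route as the paper: the same right action (capping off with the duality morphism), the same inner product $\bbD^2_{\bar X,X}(j^\bbD_X(\xi)\odot\eta)$, positivity via the identification $\Psi:\bbD(X)\simeq\cM_\bbD(\bbD\odot X,\bbD)$ so that the pairing becomes $\Psi(\xi)^*\circ\Psi(\eta)$, and completeness from the coincidence of the Hilbert C*-module norm with the canonical Banach norm. You are somewhat more explicit than the paper about the positivity step and the Frobenius identification of $\bbD(\bar X\boxtimes X)$ with $\cM_\bbD(\bbD\odot X)$, which the paper leaves implicit.
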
 
\begin{proof}
    The action is defined, for $a \in \bbD(X)$ and $x \in \bbD(\overline{X} \boxtimes X)$, by
    \[ a \triangleleft x := \bbD(R_X \boxtimes \id_X) \left(\bbD^2_{X, \overline{X} \boxtimes X}( a \odot x) \right) . \]
   The $\bbD(\overline{X} \boxtimes X)$-valued inner product is given by
   \[ \langle a,a' \rangle_{\bbD(\bar{X} \boxtimes X)} : = \bbD^2_{\overline{X},X}(j^\bbD(a) \odot a') , \]
   which is shown to be $\bbD(\overline{X} \boxtimes X)$-linear by means of the following computation:
   \begin{align*}
       \langle a, a' \triangleleft x \rangle _{\bbD(\bar{X} \boxtimes X)} & = \bbD^2_{\overline{X},X}\left(j^\bbD(a) \odot \bbD(R_X \boxtimes \id_X) (\bbD^2_{X,\overline{X} \boxtimes X}(a' \odot x))\right) \\
       & = \bbD(\id_{\overline{X}} \boxtimes R_X \boxtimes \id_X) \bbD^2_{\overline{X},X \boxtimes \overline{X} \boxtimes X} \left( j^\bbD(a) \odot \bbD^2_{X,\overline{X} \boxtimes X}(a' \odot x) \right) \\
       & = \bbD(\id_{\overline{X}} \boxtimes R_X \boxtimes \id_X) \bbD^2_{\overline{X} \boxtimes X,{\overline{X} \boxtimes X}}( \bbD^2_{\overline{X},X}(j^\bbD(a) \odot a') \odot x) \\
       & = \langle a,a'\rangle_{\bbD(\bar{X} \boxtimes X)} \cdot x .
   \end{align*}

   The right Hilbert $\bbD(\bbOne)$-module structure of $\bbD(X)$ is induced from the Hilbert $\bbD(\overline{X} \boxtimes X)$-module structure together with the conditional expecation $\bbE^\bbD_X: \bbD(\overline{X} \boxtimes X) \to \bbD(\bbOne)$, meaning
   \[ \<x,y\>_{\bbD(\bbOne)} = \bbE_X^\bbD(\<x,y\>_{\bbD(\bar{X} \boxtimes X)}) .\] 
   Thus the Banach space structure of $\bbD(X)$ also coincides with the one coming from the right Hilbert $\bbD(\overline{X} \boxtimes X)$-module structure. From this it follows that the topology on $\bbD(X)$ induced by the $\bbD(\bar{X} \boxtimes X)$-valued inner product is the same as the one induced by the $\bbD(\bbOne)$-valued inner product, due to finiteness of the index of $\bbE_X^\bbD$. In particular, $\bbD(X)$ is closed for the former topology.
\end{proof}

\subsection{Coend realization} 

Given an arbitrary UTC $\cC$ and C*-algebra objects $\bbA$ and $\bbB$ in $\Vec(\cC^{\op})$ and $\Vec(\cC)$, respectively, it was shown in \cite{MR3948170} that the $*$-algebra 
\[ |\bbA \times \bbB| := \bigoplus_{X \in \Irr(\cC)} \bbA(X) \odot \bbB(X) \]
has a universal C*-completion.
We show in this section that there is a canonical minimal C*-completion $\bbA \bowtie \bbB$ of $|\bbA \times \bbB|$, analogous to regular representations, featuring a faithful conditional expectation $\bbA \bowtie \bbB \to \bbA(\bbOne) \otimes \bbB(\bbOne)$. This is analogous to the comparison between universal and reduced group C$^*$-algebras.

For $X \in \cC$, as in Definition \ref{defi:groundC*algebra} we write $\bbE^\bbA_X$ and $\bbE^\bbB_X$ for the conditional expectations $\bbA(\overline{X} \boxtimes X) \to \bbA(\bbOne)$ and $\bbB(\overline{X} \boxtimes X) \to \bbB(\bbOne)$, respectively. Let $\bbA(X) \otimes \bbB(X)$ be the exterior tensor product of the $\bbA(\bbOne)$-$\bbA(\bbOne)$-correspondence $\bbA(X)$ with the $\bbB(\bbOne)$-$\bbB(\bbOne)$-correspondence $\bbB(X)$, and define 
\[ \cE := \bigoplus_{X \in \Irr(\cC)}^{\ell^2} \bbA(X) \otimes \bbB(X) \]
to be the completion of $\bigoplus_{X \in \Irr(\cC)} \bbA(X) \otimes \bbB(X)$ as a $(\bbA(\bbOne) \otimes \bbB(\bbOne))$-$(\bbA(\bbOne) \otimes \bbB(\bbOne))$-correspondence. 

Given irreducible objects $X_0,X_1,X_2 \in \cC$, there is a Hilbert space structure on $\cC(X_0, X_1 \boxtimes X_2)$ given by $\langle x,  y \rangle := x^* y$. Let us fix, for each triple $(X_0,X_1,X_2)$ of irreducible objects, a choice of an orthonormal basis $O(X_0,X_1 \boxtimes X_2)$ for this Hilbert space.

\begin{lem}
    Let $\cC$ be a UTC and $\bbD$ a C*-algebra object in $\Vec(\cC)$. For $X_0,X_1,X_2 \in \Irr(\cC)$, given $\xi_0 \in \bbD(X_0)$, the linear map $\xi_0 \triangleright -: \bbD(X_1) \to \bbD(X_2)$ given by
    $$\xi_0 \triangleright \xi_1 := \sum_{v \in O(X_2,X_0  \boxtimes X_1)} \bbD(v) \left( \bbD^2_{X_0,X_1}(\xi_0 \odot \xi_1) \right)$$
    is an adjointable map of $\bbD(\bbOne)$-Hilbert C*-modules. Its adjoint is then given by $j^\bbD_{X_0}(\xi_0) \triangleright (-)$.
\end{lem}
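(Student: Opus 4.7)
The plan is to verify right $\bbD(\bbOne)$-linearity of $\xi_0 \triangleright -$ and then to check the adjoint relation
\[
\langle \xi_0 \triangleright \xi_1, \xi_2 \rangle_{\bbD(\bbOne)} = \langle \xi_1, j^\bbD_{X_0}(\xi_0) \triangleright \xi_2 \rangle_{\bbD(\bbOne)}
\]
by directly expanding both sides.

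Right linearity is routine. Recall the right $\bbD(\bbOne)$-action on $\bbD(Y)$ is $\xi \cdot a = \bbD^2_{Y, \bbOne}(\xi \odot a)$, so associativity of the lax structure $\bbD^2$ gives $\bbD^2_{X_0, X_1}(\xi_0 \odot (\xi_1 \cdot a)) = \bbD^2_{X_0, X_1}(\xi_0 \odot \xi_1) \cdot a$, and $\bbD(v)$ commutes with the right $\bbD(\bbOne)$-action by the naturality of $\bbD$ as a functor. Together these imply right $\bbD(\bbOne)$-linearity.

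For the adjoint, I expand $\langle \xi_0 \triangleright \xi_1, \xi_2 \rangle_{\bbD(\bbOne)}$ using the inner product formula from the previous lemma, substitute the definition of $\xi_0 \triangleright \xi_1$, and apply in sequence: naturality of $j^\bbD$ to push it past $\bbD(v)$, producing $\bbD(\bar v)$ with $\bar v: \bar{X_2} \to \bar{X_1} \boxtimes \bar{X_0}$; monoidality of $j^\bbD$ to get $j^\bbD_{X_0 \boxtimes X_1}(\bbD^2_{X_0, X_1}(\xi_0 \odot \xi_1)) = \bbD^2_{\bar{X_1}, \bar{X_0}}(j^\bbD(\xi_1) \odot j^\bbD(\xi_0))$; naturality and associativity of $\bbD^2$ to refactor the resulting product; and $\bbE^\bbD_{X_2} = d_{X_2}^{-1} \bbD(R_{X_2})$ together with contravariance of $\bbD$ to absorb $\bbD(\bar v \boxtimes \id_{X_2})$ into $\bbD(R_{X_2})$. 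A symmetric computation for $\langle \xi_1, j^\bbD_{X_0}(\xi_0) \triangleright \xi_2 \rangle_{\bbD(\bbOne)}$ yields the same expression applied to $\bbD^2_{\bar{X_1}, \bar{X_0} \boxtimes X_2}(j^\bbD(\xi_1) \odot \bbD^2_{\bar{X_0}, X_2}(j^\bbD(\xi_0) \odot \xi_2))$, so adjointability reduces to the purely categorical identity
\[
d_{X_2}^{-1} \sum_{v \in O(X_2, X_0 \boxtimes X_1)} (\bar v \boxtimes \id_{X_2}) \circ R_{X_2} = d_{X_1}^{-1} \sum_{w \in O(X_1, \bar{X_0} \boxtimes X_2)} (\id_{\bar{X_1}} \boxtimes w) \circ R_{X_1}
\]
in $\cC(\bbOne, \bar{X_1} \boxtimes \bar{X_0} \boxtimes X_2)$.

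This last identity is the main obstacle, but it is an instance of Frobenius reciprocity in the UTC $\cC$. The bases $O(X_2, X_0 \boxtimes X_1)$ and $O(X_1, \bar{X_0} \boxtimes X_2)$ are related by the Frobenius bijection $v \mapsto (\id_{\bar{X_0}} \boxtimes v^*) \circ (R_{X_0} \boxtimes \id_{X_1})$, which is an isomorphism of finite-dimensional Hilbert spaces that fails to be isometric by a factor controlled by the intrinsic dimensions; the prefactors $d_{X_2}^{-1}$ and $d_{X_1}^{-1}$ precisely compensate for this rescaling. Concretely, both sides can be recognized as $(d_{X_0} d_{X_1})^{-1} (\id_{\bar{X_1} \boxtimes \bar{X_0}} \boxtimes p_{X_2}) R_{X_0 \boxtimes X_1}$, using the compatibility $R_{X_0 \boxtimes X_1} = (\id_{\bar{X_1}} \boxtimes R_{X_0} \boxtimes \id_{X_1}) \circ R_{X_1}$ of standard solutions with tensor products and the resolution $\id_{X_0 \boxtimes X_1} = \sum_{X_2, v} v v^*$ of the identity over irreducibles; the snake relations then collapse the two expressions to this common canonical element, completing the proof.
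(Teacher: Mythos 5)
Your proof follows essentially the same route as the paper's: expand $\langle \xi_0 \triangleright \xi_1, \xi_2\rangle_{\bbD(\bbOne)}$ using naturality of $j^\bbD$, monoidality of $j^\bbD$ and $\bbD^2$, and contravariant functoriality to absorb everything into a single morphism applied to $\bbD^2_{\bar{X}_1,\bar{X}_0\boxtimes X_2}\bigl(j^\bbD(\xi_1)\odot \bbD^2_{\bar{X}_0,X_2}(j^\bbD(\xi_0)\odot\xi_2)\bigr)$, and then match the two sides via Frobenius reciprocity between $\cC(X_2,X_0\boxtimes X_1)$ and $\cC(X_1,\bar{X}_0\boxtimes X_2)$ --- this is exactly the paper's step $(*)$. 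One point in your favor: you keep the normalizations $d_{X_2}^{-1}$ and $d_{X_1}^{-1}$ from $\bbE^\bbD_X=d_X^{-1}\bbD(R_X)$ and correctly observe that they compensate for the fact that the Frobenius bijection rescales norms by $\sqrt{d_{X_2}/d_{X_1}}$; the paper's writeup silently drops these factors from the inner product, so its basis change at $(*)$ only balances for the reason you identify. Two caveats. First, your closing identification of both sides with $(d_{X_0}d_{X_1})^{-1}(\id_{\bar{X}_1\boxtimes\bar{X}_0}\boxtimes p_{X_2})R_{X_0\boxtimes X_1}$ does not typecheck: that morphism lands in $\bar{X}_1\boxtimes\bar{X}_0\boxtimes X_0\boxtimes X_1$, not in $\bar{X}_1\boxtimes\bar{X}_0\boxtimes X_2$, so you need one further leg $v^*$ (or to phrase the common value as the canonical element of $\cC(X_2,X_0\boxtimes X_1)\otimes\overline{\cC(X_2,X_0\boxtimes X_1)}$ acting on $R_{X_0\boxtimes X_1}$); as written the ``common canonical element'' step is broken, though the surrounding argument via the rescaled Frobenius bijection already suffices. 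Second, both your proof and the paper's require that the bases $O(X_2,X_0\boxtimes X_1)$ and $O(X_1,\bar{X}_0\boxtimes X_2)$ actually correspond under (normalized) Frobenius reciprocity --- each side of your key identity is a sum of a (conjugate-)linear map over an orthonormal basis and is therefore not basis-independent, so for independently chosen bases the stated adjoint formula can fail already by a phase. You assert the compatibility; the paper only fixes the bases arbitrarily, so this is a hypothesis you (and the paper) are implicitly adding rather than proving.
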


\begin{proof}
    Given $X \in \cC$, the definition of the $\bbD(\bbOne)$-valued inner product on $\bbD(X)$ is
    $$\< \xi, \xi' \>_{\bbD(\bbOne)} := \bbD(R_X) \left( \bbD^2_{\overline{X},X} ( j^\bbD(\xi) \odot \xi') \right) . $$
    Then, for $\xi_i \in \bbD(X_i)$, $i \in \{0,1,2\}$,
    
    \begin{align*}
        \< \xi_0 \triangleright \xi_1, \xi_2 \>_{\bbD(\bbOne)} & = \sum_{v \in O(X_2,X_0 \boxtimes X_1)}\<\bbD(v) \left( \bbD^2_{X_0,X_1}(\xi_0\odot \xi_1)\right),\xi_2\>_{\bbD(\bbOne)} \\
        &=\sum_{v \in O(X_2,X_0 \boxtimes X_1)}\bbD(R_{X_2})\left(\bbD^2_{\overline{X}_2,X_2} \left(j^\bbD_{X_2}(\left(\left(\bbD(v)(\bbD^2_{X_0,X_1}(\xi_0\odot\xi_1)\right) \right)) \odot \xi_2 \right)\right) .
    \end{align*}
        Let $(v^*)^\vee : = (\bar{R}_{X_2}^* \boxtimes \id_{\bar{X}_1 \boxtimes \bar{X}_0}) (\id_{\bar{X}_2} \boxtimes v^* \boxtimes \id_{\bar{X}_1 \boxtimes \bar{X}_0}) (\id_{\bar{X}_2} \boxtimes R_{X_0 \boxtimes X_1})$. From the axioms in the definition of C$^*$-algebra objects, it follows that
\[j^\bbD_{X_2}(\left(\left(\bbD(v)(\bbD^2_{X_0,X_1}(\xi_0\odot\xi_1)\right) \right)) = \left( \bbD((v^*)^\vee) \bbD^2_{\overline{X}_1,\overline{X}_0} (j^\bbD_{X_1}(\xi_1) \odot j^\bbD_{X_0}(\xi_0) )\right) . \]
        Consequently, 
        \begin{align*}
        \< \xi_0 \triangleright \xi_1, \xi_2 \>_{\bbD(\bbOne)} & = \sum_{v \in O(X_2,X_0 \boxtimes X_1)} \bbD(R_{X_2}) \left( \bbD^2_{\overline{X}_2,X_2} \left( \bbD((v^*)^\vee) \bbD^2_{\overline{X}_1,\overline{X}_0} (j^\bbD_{X_1}(\xi_1) \odot j^\bbD_{X_0}(\xi_0) )\right) \odot \xi_2 \right) \\
        & = \sum_{v \in O(X_2,X_0 \boxtimes X_1)} \bbD(R_{X_2}) (\bbD((v^*)^\vee )\odot \id)  \left( \bbD^2_{\overline{X}_1 \boxtimes \overline{X}_0,X_2} \left( \bbD^2_{\overline{X}_1,\overline{X}_0} ( j^\bbD_{X_1}(\xi_1) \odot j^\bbD_{X_0}(\xi_0))  \odot \xi_2  \right) \right) \\
        & = \sum_{v \in O(X_2,X_0 \boxtimes X_1)} \bbD(R_{X_2}) \left( \bbD((v^*)^\vee \boxtimes \id ) \left( \bbD^2_{\overline{X}_1, \overline{X}_0 \boxtimes X_2}\left( j^\bbD_{X_1}(\xi_1) \odot \bbD^2_{\overline{X}_0,X_2} (j^\bbD_{X_0}(\xi_0) \odot\xi_2)   \right) \right) \right) \\
         &= \sum_{v \in O(X_2,X_0 \boxtimes X_1)} \bbD(R_{X_2}) \left(\bbD(\id \boxtimes v^* ) \left( \bbD^2_{\overline{X}_1, \overline{X}_0 \boxtimes  X_2} \left( j^\bbD_{X_1}(\xi_1) \odot \bbD^2_{\overline{X}_0,X_2} (j^\bbD_{X_0}(\xi_0) \odot \xi_2)   \right) \right) \right) \\
        & \overset{(*)}{=} \sum_{w \in O(X_1,\overline{X}_0 \boxtimes X_2)} \bbD(R_{X_1}) \left( \bbD(\id \boxtimes w) \left( \bbD^2_{\overline{X}_1, \overline{X}_0 \boxtimes X_2} \left( j^\bbD_{X_1}(\xi_1) \odot \bbD^2_{\overline{X}_0,X_2} (j^\bbD_{X_0}(\xi_0) \odot \xi_2)   \right) \right) \right) \\
        & = \sum_{w \in O(X_1,\overline{X}_0 \boxtimes  X_2)} \bbD(R_{X_1}) \left( \left( \bbD^2_{\overline{X}_1, \overline{X}_0 \boxtimes X_2} \left( j^\bbD_{X_1}(\xi_1)\odot \bbD(w) \bbD^2_{\overline{X}_0,X_2} (j^\bbD_{X_0}(\xi_0) \odot \xi_2)   \right) \right) \right)  \\
        & = \< \xi_1, j^\bbD_{X_0}(\xi_0) \triangleright \xi_2 \>.
    \end{align*}
    In the fifth equality marked with $(*)$, we are making use of the canonical isomorphism 
    \[\cC(X_2, X_0  \boxtimes X_1) \overset{\text{Frob.}}{\simeq} \cC(\overline{X}_0 \boxtimes X_2,X_1) \overset{*}{\simeq} \cC(X_1, \overline{X}_0 \boxtimes  X_2) . \] 
\end{proof}

\begin{rem}
    As a consequence of the above computation, each map
    $$\xi_1 \mapsto \bbD(\omega) \left(\bbD^2_{X_0,X_1}(\xi_0 \odot \xi_1 )\right) , $$
    with $\omega \in O(X_2, X_0 \boxtimes X_1)$, is adjointable. We shall make use of this observation later.
\end{rem}

Applying the Lemma above to the C$^*$-algebra objects $\bbA$ and $\bbB$ simultaneously, we obtain the following.

\begin{cor}
    $|\bbA \times \bbB|$ is canonically and faithfully represented on $\cL_{\bbA(\bbOne) \otimes \bbB(\bbOne)}(\cE)$. Moreover, the vector $\Omega := 1_{\bbA(\bbOne)} \otimes 1_{\bbB(\bbOne)}$ is a cyclic vector for $| \bbA \times \bbB|$.
\end{cor}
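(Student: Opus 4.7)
The plan is to construct the representation $\pi \colon |\bbA \times \bbB| \to \cL_{\bbA(\bbOne) \otimes \bbB(\bbOne)}(\cE)$ by assembling, fiberwise, the adjointable maps of the preceding Lemma, applied to both $\bbA$ and $\bbB$ simultaneously. Concretely, for $a \otimes b \in \bbA(X_0) \odot \bbB(X_0)$ and $a_1 \otimes b_1 \in \bbA(X_1) \otimes \bbB(X_1)$ with $X_0, X_1 \in \Irr(\cC)$, I would set
\[
\pi(a \otimes b)(a_1 \otimes b_1) := \sum_{X_2 \in \Irr(\cC)} \sum_{v \in O(X_2, X_0 \boxtimes X_1)} (a \triangleright_v a_1) \otimes (b \triangleright_v b_1),
\]
where $\triangleright_v$ denotes the single-$v$ summand from the preceding Lemma, adapted to the variance of $\bbA$. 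Finiteness of $\cC(X_2, X_0 \boxtimes X_1)$ for fixed $X_0, X_1$ makes this a finite sum, so $\pi(a \otimes b)$ is well defined on the algebraic dense subspace $|\bbA \times \bbB| \subset \cE$; I would then verify compatibility with the product on $|\bbA \times \bbB|$ from \cite{MR3948170}, which reduces to associativity of $\bbA^2$ and $\bbB^2$ and the completeness relation $\sum_{X,v} v v^* = \id$ inside $\cC(X_0 \boxtimes X'_0 \boxtimes X_1)$.

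Next, I would promote each $\pi(a \otimes b)$ to a genuine element of $\cL_{\bbA(\bbOne) \otimes \bbB(\bbOne)}(\cE)$. The preceding Lemma supplies an obvious candidate adjoint at the level of each summand, namely $j^\bbA(a) \triangleright_v -$ paired with $j^\bbB(b) \triangleright_v -$. A direct bookkeeping using the orthonormality of $O(X_2, X_0 \boxtimes X_1)$ yields
\[
\langle \pi(a \otimes b)\xi, \eta\rangle_{\bbA(\bbOne) \otimes \bbB(\bbOne)} = \langle \xi, \pi(j^\bbA(a) \otimes j^\bbB(b))\eta \rangle_{\bbA(\bbOne) \otimes \bbB(\bbOne)}
\]
for $\xi, \eta$ in the algebraic dense subspace. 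Combined with the iterated C$^*$-type estimate $\|\pi(x)\xi\|^2 \leq \|\pi(x^*x)\|\,\|\xi\|^2$ and the finite-index Pimsner--Popa inequalities of Lemma \ref{lemma:PPindexforcategoricalcondexp}, this yields uniform boundedness over $X_1 \in \Irr(\cC)$ and produces the desired adjointable extension, automatically making $\pi$ a $*$-homomorphism.

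Cyclicity and faithfulness then both fall out of the single observation $\pi(a \otimes b)\Omega = a \otimes b$, for every $a \otimes b \in \bbA(X_0) \odot \bbB(X_0)$. Indeed, taking $X_1 = \bbOne$ and $a_1 \otimes b_1 = \Omega$ in the defining sum forces $X_2 = X_0$, and $O(X_0, X_0 \boxtimes \bbOne)$ is a singleton consisting of the unitor, so the sum collapses to $\bbA^2(a \odot 1) \otimes \bbB^2(b \odot 1) = a \otimes b$ by the unit axioms of $\bbA$ and $\bbB$. Consequently $\pi(|\bbA \times \bbB|)\Omega = |\bbA \times \bbB|$, which is dense in $\cE$ by construction, establishing cyclicity; and if $\pi(x) = 0$ then $x = \pi(x)\Omega = 0$, establishing faithfulness.

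The hard part will be the norm control in the second step: the preceding Lemma ensures adjointability on each individual fiber, but nothing a priori prevents the norms of the fiberwise maps from growing with $X_1$. This is exactly where the bi-Hilbertian structure on the fibers $\bbA(X_1), \bbB(X_1)$ over the ground C$^*$-algebras, together with the finite Pimsner--Popa index of $\bbE^\bbA_{X_1}, \bbE^\bbB_{X_1}$, must be used to bound $\|\pi(a \otimes b)\|$ by a constant depending only on $a \otimes b$, uniformly in $X_1$.
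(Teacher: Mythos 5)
Your construction is the route the paper intends --- the paper's entire justification for this Corollary is the single sentence ``Applying the Lemma above to the C$^*$-algebra objects $\bbA$ and $\bbB$ simultaneously,'' so fleshing it out is genuinely worthwhile --- and the parts you execute are correct. In particular, the cyclicity and faithfulness arguments are complete: for $T \in \bbA(X_0)\odot\bbB(X_0)$ one has $\pi(T)\Omega = T$ because $O(X_0, X_0\boxtimes\bbOne)$ reduces to the unitor, so $\pi\bigl(|\bbA\times\bbB|\bigr)\Omega = |\bbA\times\bbB|$ is dense in $\cE$ by construction, and injectivity of $\bbA(X)\odot\bbB(X)\hookrightarrow\bbA(X)\otimes\bbB(X)$ (the exterior tensor product seminorm is a norm on the algebraic tensor product of Hilbert modules) gives faithfulness. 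Your remark about adjusting for the variance of $\bbA\in\Vec(\cC^{\op})$ (using $\bbA(v^*)$ where the $\bbB$-side uses $\bbB(v)$) is also the right adaptation.

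The genuine gap is the one you flag at the end and then leave open: uniform boundedness of $\pi(a\otimes b)$ over $X_1\in\Irr(\cC)$, and the two tools you name will not close it as stated. The estimate $\|\pi(x)\xi\|^2\le\|\pi(x^*x)\|\,\|\xi\|^2$ presupposes that $\pi(x^*x)$ is already a bounded operator on all of $\cE$, which is precisely what is being proved, so it is circular; and the Pimsner--Popa constant of Lemma \ref{lemma:PPindexforcategoricalcondexp} on the fiber over $X_1$ is $d_{X_1}^2$, which is unbounded in $X_1$, so any term-by-term estimate of the maps $\bbD(v)$ through the norm comparison $\|\cdot\|_{\bbD(\bbOne)}\le\|\cdot\|\le d_{X_0\boxtimes X_1}\|\cdot\|_{\bbD(\bbOne)}$ degenerates as well. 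What actually works is to factor the restriction of $\pi(T)$ to $\bbA(X_1)\otimes\bbB(X_1)$ as: (i) the creation map $\xi\mapsto T\boxtimes\xi$ into $\bigl(\bbA(X_0)\otimes\bbB(X_0)\bigr)\underset{\bbA(\bbOne)\otimes\bbB(\bbOne)}{\boxtimes}\bigl(\bbA(X_1)\otimes\bbB(X_1)\bigr)$, whose norm is $\|\langle T,T\rangle\|^{1/2}=\|T\Omega\|$ independently of $X_1$; (ii) the isometry $\bbD^2$ of the fusion lemma into $\bbA(X_0\boxtimes X_1)\otimes\bbB(X_0\boxtimes X_1)$; and (iii) the decomposition of the latter into its $X_2$-isotypic summands. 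Only step (iii) still requires an $X_1$-uniform bound, and there one must use the standard-solution normalization $\bbE^\bbD_X=d_X^{-1}\bbD(R_X)$ built into the inner products of $\cE$ together with $\sum_{X_2,v}vv^*=\id_{X_0\boxtimes X_1}$; this is exactly the computation underlying the realization of \cite[\S 4.4]{MR3948170} that the paper leans on implicitly, and it is the piece your write-up still owes before the adjointable extension, and hence the $*$-homomorphism property on all of $\cE$, can be asserted.
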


\begin{defn}
    Define $\bbA \bowtie \bbB$ to be the norm closure of $|\bbA \times \bbB|$ in $\cL_{\bbA(\bbOne) \otimes \bbB(\bbOne)}(\cE)$. 
\end{defn}

Using the notation $\langle \cdot, \cdot \rangle$ for the $\bbA(\bbOne) \otimes \bbB(\bbOne)$-valued inner product on $\cE$, the formula
\[ \bbA \bowtie \bbB \ni T \mapsto \bbE(T) := \langle T \Omega, \Omega \rangle \]
defines a conditional expectation $\bbA \bowtie \bbB \to \bbA(\bbOne) \otimes \bbB(\bbOne)$. Our goal is to show that this conditional expectation is faithful.

\begin{lem}
    Let $\sum_{i=1}^n a_i \odot b_i \in \bbA(X) \odot \bbB(X)$. Then
    \[ \sum_{i,j=1}^n \bbA^2_{\overline{X},X}(j^\bbA_X(a_i) \odot a_j) \otimes \bbB^2_{\overline{X},X} (j^\bbB_X(b_i) \odot b_j) \]
    is positive in $\bbA(\overline{X} \boxtimes X) \otimes \bbB(\overline{X} \boxtimes X)$.
    \label{lemma:positivity1}
\end{lem}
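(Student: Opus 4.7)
The plan is to recognize the displayed element as the self inner product of a single vector in an external tensor product of Hilbert C*-modules, from which positivity is automatic.

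First, invoking Lemma \ref{lem:canonicalD(barXX)-modulestructure} separately for $\bbA$ and $\bbB$, the fibers $\bbA(X)$ and $\bbB(X)$ acquire canonical right Hilbert C*-module structures over $\bbA(\overline{X} \boxtimes X)$ and $\bbB(\overline{X} \boxtimes X)$, respectively, with inner products
\[ \langle a, a'\rangle_{\bbA(\overline{X}\boxtimes X)} = \bbA^2_{\overline{X},X}(j^\bbA_X(a)\odot a'), \qquad \langle b, b'\rangle_{\bbB(\overline{X}\boxtimes X)} = \bbB^2_{\overline{X},X}(j^\bbB_X(b)\odot b'). \]
Next I would form the external tensor product $\bbA(X) \otimes \bbB(X)$, which is a right Hilbert C*-module over the minimal tensor product $\bbA(\overline{X}\boxtimes X) \otimes \bbB(\overline{X}\boxtimes X)$, with inner product determined on elementary tensors by
\[ \langle a \otimes b,\, a' \otimes b'\rangle = \langle a, a'\rangle_{\bbA(\overline{X}\boxtimes X)} \otimes \langle b, b'\rangle_{\bbB(\overline{X}\boxtimes X)} \]
and extended bilinearly.

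With this setup, set $\xi := \sum_{i=1}^n a_i \otimes b_i \in \bbA(X) \otimes \bbB(X)$. Bilinearity of the inner product immediately gives
\[ \langle \xi, \xi\rangle = \sum_{i,j=1}^n \bbA^2_{\overline{X},X}(j^\bbA_X(a_i)\odot a_j) \otimes \bbB^2_{\overline{X},X}(j^\bbB_X(b_i)\odot b_j), \]
which is exactly the element in the statement. Since the inner product of any element of a Hilbert C*-module with itself is positive in the base algebra, this element is positive in $\bbA(\overline{X}\boxtimes X)\otimes \bbB(\overline{X}\boxtimes X)$, finishing the proof.

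There is no real obstacle: the only subtlety is making sure that the two inner product structures of Lemma \ref{lem:canonicalD(barXX)-modulestructure} are genuinely compatible with forming the external tensor product, which is standard for Hilbert C*-modules over the minimal tensor product of C*-algebras. No further use of the pentagon axioms or of the conjugate equations beyond what is packaged into Lemma \ref{lem:canonicalD(barXX)-modulestructure} is needed.
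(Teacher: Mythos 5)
Your proof is correct and is essentially identical to the paper's: both invoke Lemma \ref{lem:canonicalD(barXX)-modulestructure} to equip $\bbA(X)$ and $\bbB(X)$ with their canonical right Hilbert C*-module structures and then recognize the displayed element as the self inner product of $\sum_i a_i \otimes b_i$ in the exterior tensor product $\bbA(X)\otimes\bbB(X)$. Nothing further is needed.
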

\begin{proof}
    Consider on $\bbA(X)$ the canonical right Hilbert $\bbA(\overline{X} \boxtimes X)$-C*-module structure from Lemma \ref{lem:canonicalD(barXX)-modulestructure}. Observe that 
    \[ \sum_{i,j=1}^n \bbA^2_{\overline{X},X} (j^\bbA_X(a_i) \odot a_j) \otimes \bbB^2_{\overline{X},X} (j^\bbB_X(b_i) \odot b_j) \]
    is the $\bbA(\overline{X} \boxtimes X) \otimes \bbB(\overline{X} \boxtimes X)$-valued inner product of $\sum_i a_i \otimes b_i$ with itself in the exterior tensor product $\bbA(X) \otimes \bbB(X)$.
\end{proof}

\begin{lem}
    For $T \in \bbA(X) \odot \bbB(X) \subset \bbA \bowtie \bbB$, it holds
    \[ \| T \Omega \| \leq \| T \|_{\bbA \bowtie \bbB} \leq d_X^2 \| T \Omega \| \].
    \label{lem:Pimsner-Popaineq}
\end{lem}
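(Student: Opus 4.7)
The \emph{lower bound} is immediate: $\Omega = 1_{\bbA(\bbOne)} \otimes 1_{\bbB(\bbOne)}$ satisfies $\|\Omega\|^2_\cE = \|\langle\Omega,\Omega\rangle\| = 1$, so $\|T\Omega\| \le \|T\|_{\bbA\bowtie\bbB}$ follows by definition of the operator norm.

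For the \emph{upper bound}, the plan is to introduce the element
\[
P := \sum_{i,j}\bbA^2_{\bar X,X}(j^{\bbA}_X(a_i)\odot a_j)\otimes\bbB^2_{\bar X,X}(j^{\bbB}_X(b_i)\odot b_j) \in \bbA(\bar X\boxtimes X)\otimes\bbB(\bar X\boxtimes X).
\]
By Lemma~\ref{lemma:positivity1}, $P \ge 0$; and by construction of the $\bbA(\bbOne)\otimes\bbB(\bbOne)$-valued inner product on the exterior tensor product $\bbA(X)\otimes\bbB(X)$, one has $(\bbE^{\bbA}_X \otimes \bbE^{\bbB}_X)(P) = \langle T\Omega, T\Omega\rangle_{\bbA(\bbOne)\otimes\bbB(\bbOne)}$. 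Applying Lemma~\ref{lemma:PPindexforcategoricalcondexp} separately to $\bbE^{\bbA}_X$ and $\bbE^{\bbB}_X$, the tensor product conditional expectation $\bbE^{\bbA}_X \otimes \bbE^{\bbB}_X$ has Pimsner--Popa index at most $d_X^2 \cdot d_X^2 = d_X^4$, whence
\[
\|P\| \le d_X^4\,\|T\Omega\|^2.
\]
The remaining step is to establish $\|T\|_{\bbA\bowtie\bbB}^2 \le \|P\|$.

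To that end, I would view $T$ as a ``creation operator'' with symbol $T\Omega \in \bbA(X)\otimes\bbB(X)$, acting on a homogeneous vector $\zeta = \xi\otimes\eta \in \bbA(Y)\otimes\bbB(Y)$ by
\[
T\zeta = \sum_{X_2 \in \Irr(\cC)} \sum_{v \in O(X_2, X\boxtimes Y)} \sum_i \bbA(v)\bbA^2_{X,Y}(a_i\odot\xi)\otimes\bbB(v^*)\bbB^2_{X,Y}(b_i\odot\eta),
\]
whose summands over distinct $(X_2,v)$ lie in mutually orthogonal components of $\cE$. Using the isometricity of $\bbA^2, \bbB^2$ (Corollary~\ref{cor:intrinsiccharC*algobjs} and the lemma preceding it) together with the completeness relations of the ONBs $O(X_2, X\boxtimes Y)$, I expect $\|T\zeta\|^2$ to collapse to an expression of the form $\|\langle \zeta, P \triangleright \zeta\rangle\| \le \|P\|\cdot\|\zeta\|^2$, where $P \triangleright$ denotes the natural module action of $P$ realized through the weak tensor structure. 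The \emph{main obstacle} is precisely this collapse: although morally it is the familiar statement that a Hilbert-module creation operator is bounded by the module norm of its symbol, in our categorical setting the proof requires careful bookkeeping with the Frobenius-reciprocity moves used earlier (in establishing adjointability of $\xi_0 \triangleright (-)$), the compatibility of the involutions $j^{\bbA}, j^{\bbB}$ with the tensor structures $\bbA^2, \bbB^2$, and the resolutions of identity provided by the ONBs $O(X_2, X\boxtimes Y)$.
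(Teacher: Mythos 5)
Your strategy coincides with the paper's: both proofs hinge on the same positive element $P=\sum_{i,j}\bbA^2_{\overline{X},X}(j^\bbA_X(a_i)\odot a_j)\otimes\bbB^2_{\overline{X},X}(j^\bbB_X(b_i)\odot b_j)$, the identity $(\bbE^\bbA_X\otimes\bbE^\bbB_X)(P)=\bbE(T^*T)=\langle T\Omega,T\Omega\rangle$, positivity of $P$ via Lemma~\ref{lemma:positivity1}, and the Pimsner--Popa index bound $d_X^4$ for $\bbE^\bbA_X\otimes\bbE^\bbB_X$. Your lower bound (via $\|\Omega\|=1$) is equivalent to the paper's (via contractivity of $\bbE$). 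So the skeleton is the right one, and your computation $\|P\|\le d_X^4\|T\Omega\|^2$ is correct.

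The gap is exactly the step you flag and do not prove: $\|T\|^2_{\bbA\bowtie\bbB}\le\|P\|$. The creation-operator expansion you sketch is the hard way around and, as written, is only a plan. The efficient closure --- the one compressed into the paper's single inequality $\|T^*T\|_{\bbA\bowtie\bbB}\le d_X^4\|\bbE(T^*T)\|$ --- is to use the C$^*$-identity $\|T\|^2=\|T^*T\|_{\bbA\bowtie\bbB}$ and to recognize $T^*T$ as the image of $P$ under a contractive $*$-representation of the C$^*$-algebra $\bbA(\overline{X}\boxtimes X)\otimes\bbB(\overline{X}\boxtimes X)$ on $\cE$. Concretely, on each summand $\bbA(Y)\otimes\bbB(Y)$ the action of $y\odot z$ factors as the exterior tensor product of the adjointable left-multiplication actions of $\bbA(\overline{X}\boxtimes X)$ and of $\bbB(\overline{X}\boxtimes X)$ (this is Lemma~\ref{lem:canonicalD(barXX)-modulestructure} together with the adjointability of the maps $\xi_0\triangleright(-)$ established just before the definition of $\bbA\bowtie\bbB$), followed by the isometric decomposition into irreducibles given by the bases $O(W,\overline{X}\boxtimes X\boxtimes Y)$; exterior tensor products of adjointable representations are contractive for the minimal tensor norm, whence $\|T^*T\|_{\cL(\cE)}\le\|P\|$. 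Combined with your bound $\|P\|\le d_X^4\|T\Omega\|^2$, this finishes the proof. Note that this identification is genuinely needed: the Pimsner--Popa inequality by itself only controls the norm of $P$ inside $\bbA(\overline{X}\boxtimes X)\otimes\bbB(\overline{X}\boxtimes X)$, not the operator norm of $T$ on $\cE$. Your flagging of the obstacle is therefore well placed --- the published proof is terse at precisely this point --- but your proposal must not stop there, and it should close the step by the representation-theoretic argument above rather than by the orthogonality bookkeeping you outline.
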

\begin{proof}
    Writing $T = \sum_i a_i \odot b_i \in \bbA(X) \odot \bbB(X)$, we have 
    \[ \bbE(T^*T) = (E^\bbA_X \otimes E^\bbB_X) \left( \sum_{i,j} \bbA^2_{\overline{X},X}(j^\bbA_X(a_i) \odot a_j) \otimes \bbB^2_{\overline{X},X} (j^\bbB_X(b_i) \odot b_j) \right)  . \]
    Since both $E^\bbA_X$ and $E^\bbB_X$ are conditional expectations with Pimsner-Popa index at most $d_X^2$,  $E^\bbA_X \otimes E^\bbB_X$ has Pimsner-Popa index at most $d_X^4$. Moreover, the argument of $(E^\bbA_X \otimes E^\bbB_X)$ in the right-hand-side of the above inequality is positive by Lemma \ref{lemma:positivity1}. Thus,
    \[ \|T \Omega \|^2 = \|\bbE(T^*T) \| \leq \|T^*T\|_{\bbA \bowtie \bbB} \leq d_X^4 \| \bbE(T^*T) \| = d_X^4 \|T \Omega \|^2 . \]
\end{proof}

Now we prove Theorem \ref{thmalpha:condexponthecoendrealization}, which generalizes Proposition 3.7 in \cite{2023arXiv230505072H} to arbitrary C$^*$-algebra objects instead of just connected ones. 
In that case, the proof relied on the fibers being finite-dimensional, while for a general C*-algebra objects, we use the fact that its fibers are finite-index over the base algebra.

\begin{thm}[Theorem \ref{thmalpha:condexponthecoendrealization}]\label{thm:condexponthecoendrealization}
    The conditional expectation $\bbE: \bbA \bowtie \bbB \to \bbA(\bbOne) \otimes \bbB(\bbOne)$ is faithful.
    \label{thm:canonicalcondexpect}
\end{thm}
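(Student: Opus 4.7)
The plan is to show that $\Omega \in \cE$ is a separating vector for the faithful representation $\bbA \bowtie \bbB \hookrightarrow \cL_{C}(\cE)$, where $C := \bbA(\bbOne) \otimes \bbB(\bbOne)$. Since $\cE$ is a Hilbert $C$-module and $\bbE(T^*T) = \langle T\Omega, T\Omega\rangle_{C}$ by construction, the condition $\bbE(T^*T) = 0$ is equivalent to $T\Omega = 0$. Because the representation of $\bbA \bowtie \bbB$ on $\cE$ is faithful by definition of the coend realization as a norm closure inside $\cL_{C}(\cE)$, faithfulness of $\bbE$ reduces to the implication $T\Omega = 0 \Rightarrow T = 0$ as an operator on $\cE$.

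To establish this, I would exhibit a right action of the $*$-algebra $|\bbA \times \bbB|$ on $\cE$ which commutes with the left action of $\bbA \bowtie \bbB$ and for which $\Omega$ is cyclic. The natural candidate mirrors the left-action formula by swapping the two tensor factors: for $\xi_1 \in \bbD(X_1)$ and $\xi_0 \in \bbD(X_0)$ (applied separately to $\bbD = \bbA, \bbB$ and combined via the exterior tensor product),
\[ \xi_1 \triangleleft \xi_0 := \sum_{v \in O(X_2, X_1 \boxtimes X_0)} \bbD(v)\bigl( \bbD^2_{X_1, X_0}(\xi_1 \odot \xi_0) \bigr). \]
Cyclicity of $\Omega$ is immediate: setting $\xi_1 = 1 \in \bbD(\bbOne)$, the sum collapses to a single term since $O(X, \bbOne \boxtimes X) = \{\id_X\}$ and $\bbD^2$ is unital, yielding $\Omega \triangleleft (a \odot b) = a \otimes b$ for all $a \in \bbA(X), b \in \bbB(X), X \in \Irr(\cC)$. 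These vectors span a dense subspace of $\cE$, since algebraic tensors are dense in each fiber and finite direct sums are dense in the $\ell^2$-direct sum.

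The technical heart is verifying that the left and right actions of $|\bbA \times \bbB|$ on $\cE$ commute. This is an associativity statement expressing that $\cE$ carries a $|\bbA \times \bbB|$-bimodule structure extending its left regular action: both bracketings $\xi_0 \triangleright (\xi_1 \triangleleft \xi_0')$ and $(\xi_0 \triangleright \xi_1) \triangleleft \xi_0'$ yield the same decomposition of the triple product on $X_0 \boxtimes X_1 \boxtimes X_0'$. This follows from the associativity axioms for $\bbA^2, \bbB^2$ combined with Frobenius reciprocity on the orthonormal bases in $\cC(-, -)$; the asymmetric tensor conventions on $\bbA \in \Vec(\cC^{\op})$ and $\bbB \in \Vec(\cC)$ are consistent because the two families of bases $O(-, X_0 \boxtimes X_1)$ and $O(-, X_1 \boxtimes X_0)$ play complementary roles on the two sides. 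By the same Pimsner-Popa bounds underlying Lemma \ref{lem:Pimsner-Popaineq}, the right action of each $S \in |\bbA \times \bbB|$ is bounded on $\cE$, so commutativity extends from $|\bbA \times \bbB|$ to all of $\bbA \bowtie \bbB$ by norm continuity.

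Given these ingredients, the proof concludes quickly: if $T \in \bbA \bowtie \bbB$ satisfies $T\Omega = 0$, then for every $S \in |\bbA \times \bbB|$ commutativity gives $T(\Omega \triangleleft S) = (T\Omega) \triangleleft S = 0$. So $T$ vanishes on a dense subset of $\cE$, hence $T = 0$ in $\bbA \bowtie \bbB$. The main obstacle is the commutativity verification above; it is the categorical analogue of the classical fact that the right regular representation of a discrete group commutes with the left regular one and thereby provides the separating vector needed for faithfulness of the canonical trace.
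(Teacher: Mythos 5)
Your overall strategy --- exhibit a right action of $|\bbA \times \bbB|$ on $\cE$ commuting with the left action, note that $\Omega$ is cyclic for it, and conclude that $\Omega$ is separating --- is a genuinely different route from the paper's, and it is the ``right regular representation'' argument one would hope for. The reduction to $T\Omega = 0 \Rightarrow T = 0$, the cyclicity of $\Omega$ under right multiplication, and the commutation of left and right multiplication at the algebraic level (which is just associativity of $|\bbA\times\bbB|$) are all fine. The gap is the single sentence claiming that ``by the same Pimsner--Popa bounds underlying Lemma \ref{lem:Pimsner-Popaineq}, the right action of each $S \in |\bbA\times\bbB|$ is bounded on $\cE$.'' That lemma compares $\|T\Omega\|$ with $\|T\|_{\bbA\bowtie\bbB}$ only for $T$ supported in a \emph{single} graded piece $\bbA(X)\odot\bbB(X)$, with a constant $d_X^2$ that is unbounded over $X \in \Irr(\cC)$; it does not give a uniform estimate $\|US\Omega\| \leq C_S\|U\Omega\|$ over all $U \in |\bbA\times\bbB|$. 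Note that the left action is bounded for an intrinsically one-sided reason, namely $\bbE(U^*S^*SU) \leq \|S\|^2\,\bbE(U^*U)$ by positivity of $\bbE$; the mirror-image inequality $\bbE(S^*U^*US) \leq C_S\,\bbE(U^*U)$ is exactly the statement that $S\Omega$ is a right-bounded vector, and since $\bbE$ is not tracial and has infinite global Pimsner--Popa index, this requires a genuine argument (it is true in examples such as $C(\bbG)$ via modular theory, and is plausibly provable here using the uniform bound $\dim\cC(X\boxtimes Y)\le d_Y^2$ together with the isometry $\bbD(X)\boxtimes_{\bbD(\bbOne)}\bbD(Y)\to\bbD(X\boxtimes Y)$, but none of that is in your proposal). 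This matters at the precise point where you conclude: for $T = \lim_\lambda T_\lambda$ with $T\Omega = 0$ you need $R_S(T_\lambda\Omega) \to 0$ from $T_\lambda\Omega \to 0$, which is exactly boundedness of $R_S$ on $\cE$.

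The paper's proof is engineered to avoid this globalization. It fixes $S \in \bbA(Y)\odot\bbB(Y)$, projects $T_\lambda S\Omega$ onto a single output component $P_Z$, and uses Frobenius reciprocity to see that only the finitely many graded pieces $T_\lambda^{(X)}$ with $X \leq Z\boxtimes\overline{Y}$ contribute; on each such fixed $X$ the Pimsner--Popa bound legitimately upgrades $T_\lambda^{(X)}\Omega \to 0$ to $\|T_\lambda^{(X)}\|\to 0$ in operator norm, whence $T_\lambda^{(X)}S\Omega \to 0$ and $P_ZTS\Omega = 0$. This is a componentwise version of your commutant argument that never needs $R_S$ to extend boundedly. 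To salvage your approach you would have to prove the boundedness of right multiplication (a statement of independent interest, but not a consequence of the cited lemma); otherwise you should localize the argument as the paper does.
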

\begin{proof}
    Let $T \in \bbA \bowtie \bbB$ be such that $\bbE(T^*T) = 0$. Let $(T_\lambda)_{\lambda \in \Lambda}$ be a net in $|\bbA \times \bbB |$ converging to $T$ in norm. Write
    $$T_\lambda = \sum_{X \in \Irr(\cC)} T_\lambda^{(X)} ,$$
    with $T_\lambda^{(X)} \in \bbA(X) \odot \bbB(X)$. Observe that $E(T^*T) = 0$ if and only if $T \Omega = 0$. Let $P_X: \cE \to \bbA(X) \otimes \bbB(X)$ be the canonical projection. Then we have
    $$ T_\lambda^{(X)} \Omega = P_X T_\lambda \Omega  \to P_X T \Omega = 0 .$$
    From Lemma \ref{lem:Pimsner-Popaineq},  $T_\lambda^{(X)} \Omega \to 0$ implies $T_\lambda^{(X)} \to 0$ in the operator norm.
    Let $Y,Z \in \Irr(\cC)$. For $S \in \bbA(Y) \odot \bbB(Y)$, $T_\lambda^{(X)} S \Omega$ is supported on the direct summands $\bbA(W) \otimes \bbB(W)$ of $\cE$ for which $W \leq X \boxtimes Y$, meaning $\cC(W,X \boxtimes Y) \neq 0$. Using Frobenius reciprocity in $\cC$, we have then
    \[P_Z T_\lambda S \Omega = \sum_{X \leq Z \boxtimes \overline{Y}} P_Z T_\lambda^{(X)} S \Omega .\]
    Observe that the sum above is over a finite index set. It follows that
    $$P_Z T S \Omega = \lim_\lambda P_Z T_\lambda \Omega = \sum_{X \leq Z \boxtimes \overline{Y}}  \lim_\lambda P_Z T_\lambda^{(X)} S\Omega = 0 .$$
    Since 
    $$T S \Omega = \sum_{Z \in \Irr(\cC)} P_Z TS \Omega ,$$
    it follows that $TS \Omega = 0$. From the density of $\{S \Omega \ | S \in \bbA(Y) \odot \bbD(Y) , \ Y \in \Irr(\cC)\}$ in $\cE$, we conclude that $T = 0$.
\end{proof}

\begin{defn}
    An action of a unitary tensor category on a unital C$^*$-algebra $A$ is a unitary tensor functor $\alpha: \cC \to \fgpBim(A)$. 
\end{defn}

\begin{rem}
    In the above definition, $\alpha: \cC \to \fgpBim(A)$ is said to be an {\em outer action} if it is fully faithful. Such outer actions are of major interest in the literature, but our results can be proven in greater generality.
\end{rem}

Recall that an action $\cC \curvearrowright A$ of a unitary tensor category $\cC$ on a C$^*$-algebra $A$ can in particular be seen as a C$^*$-algebra object in $\cC^{\op}$.

\begin{defn}
\label{def:crossedproduct}
Let $\cC$ be a unitary tensor category and $A$ a unital C$^*$-algebra, and let $\alpha: \cC \to \fgpBim(A)$ be an action. Let $\bbD$ be a C$^*$-algebra object in $\Vec(\cC)$. The crossed product of $A$ by the pair $(\alpha, \bbD)$ is defined to be the coend realization 
\begin{align*}
    A \rtimes \bbD : = \alpha \bowtie \bbD.
\end{align*}   
\end{defn}

By Theorem \ref{thm:canonicalcondexpect}, there is a canonical faithful conditional expectation $\bbE: A \underset{\alpha}{\rtimes} \bbD \to A \otimes \bbD(\bbOne)$.

\subsection{Conditional expectations down to \texorpdfstring{$A$}{A}}

We say that a conditional expectation $E:A \rtimes \bbD \to A$ is compatible with the canonical conditional expectation $\bbE: A \rtimes \bbD \to A \otimes \bbD(\bbOne)$ if it is the composite of $\bbE$ with a conditional expectation $E: A \otimes \bbD(\bbOne) \to A$. The following Lemma will allow us to characterize such conditional expectations by means of states on the ground C$^*$-algebra $\bbD(\bbOne)$.

\begin{lem}
    Let $A$ and $D$ be unital C*-algebras and assume $A$ has trivial center. Then there is a one-to-one correspondence between ucp $A \- A$-bimodular maps $E:A \otimes D \to A$ and states on $D$, where $A$ acts on $A \otimes D$ via the embedding $A \mapsto A \otimes 1_D \subset A \otimes D$.
    \label{lem:correspondencebetweencondexpandstates}
\end{lem}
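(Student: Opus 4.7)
The plan is to construct explicit maps in both directions and verify they are mutual inverses, with the trivial center of $A$ playing the key role in showing the forward map lands where we want.

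Forward direction: given a ucp $A$-$A$-bimodular map $E : A \otimes D \to A$, define $\omega : D \to A$ by $\omega(d) := E(1_A \otimes d)$. The key claim is that $\omega(d)$ lies in $\mathcal{Z}(A) \cong \mathbb{C}$. Indeed, for any $a \in A$, $A$-bimodularity gives
\[
a \cdot \omega(d) = E\bigl((a \otimes 1)(1 \otimes d)\bigr) = E\bigl((1 \otimes d)(a \otimes 1)\bigr) = \omega(d) \cdot a,
\]
using that $a \otimes 1$ and $1 \otimes d$ commute in $A \otimes D$. Thus $\omega$ takes values in $\mathbb{C} \cdot 1_A$, and unitality together with positivity of $E$ yield that $\omega$ is a state on $D$.

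Backward direction: given a state $\omega \in \mathcal{S}(D)$, set $E_\omega := \mathrm{id}_A \otimes \omega : A \otimes D \to A$. This map is well-defined on the minimal tensor product (a slice map for a state is always bounded and completely positive on $A \otimes_{\min} D$), hence ucp, and it is manifestly $A$-$A$-bimodular since it acts as the identity on the $A$-factor.

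Checking inverses: starting from $\omega$, the state associated to $E_\omega$ is $d \mapsto (\mathrm{id}_A \otimes \omega)(1_A \otimes d) = \omega(d)\, 1_A$, which recovers $\omega$ under the identification $\mathbb{C}\cdot 1_A \cong \mathbb{C}$. Conversely, starting from $E$, for any simple tensor one computes, using bimodularity,
\[
E(a \otimes d) = E\bigl((a \otimes 1)(1 \otimes d)\bigr) = a \cdot E(1 \otimes d) = a \cdot \omega(d) = E_\omega(a \otimes d).
\]
Since the simple tensors span a norm-dense $*$-subalgebra of $A \otimes D$ and both $E$ and $E_\omega$ are bounded (in fact contractive as ucp maps), they agree everywhere.

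The only nontrivial step is the centrality argument in the forward direction, which crucially uses $\mathcal{Z}(A) \cong \mathbb{C}$; without this hypothesis the range of $\omega$ would only be a central subalgebra of $A$ and one would obtain a correspondence with $\mathcal{Z}(A)$-valued conditional expectations on $D$ rather than with states. Everything else is formal manipulation of ucp bimodular maps on the minimal tensor product.
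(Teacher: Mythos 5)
Your proof is correct and follows essentially the same route as the paper's: both directions use the slice map $\id_A\otimes\omega$ one way and, the other way, use $A$-bimodularity plus $\cZ(A)\simeq\bbC$ to show $E(\,\cdot\otimes d)$ is a scalar multiple of the identity (the paper phrases this via $\End_{A\-A}(A)\simeq\cZ(A)$, you via directly commuting $E(1\otimes d)$ with $A$, which is the same argument). Your version is slightly more explicit about the density/continuity step in checking the maps are mutual inverses, which the paper dismisses as immediate.
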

\begin{proof}
    Given a state $\omega$ on $D$, the corresponding ucp map is $E_\omega := \id_A \otimes \omega$. Now, if $E:A \otimes D \to A$ is a ucp $A\-A$-bimodular map, for any $d \in D$, the map
    \[ A \ni a \mapsto E_d(a) = E(a \otimes d) \]
    is a bounded $A\-A$-bimodular map $A \to A$. Since $\End_{A\-A}(A) \simeq \cZ(A) \simeq \bbC$, there is $\omega_E: D \to \bbC$ such that 
    \[E_d = \omega_E(d) \id_{A}. \]
    From the fact that $E$ is ucp one deduces that $\omega_E$ is a state.  Indeed, for all $d \in D$,
    \[\omega_E(d^*d)1_A = E_{d^*d}(1_A) = E(1_A \otimes d^*d) \geq 0 ,\]
    and $\omega_E(1_D)1_A = E(1_A \otimes 1_D) = 1$. 
    It is immediate that the constructions $\omega \mapsto E_\omega$ and $E \mapsto \omega_E$ are mutually inverses.
\end{proof}

\begin{prop}\label{prop:ExpectationsVSStates}
    Let $\alpha:\cC \to  \fgpBim(A)$ be an action of a unitary tensor category on a unital C*-algebra $A$ with trivial center. Let $\bbD$ be a C*-algebra object in $\Vec(\cC)$. Then there is an order preserving one-to-one correspondence between conditional expectations $A \rtimes \bbD \to A$ compatible with the canonical conditional expectation $A \rtimes \bbD \to A \otimes \bbD(\bbOne)$ and states on the C*-algebra $\bbD(\bbOne)$.
\end{prop}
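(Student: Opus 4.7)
The plan is to reduce the claim directly to Lemma \ref{lem:correspondencebetweencondexpandstates} via the definition of compatibility. By the definition preceding the proposition, a conditional expectation $E: A \rtimes \bbD \to A$ compatible with $\bbE$ is precisely one of the form $E = E' \circ \bbE$ where $E': A \otimes \bbD(\bbOne) \to A$ is a conditional expectation onto the copy of $A$ sitting as $A \otimes 1_{\bbD(\bbOne)}$. Since $\bbE$ is already fixed and faithful (by Theorem \ref{thm:condexponthecoendrealization}), composition with $\bbE$ yields an injection from conditional expectations $A \otimes \bbD(\bbOne) \to A$ into compatible conditional expectations $A \rtimes \bbD \to A$, and by definition every compatible expectation arises this way. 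Thus the problem reduces to parametrizing conditional expectations $A \otimes \bbD(\bbOne) \to A$.

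Next, I would observe that any conditional expectation onto $A$ is automatically a ucp $A$-$A$-bimodular map (ucp by definition, bimodular by the module property of conditional expectations), and conversely any ucp $A$-$A$-bimodular map $\phi: A \otimes \bbD(\bbOne) \to A$ satisfies $\phi(a \otimes 1) = a \phi(1 \otimes 1) = a$, hence is a conditional expectation onto $A \otimes 1$. Since $\cZ(A) \simeq \bbC$ by hypothesis, Lemma \ref{lem:correspondencebetweencondexpandstates} then supplies the desired bijection
\[
\{\text{states } \omega \text{ on } \bbD(\bbOne)\} \longleftrightarrow \{\text{conditional expectations } A \otimes \bbD(\bbOne) \to A\},
\]
explicitly given by $\omega \mapsto \id_A \otimes \omega$ with inverse $E' \mapsto (E'(1_A \otimes \cdot))$ read off via the scalar identification $\End_{A\-A}(A) \simeq \bbC$.

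Finally, for the order-preservation claim: if $\omega_1 \leq \omega_2$ are states on $\bbD(\bbOne)$, then $(\id_A \otimes \omega_2) - (\id_A \otimes \omega_1) = \id_A \otimes (\omega_2 - \omega_1)$ is completely positive, so the corresponding expectations $E'_{\omega_1} \leq E'_{\omega_2}$ on $A \otimes \bbD(\bbOne)$. Composing with the fixed completely positive map $\bbE: A \rtimes \bbD \to A \otimes \bbD(\bbOne)$ preserves this inequality, giving $E_{\omega_1} \leq E_{\omega_2}$ on $A \rtimes \bbD$; the converse direction is obtained by restricting to $A \otimes \bbD(\bbOne)$, which recovers $E'_{\omega_i}$ since $\bbE$ is the identity on this subalgebra.

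There is no significant obstacle here — essentially all of the analytic work has already been carried out in Lemma \ref{lem:correspondencebetweencondexpandstates} and Theorem \ref{thm:condexponthecoendrealization}, and this proposition is a direct bookkeeping consequence of the trivial-center hypothesis combined with the definition of compatibility.
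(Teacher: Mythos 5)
Your proposal is correct and follows essentially the same route as the paper: the paper's proof likewise reduces the statement to Lemma \ref{lem:correspondencebetweencondexpandstates} via the definition of compatibility and then notes that order preservation is immediate. You simply spell out the bookkeeping (the identification of compatible expectations with expectations $A \otimes \bbD(\bbOne) \to A$, and the ucp/bimodularity check) in more detail than the paper does.
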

\begin{proof}
     By Lemma \ref{lem:correspondencebetweencondexpandstates}, such conditional expectations are in bijections with $\cS(\bbD(\bbOne))$. It is easy to see that if $\omega_1 \leq \omega_2$ in $\cS(\bbD(\bbOne))$, then $E_{\omega_1} \leq E_{\omega_2}$.
\end{proof}

Observe that, in the Proposition above, the conditional expectation $E_\omega: A \rtimes \bbD \to A$ is faithful if and only if the corresponding state $\omega$ on $\bbD(\bbOne)$ is faithful.

\section{Flavors of discreteness}
\label{sec:flavorsofdiscreteness}

\subsection{Discrete, PQR and Ind inclusions}
\label{subsec:discretepqrin}

Let $A$ be a unital  C*-algebra with trivial center. Suppose we are given a unital inclusion $A \overset{E}{\subset}D$ and suppose $E: D \to A$ is a faithful conditional expectation. Let $\cE$ be the right $A$-$A$-correspondence obtained as the completion of $D$ in the $A$-valued inner product determined by $E$. We will use the notation $\cE = \overline{D}^{\| \cdot \|_A}$. The multiplication in $D$ extends to an action of $D$ on $\cE$, denoted by $(d,\zeta)\mapsto d \triangleright \zeta$ for $d \in D$ and $\zeta \in \cE$. Letting $\Omega$ be the unit of $D$ seen as a vector in $\cE$, the canonical embedding $D \to \cE$ is given by $d \mapsto d \triangleright \Omega$.   

Let $K \in \fgpBim(A)$. There is a distinguished class of adjointable $A\-A$-bilinear maps $K \to \cE$, those lying in the diamond space
\[\rCorr_{A\-A}(K,\cE)^{\diamondsuit}:= \{ f \in \rCorr_{A\-A}(K,\cE) \ | \ f[K] \subset D \} . \] 
Given $f \in \rCorr_A(K,\cE)^{\diamondsuit}$, there is a unique $A$-$A$-bimodular map $\check{f}: K \to D$ such that $f(\xi) = \check{f}(\xi) \triangleright \Omega$ for all $\xi \in K$.
The next Theorem, named C$^*$-Frobenius reciprocity, shows that the diamond space functor $\rCorr_{A\-A}((-),D)^\diamondsuit: \fgpBim(A)^{\op} \to \Vec$ has a canonical structure of a C$^*$-algebra object. 

\begin{thm}[\cite{2023arXiv230505072H}, Theorem 2.9]    \label{thm:Frobeniusrecip}
    Let $A \overset{E}{\subset} D$ be as above. Let $\cE = \overline{D}^{\| \cdot\|_A}$ and let $\Omega$ be the image of $1_D$ in $\cE$. For $K \in \fgpBim(A)$, the maps
    \[ \Psi_K: \rCorr_{A\-A}(K,D)^\diamondsuit \leftrightarrow \rCorr_{A \- D} (K \underset{A}{\boxtimes} D,D): \Phi_K , \]
    given by
    \[ \Psi_K(f)( \xi \boxtimes d) := \check{f}(\xi)d \ \ \text{and} \ \  \Phi_K(g)(\xi) := g( \xi \boxtimes 1_D) \triangleright \Omega , \]
    are mutually inverses. They are moreover natural in $K$, and therefore are mutually inverses in $\Vec(\fgpBim(A))$:
    \[\Psi: \rCorr_{A\-A}((\-),D)^\diamondsuit \leftrightarrow \rCorr_{A \- D} ((\-) \underset{A}{\boxtimes} D,D): \Phi\]
\end{thm}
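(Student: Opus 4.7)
The plan is to verify the formulas define maps with the claimed targets, check they are mutually inverse by direct computation, and observe naturality in $K$.

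First I would establish well-definedness. Faithfulness of $E$ makes the canonical inclusion $D \hookrightarrow \cE$, $d \mapsto d \triangleright \Omega$, injective. Hence for $f \in \rCorr_{A\-A}(K,D)^\diamondsuit$ the element $\check f(\xi) \in D$ with $f(\xi) = \check f(\xi) \triangleright \Omega$ is uniquely determined, and inherits $A$-$A$-bimodularity from $f$. The assignment $(\xi,d) \mapsto \check f(\xi)\, d$ is $A$-balanced because $\check f$ is right $A$-linear and $A \subset D$ acts on $D$ by multiplication, so it descends to a well-defined map $\Psi_K(f) : K \underset{A}{\boxtimes} D \to D$, which is manifestly left $A$-linear and right $D$-linear. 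Similarly, $\Phi_K(g)(\xi) = g(\xi \boxtimes 1_D) \triangleright \Omega$ takes values in $D \triangleright \Omega \subset \cE$, hence lies in the diamond subspace; it is $A$-$A$-bimodular from left $A$-linearity of $g$ and from right $D$-linearity of $g$ applied to the right $A \subset D$ action.

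The crux is adjointability. Fix a right Pimsner-Popa basis $\{\xi_i\}_i$ of $K$ over $A$, which exists because $K \in \fgpBim(A)$. For $\Psi_K(f)$ I would propose
\[\Psi_K(f)^*(d) := \sum_i \xi_i \boxtimes \check f(\xi_i)^*\, d,\]
and verify the adjoint identity directly: unpacking $\langle \Psi_K(f)^*(d), \xi \boxtimes d' \rangle_D$ via the standard $D$-valued inner product on $K \underset{A}{\boxtimes} D$ yields $d^* \bigl( \sum_i \check f(\xi_i)\langle \xi_i,\xi\rangle_A \bigr) d'$, which collapses to $d^* \check f(\xi) d' = \langle d, \Psi_K(f)(\xi \boxtimes d')\rangle_D$ by right $A$-linearity of $\check f$ and the reproducing identity of the Pimsner-Popa basis. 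Adjointability of $\Phi_K(g)$ follows from the general fact that any bounded $A$-$A$-bilinear map out of a finitely generated projective bi-Hilbertian correspondence is adjointable, using the dual basis in $\overline K$. Thus both $\Psi_K$ and $\Phi_K$ land in the stated morphism spaces.

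The mutual-inverse relations are then immediate from the formulas: $(\Phi_K \circ \Psi_K)(f)(\xi) = \check f(\xi) \triangleright \Omega = f(\xi)$, while $(\Psi_K \circ \Phi_K)(g)(\xi \boxtimes d) = g(\xi \boxtimes 1_D)\, d = g(\xi \boxtimes d)$ by right $D$-linearity of $g$. Naturality in $K$ is likewise formal: for $\varphi \in \fgpBim(A)(K_1,K_2)$ one reads off $\Psi_{K_1}(f \circ \varphi) = \Psi_{K_2}(f) \circ (\varphi \underset{A}{\boxtimes} \id_D)$ and the analogous identity for $\Phi$, promoting $\Psi$ and $\Phi$ to mutually inverse isomorphisms in $\Vec(\fgpBim(A))$.

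The main obstacle I expect is the adjointability of $\Psi_K(f)$: the verification of the adjoint relation uses the balanced $D$-valued inner product on $K \underset{A}{\boxtimes} D$ together with the Pimsner-Popa reproducing identity, and is where finite projectivity of $K$ over $A$ is essential. Everything else amounts to formal bookkeeping from the definitions.
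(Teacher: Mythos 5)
Your argument is correct and follows essentially the same route as the cited source (the paper itself only imports this statement from \cite{2023arXiv230505072H}, Theorem 2.9, without reproving it): well-definedness of $\check f$ from faithfulness of $E$, adjointability of $\Psi_K(f)$ via a Pimsner--Popa basis and of $\Phi_K(g)$ via finite projectivity of $K$, and direct verification of the inverse relations and naturality. The only point worth making explicit is that finite projectivity also guarantees $K \underset{A}{\boxtimes} D$ coincides with the algebraic balanced tensor product, so your formula for $\Psi_K(f)$ and its proposed adjoint are defined everywhere without a separate boundedness/extension step.
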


The isomorphism $\Psi: \rCorr_{A\-A}((\-),D)^\diamondsuit \simeq \rCorr_{A \- D} ((\-) \underset{A}{\boxtimes} D,D)$ provides a Banach space structure on the diamond space. From the assumption $\cZ(A) \simeq \bbC$, it follows, on the other hand, that $\rCorr_{A\-A}((\-),D)$ has a canonical Hilbert space structure. Generally, the Banach space structure of $\rCorr_{A\-A}((-),\cE)^\diamondsuit$ may not agree with the Hilbert space structure of $\rCorr_{A\-A}((-),\cE)$. This is close in spirit to the distinction between operator norm and $\ell^2$-norm in the GNS-construction for C$^*$-algebras.

\begin{cor}[\cite{2023arXiv230505072H}, Theorem 2.7]
The functor $\rCorr_{A\-A}((\-),D)^\diamondsuit: \cC^{\op} \to \Vec$ has a canonical structure of a C$^*$-algebra object.
\end{cor}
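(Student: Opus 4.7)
The plan is to transport the multiplication, involution, and C*-norm from $D$ (together with the C*-category structure of right $D$-correspondences) to the functor $\bbD := \rCorr_{A\-A}((\-),D)^{\diamondsuit}$, by systematically using the Frobenius reciprocity isomorphism $\Psi$ of Theorem \ref{thm:Frobeniusrecip}.

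First I will define the underlying $*$-algebra object structure. For $K,L \in \fgpBim(A)$, $f \in \bbD(K)$ and $g \in \bbD(L)$, I define $\bbD^{2}_{K,L}(f \odot g) \in \bbD(K \underset{A}{\boxtimes} L)$ to be the element whose underlying $A$-$A$-bimodular map into $D$ sends $\xi \boxtimes \eta$ to $\check{f}(\xi) \check{g}(\eta) \in D$. This is well-defined on the balanced tensor product thanks to right $A$-linearity of $\check f$ and left $A$-linearity of $\check g$; bimodularity and naturality in $K,L$ are routine. Similarly, the $*$-structure $j^{\bbD}_K: \bbD(K) \to \bbD(\bar K)$ is defined by declaring $j^{\bbD}_K(f) \in \bbD(\bar K)$ to be the element whose bimodular map sends $\bar\xi \mapsto \check{f}(\xi)^*$; conjugate linearity of $\xi \mapsto \bar\xi$ makes this complex-linear in $\bar\xi$, and the identities $a \triangleright \bar\xi = \overline{\xi \triangleleft a^*}$ together with $(ad)^* = d^*a^*$ yield $A$-$A$-bimodularity. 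Associativity of $\bbD^2$, unitality, involutivity $j^\bbD_{\bar K} \circ j^\bbD_K = \id$, and monoidality of $j^\bbD$ all reduce to the corresponding identities for multiplication and involution in $D$, combined with the canonical identifications $\bar{\bar K} \simeq K$ and $\overline{K \boxtimes L} \simeq \bar L \boxtimes \bar K$.

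It remains to promote each $\bbD(\bar X \boxtimes X)$ to a genuine C*-algebra. For this I use $\Psi$ to transport the question into $\rCorr_{A \- D}$: combined with the duality data $(R_X, \bar R_X)$ and rigidity of $\fgpBim(A)$, I obtain a chain of isomorphisms
\begin{align*}
    \bbD(\bar X \boxtimes X) \overset{\Psi}{\simeq} \rCorr_{A \- D}(\bar X \underset{A}{\boxtimes} X \underset{A}{\boxtimes} D, D) \simeq \rCorr_{A \- D}(X \underset{A}{\boxtimes} D, X \underset{A}{\boxtimes} D),
\end{align*}
where the second isomorphism is the standard Frobenius adjunction coming from rigidity of $X$ in $\fgpBim(A)$. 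The rightmost term is the endomorphism C*-algebra of the $A$-$D$-correspondence $X \underset{A}{\boxtimes} D$, hence automatically a C*-algebra under the operator norm; transporting this norm back yields the desired C*-norm on $\bbD(\bar X \boxtimes X)$.

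The main obstacle is to verify the compatibility of the algebraic structures, namely that the multiplication and involution constructed above on $\bbD(\bar X \boxtimes X)$ correspond, under the above chain of isomorphisms, to composition and adjunction of endomorphisms in $\rCorr_{A \- D}(X \underset{A}{\boxtimes} D, X \underset{A}{\boxtimes} D)$. This amounts to tracking the duality morphisms $(R_X, \bar R_X)$ through Theorem \ref{thm:Frobeniusrecip} and checking that $\check f(\xi) \check g(\eta)$ matches the composite endomorphism obtained from $f$ and $g$. I expect this to be handled by a direct calculation at the level of elementary tensors or, more cleanly, by graphical calculus in the rigid C*-tensor category $\fgpBim(A)$. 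Once this compatibility is established, the C*-identity for the constructed involution is automatic, concluding the argument.
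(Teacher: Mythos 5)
Your proposal is correct and matches the intended argument: the paper gives no proof here (it cites \cite{2023arXiv230505072H}, Theorem 2.7), and the cited construction is exactly yours, namely defining $\bbD^2$ and $j^\bbD$ via $(f,g)\mapsto \check f(\cdot)\check g(\cdot)$ and $\bar\xi\mapsto\check f(\xi)^*$, and then using C*-Frobenius reciprocity together with rigidity of $X$ to identify $\bbD(\bar X\boxtimes X)$ with the endomorphism C*-algebra $\rCorr_{A\-D}(X\underset{A}{\boxtimes}D, X\underset{A}{\boxtimes}D)$. The only point worth making explicit is that $j^\bbD_K(f)$ lands in the diamond space (i.e.\ is adjointable), which follows either from boundedness of bimodular maps out of fgp bimodules or, more cleanly, by defining $j^\bbD$ as the transport of the adjoint $\Psi_K(f)^*$ through the same Frobenius moves.
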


C$^*$-discreteness will be characterized by the fact that the images of the maps in $\rCorr_{A}(K,\cE)^{\diamondsuit}$ form a dense subspace of $D$ as $K$ runs in $\Irr(A)$. Then C*-Frobenius reciprocity gives a interpretation of the diamond spaces, and hence of discrete inclusions, in terms of categorical data.

Suppose we are given a Hilbert space object $\cH$ in the unitary ind-completion $\Hilb(\fgpBim(A))$ of $\fgpBim(A)$. Consider, for each $K \in \Irr(A)$, the amplified $A\-A$-correspondence $K \otimes \cH(K)$.

\begin{defn}    \label{def:realizationofHilbertspaceobjects}
    The realization $|\cH|$ of $\cH \in \Hilb(\fgpBim(A))$ is the $A\-A$-correspondence
    \[ |\cH| := \bigoplus_{K \in \Irr(A)}^{\ell^2} K \otimes \cH(K) . \]
\end{defn}

\begin{prop}
    Realization of Hilbert space objects extends to a fully-faithful unitary tensor functor $|-|: \Hilb(\fgpBim(A)) \to \rCorr_{A\-A}$, realizing $\Hilb(\fgpBim(A))$ as a full subcategory of $\rCorr_{A\-A}$.
    \label{prop:realizationofHilbertspaceobjectsfunctor}
\end{prop}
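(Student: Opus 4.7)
The plan is to build the functor $|-|$ in two steps, then verify the three claims (functor, fully-faithful, unitary tensor) separately. On objects, $|-|$ is given by Definition \ref{def:realizationofHilbertspaceobjects}. On morphisms, given $\eta = \{\eta_K\}_{K \in \Irr(A)} \in \Hilb(\fgpBim(A))(\cH_1,\cH_2)$, we set
\[
|\eta| := \bigoplus_{K \in \Irr(A)} \id_K \otimes \eta_K,
\]
which is a bounded adjointable $A\-A$-bimodular map on the $\ell^2$-direct sums because $\sup_K \|\eta_K\| < \infty$, with adjoint $|\eta^*|$ built from the pointwise adjoints. Functoriality and $*$-preservation are then immediate from the componentwise definitions.

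For fully-faithfulness I would invoke Schur's lemma inside $\fgpBim(A)$: for $K,L \in \Irr(A)$ with $K \not\simeq L$, $\rCorr_{A\-A}(K,L) = 0$, and $\rCorr_{A\-A}(K,K) \simeq \bbC$. Given any $T \in \rCorr_{A\-A}(|\cH_1|, |\cH_2|)$, the blockwise components $P_L T P_K : K \otimes \cH_1(K) \to L \otimes \cH_2(L)$ (where $P_K, P_L$ are the canonical projections) vanish unless $K \simeq L$, and for $K = L$ they must be of the form $\id_K \otimes \eta_K$ for a unique $\eta_K \in \cL(\cH_1(K),\cH_2(K))$. Uniform boundedness $\sup_K \|\eta_K\| \leq \|T\|$ follows from orthogonality of the isotypic components, so $T = |\eta|$ for the family $\eta = \{\eta_K\}$. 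This shows $|-|$ restricts to an isomorphism on morphism spaces, matching the $\ell^\infty$-product description of $\Hilb(\fgpBim(A))$-morphisms recorded in Section 2.

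For the tensor structure I would exhibit a natural unitary
\[
|-|^2_{\cH_1,\cH_2} : |\cH_1| \underset{A}{\boxtimes} |\cH_2| \xrightarrow{\ \sim\ } |\cH_1 \boxtimes \cH_2|.
\]
Unpacking, the left-hand side is $\bigoplus_{Y,Z}^{\ell^2}\ (Y \underset{A}{\boxtimes} Z) \otimes \cH_1(Y) \otimes \cH_2(Z)$, and decomposing each Connes fusion $Y \underset{A}{\boxtimes} Z \simeq \bigoplus_{X \in \Irr(A)} X \otimes \rCorr_{A\-A}(X, Y \boxtimes Z)$ by semisimplicity identifies this with $\bigoplus_X X \otimes \bigl(\bigoplus_{Y,Z} \rCorr_{A\-A}(X, Y \boxtimes Z) \otimes \cH_1(Y) \otimes \cH_2(Z)\bigr)$, which is exactly $|\cH_1 \boxtimes \cH_2|$ by the definition of $\boxtimes$ on $\Hilb(\fgpBim(A))$ recalled in Section 2. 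I would then check pentagon and triangle coherence by tracking the natural isomorphisms through Connes fusion, which reduce to the corresponding coherences already built into $\Hilb(\fgpBim(A))$.

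The main obstacle I anticipate is ensuring that the isomorphism $|-|^2$ is isometric. The inner product on the $(Y,Z)$-summand of $(\cH_1 \boxtimes \cH_2)(X)$ is rescaled by $(d_Y d_Z)^{-1}$, whereas the Connes fusion inner product on $Y \underset{A}{\boxtimes} Z$ carries its own normalization dictated by the standard solutions $(R_Y,\bar R_Y)$, $(R_Z,\bar R_Z)$ to the conjugate equations in $\fgpBim(A)$. The verification amounts to showing these two renormalizations cancel: concretely, one computes the $A$-valued inner product of two elementary tensors $(y \boxtimes z) \otimes \xi_1 \otimes \xi_2$ on both sides using a Pimsner-Popa basis of $Z$ and the semisimple decomposition of $Y \boxtimes Z$, then uses that the standard-solution normalization built into $d_Y, d_Z$ is precisely what the rescaling on $\Hilb(\fgpBim(A))$ was designed to compensate for (cf. \cite{JP17}, Section 2). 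Once this bookkeeping is done, $|-|^2$ is unitary and the proposition follows.
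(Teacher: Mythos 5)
Your proposal follows essentially the same route as the paper: the componentwise definition $|\eta| = \bigoplus_K \id_K \otimes \eta_K$ on morphisms, fully-faithfulness via the $\ell^\infty$-product/Schur-lemma decomposition of $\rCorr_{A\-A}(|\cH_1|,|\cH_2|)$, and the tensor structure via decomposing $Y \underset{A}{\boxtimes} Z$ into irreducibles to match the definition of $\boxtimes$ on $\Hilb(\fgpBim(A))$. If anything, you are more careful than the paper on the one delicate point---verifying that the $(d_Y d_Z)^{-1}$ rescaling built into the ind-completion cancels against the Connes-fusion normalization so that $|-|^2$ is actually isometric---which the paper's proof simply asserts by declaring all isomorphisms ``canonical and unitary.''
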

\begin{proof}
    If $f: \cH_1 \to \cH_2$ is a morphism of Hilbert space objects, the morphism $|f|: |\cH_1| \to |\cH_2|$ given on the $K$-th component by $\id_K \otimes f_K$ defines the action of $|-|$ on morphisms. We have thus a functor $\Hilb(\fgpBim(A)) \to \rCorr_{A\-A}$.

    Let us show that this functor is a unitary tensor functor. Given Hilbert space objects $\cH_1$ and $\cH_2$, we have
    \begin{align*}
        |\cH_1 \boxtimes \cH_2| &= \bigoplus_{K \in \Irr(A)}^{\ell^2} K \otimes (\cH_1 \boxtimes \cH_2)(K) \\
        &= \bigoplus_{K \in \Irr(A)}^{\ell^2} K \otimes \left( \bigoplus_{K_1,K_2 \in \Irr(A)}^{\ell^2} \rCorr_{A\-A}(K,K_1 \underset{A}{\boxtimes} K_2) \otimes \cH_1(K_1) \otimes \cH_2(K_2) \right) \\
        & \simeq \bigoplus_{K,K_1,K_2 \in \Irr(A)}^{\ell^2} \rCorr_{A\-A}(K,K_1 \underset{A}{\boxtimes} K_2) \otimes K \otimes \cH_1(K_1) \otimes \cH_2(K_2) \\
        &\simeq \bigoplus_{K_1,K_2 \in \Irr(A)}^{\ell^2} K_1 \underset{A}{\boxtimes} K_2 \otimes \cH_1(K_1) \otimes \cH_2(K_2) \simeq |\cH_1| \underset{A}{\boxtimes} |\cH_2| ,
    \end{align*}
    and all isomorphisms are canonical and unitary, providing $|-|$ with a unitary tensor structure, associativity being straightforward to check.

    It remains to show that $|-|$ is fully-faithful. We compute
    \begin{align*}
        \rCorr_{A\-A}(|\cH_1|, |\cH_2|) &= \rCorr_{A\-A}\left(\bigoplus_{K_1 \in \Irr(A)}^{\ell^2} K_1 \otimes \cH_1(K_1),\bigoplus_{K_2 \in \Irr(A)}^{\ell^2} K_2 \otimes \cH_2(K_2)\right) \\
        & \simeq \prod_{K \in \Irr(A)}^{\ell^\infty}\rCorr_{A\-A}\left( K \otimes \cH_1(K), K \otimes \cH_2(K) \right) \\
        & \simeq \prod_{K \in \Irr(A)}^{\ell^\infty} \bbB(\cH_1(K),\cH_2(K)) = \Hilb(\fgpBim(A))(\cH_1,\cH_2) . 
    \end{align*}
\end{proof}

From the data $A \overset{E}{\subset} D$, we can extract three  ind-objects of the unitary tensor category $\fgpBim(A)$.

\begin{defn}
    The Hilbert space object $\cH_\cE$ associated to $A \overset{E}{\subset} D$ is given by
    \[ \cH_\cE(K) := \rCorr_{A\-A}(K, \cE) . \]

    The C*-algebra object $\bbD$ associated to $A \overset{E}{\subset} D$ with faithful expectation $E$ is given by
    \[ \bbD(K) := \rCorr_{A\-A} (K, \cE)^{\diamondsuit} . \]
\end{defn}

Canonically, we have $\bbD(K) \subset \cH_\cE(K)$ for all $K \in \fgpBim(A)$.

The third ind-object we consider is the fiberwise closure of $\bbD$ inside $\cH_\cE$. It can also be characterized as a GNS-construction by means of the Lemma below.

\begin{lem}
    Let $A \overset{E}{\subset} D$ and $\bbD$ be as above. Then the conditional expectation $E$ induces a canonical faithful state on the C*-algebra object $\bbD$.
\end{lem}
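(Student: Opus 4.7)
The plan is to realize $\omega$ as a $*$-natural transformation $\omega\colon \bbD \to \mathbf{1}$ into the tensor-unit C*-algebra object with $\mathbf{1}(K) := \rCorr_{A\-A}(K,A)$, whose $K$-component is
\[ \omega_K(f) := E \circ \check{f}, \qquad f \in \bbD(K) = \rCorr_{A\-A}(K,\cE)^{\diamondsuit}, \]
and then to check faithfulness on the ground fiber. Here $\check{f}\colon K\to D$ is the $A$-$A$-bimodular lift of $f$ afforded by Theorem \ref{thm:Frobeniusrecip}. Since $E\colon D\to A$ is ucp and bimodular, $\omega_K(f)$ lies in $\mathbf{1}(K)$, and naturality in $K$ follows from the observation that the lift of $f\circ g$ equals $\check{f}\circ g$ for any morphism $g\colon K'\to K$ in $\fgpBim(A)$.

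Next I would identify $\bbD(\bbOne) \simeq A'\cap D$ via $f\mapsto \check{f}(1_A)$, since $A$-bimodularity forces the image to land in the relative commutant. For $d \in A'\cap D$ and $a\in A$, the computation $aE(d) = E(ad) = E(da) = E(d)a$ shows $E(d) \in \cZ(A)\simeq \bbC$, so $\omega_\bbOne$ is identified with the restriction $E|_{A'\cap D}\colon A'\cap D \to \bbC$. This is a unital positive linear functional; faithfulness is inherited from that of $E$, since $\omega_\bbOne(d^*d) = E(d^*d) = 0$ forces $d^*d = 0$.

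I would then verify the remaining axioms of Definition \ref{defn:categoricalucpmaps}. The $*$-naturality identity $j^{\mathbf{1}}\circ\omega = \omega\circ j^{\bbD}$ reduces to $E$ being $*$-preserving together with the compatibility of the dagger structure on $\bbD$ with that on $\rCorr_{A\-A}$. For fiberwise positivity on each $\bbD(\overline{X}\boxtimes X)$, unwinding $\bbE^\bbD_X = d_X^{-1}\bbD(R_X)$ yields the compatibility
\[ \omega_{\overline{X}\boxtimes X} = \omega_\bbOne \circ \bbE^\bbD_X, \]
so each fiber state is a composition of a conditional expectation with a state, hence itself a state on $\bbD(\overline{X}\boxtimes X)$. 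Together with the finite Pimsner-Popa bounds from Lemma \ref{lemma:PPindexforcategoricalcondexp}, faithfulness propagates from $\omega_\bbOne$ to every fiber.

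The main technical obstacle I anticipate is confirming multiplicative compatibility: one must check that the C*-algebra object multiplication $\bbD^2_{X,Y}$, translated through Theorem \ref{thm:Frobeniusrecip} into the $\diamondsuit$-picture, coincides with composition of bimodular lifts inside $D$. Once this is unpacked the state axioms follow routinely, and the resulting $\omega$ is canonical by construction.
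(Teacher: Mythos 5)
Your proof is correct and follows essentially the same route as the paper: both arguments reduce to the observation that, since $\cZ(A)\simeq\bbC$, the restriction of $E$ to (a copy of) $\bbD(\bbOne)$ inside $D$ is scalar-valued, hence a state, whose faithfulness is inherited from that of $E$. The paper phrases this via its correspondence between $A$-$A$-bimodular ucp maps $A\otimes\bbD(\bbOne)\to A$ and states on $\bbD(\bbOne)$ (Lemma \ref{lem:correspondencebetweencondexpandstates}), whereas you identify $\bbD(\bbOne)\simeq A'\cap D$ directly; the fiberwise verifications you add, including the multiplicative compatibility you flag, are left implicit in the paper and are supplied by C*-Frobenius reciprocity (Theorem \ref{thm:Frobeniusrecip} and its corollary).
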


\begin{proof}
    We have the $*$-algebra inclusions
    \[ A \subset A \otimes \bbD(\bbOne) \subset \bigoplus_{K \in \Irr(A)} K \odot \bbD(K) \subset D. \]
    The restriction of the conditional expectation $E: D \to A$ to a conditional expectation $A \otimes \bbD(\bbOne) \to A$ is given by $(\id_A \otimes \omega)$ for a unique state $\omega$ on $\bbD(\bbOne)$ (Lemma \ref{lem:correspondencebetweencondexpandstates}).
\end{proof}

\begin{defn}[\cite{JP17}, Section 4.5.]
The Hilbert space object $L^2_\omega \bbD$ is the GNS-construction  associated to the state $\omega = \omega_E$ on $\bbD$ induced by the conditional expectation $E: D \to A$.
\label{defn:GNSHilbertspace}
\end{defn}

The fibers of $L^2_{\omega}\bbD$ are described as follows. Recall that, for every $K \in \fgpBim(A)$, $\bbD(K)$ is a $\bbD(\bbOne)\-\bbD(\bbOne)$-correspondence. Composing the right $\bbD(\bbOne)$-valued inner product with $\omega$ endows $\bbD(K)$ with a pre-Hilbert space structure, and $L^2_{\omega}\bbD(K)$ is the completion of this pre-Hilbert space. By applying the realization in Definition \ref{def:realizationofHilbertspaceobjects}, we obtain an $A$-$A$-correspondence $|L^2_\omega \bbD|$.

\begin{defn}    \label{def:notionsofregularity}
    Let $A$ and $D$ be unital C*-algebras. Assume in addition that $A$ has trivial center. A faithful inclusion $A \overset{E}{\subset} D$ is said to be
    \begin{itemize}
        \item [(1)] C$^*$-Discrete if $A \rtimes \bbD \simeq D$; 
        
        \item [(2)] Projective Quasi-Regular (PQR) if $\cE \simeq |L^2_\omega \bbD|$;
        \item [(3)] an Ind-Inclusion if $\cE \simeq |\cH_\cE|$.
    \end{itemize}
\end{defn}

For each $K \in \Irr(A)$, by C$^*$-Frobenius reciprocity \ref{thm:Frobeniusrecip}, 
\[\rCorr_{A\-A}(K,\cE)^\diamondsuit \simeq \rCorr_{A\-D}(K \underset{A}{\boxtimes} D, D), \]
naturally in the variable $K$. Using this natural identification, we shall regard $\rCorr_{A\-A}(K,\cE)^\diamondsuit$ as a Banach space. On the other hand, $\cH_\cE(K) = \rCorr_{A\-A}(K,\cE)$ is a Hilbert space, the inner-product given by $\< f, g \>_{\cH_\cE(K)} = f^* g$, and $\rCorr_{A\-A}(K,\cE)^\diamondsuit \subset \rCorr_{A\-A}(K,\cE)$ as a linear subspace.

\begin{lem}
    The closure of $\bbD(K) = \rCorr_{A\-A}(K,\cE)^\diamondsuit$ in the Hilbert space $\cH_\cE(K)$ coincides with $L^2_\omega\bbD(K)$.
\end{lem}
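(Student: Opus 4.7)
The plan is to show that the two natural inner products on $\bbD(K) \subset \cH_\cE(K)$ agree: the GNS inner product $\langle f, g\rangle_\omega := \omega(\langle f,g\rangle_{\bbD(\bbOne)})$ coming from the C*-algebra object structure, and the Hilbert space one $\langle f, g\rangle_{\cH_\cE(K)} = f^*g$ inherited from $\rCorr_{A\-A}(K,\cE)$. Once we have this equality on the nose, the universal property of Hilbert space completions identifies $L^2_\omega\bbD(K)$ with the norm-closure of $\bbD(K)$ inside $\cH_\cE(K)$, which is exactly the content of the lemma.

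First I would reduce to the case $K \in \Irr(\fgpBim(A))$, where $\End_{A\-A}(K) \simeq \bbC$ so that $f^*g$ is automatically a scalar; the general case follows by decomposing into irreducibles and using the naturality of both sides together with their compatibility with finite direct sums.

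For irreducible $K$ and $f, g \in \bbD(K)$, the heart of the matter is to unpack both sides concretely. On the categorical side,
\[
\langle f,g\rangle_{\bbD(\bbOne)} = d_K^{-1}\,\bbD(R_K)\bigl(\bbD^2_{\overline{K},K}(j^\bbD_K(f) \odot g)\bigr),
\]
and through the description of $\bbD$ given by C*-Frobenius reciprocity (Theorem~\ref{thm:Frobeniusrecip}), one reads off: (i) $\bbD^2$ is the composition of bimodular maps into $D$, i.e.\ ordinary multiplication in $D$; (ii) $j^\bbD_K$ is the adjoint operation induced by the $*$-structure of $D$; and (iii) precomposition with $R_K$ contracts against a right Pimsner–Popa basis $\{\xi_i\}$ of $K$. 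The net effect is that $\langle f,g\rangle_{\bbD(\bbOne)}(1_A) = d_K^{-1}\sum_i f(\xi_i)^* g(\xi_i) \in D \cap A' = \bbD(\bbOne)$. Applying $\omega$, which by Lemma~\ref{lem:correspondencebetweencondexpandstates} is the restriction of $E$ to $D \cap A'$, yields the scalar $d_K^{-1}\sum_i E(f(\xi_i)^* g(\xi_i))$. On the Hilbert space side, the scalar $f^*g$ is determined by $f^*g \cdot \mathrm{id}_K = f^*g$; expanding $f^*g(\xi_k)$ in the same basis and using $\langle \cdot,\cdot\rangle_\cE = E(\cdot^* \cdot)$ together with the standard formula for the categorical trace of an endomorphism of $K$ produces precisely the same scalar.

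The main obstacle I anticipate is bookkeeping of the normalizations: the factor $d_K^{-1}$ coming from $\bbE^\bbD_K = d_K^{-1}\bbD(R_K)$ in Definition~\ref{defi:groundC*algebra} has to line up exactly with the trace normalization that is implicit in viewing $f^*g \in \End_{A\-A}(K) \simeq \bbC$ as a scalar, and this requires invoking the standard solution $(R_K, \overline{R}_K)$ of the conjugate equations as set up for $\fgpBim(A)$. Once the two inner products are shown to be equal on $\bbD(K)$, the identification $L^2_\omega\bbD(K) = \overline{\bbD(K)}^{\cH_\cE(K)}$ is immediate from the universal property of Hilbert space completion.
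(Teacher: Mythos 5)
Your proposal is correct and follows essentially the same strategy as the paper's proof: both arguments establish that on $\bbD(K)$ the GNS inner product $\omega(\langle f,g\rangle_{\bbD(\bbOne)})$ coincides with the Hilbert space inner product $f^*g$ of $\cH_\cE(K)$, after which the identification of $L^2_\omega\bbD(K)$ with the closure of $\bbD(K)$ in $\cH_\cE(K)$ is immediate. The only real difference is in how the isometry is computed: the paper evaluates $\langle f(\xi),f(\xi)\rangle_\cE = E\bigl(\check{f}(\xi)^*\check{f}(\xi)\bigr)$ at a single vector $\xi\in K$ and identifies $\check{f}(\xi)^*\check{f}(\xi)$ with the product $\bigl(\bar{\xi}\odot j^{\bbD}(\check{f})\bigr)\cdot\bigl(\xi\odot\check{f}\bigr)$ in $A\rtimes\bbD$, so the $\xi$-dependence factors out as $\|\xi\|^2$ on both sides and no normalization constants ever appear; your version contracts against a full Pimsner--Popa basis and compares with the categorical trace, which does work but forces exactly the bookkeeping you flag as an ``anticipated obstacle'' --- one must take the basis arising from a standard solution, so that it is simultaneously a left and right basis with $\sum_i\langle\xi_i,\xi_i\rangle_A=d_K\,1_A$, in order for the $d_K^{-1}$ in $\bbE^{\bbD}_K=d_K^{-1}\bbD(R_K)$ to cancel against the $d_K$ coming from the trace. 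That check does go through, so there is no gap, but you should carry it out explicitly rather than leave it as an anticipated difficulty; the paper's single-vector formulation is the cleaner way to sidestep it entirely.
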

\begin{proof}
    Given $f \in \bbD(K)$, its $\cH_\cE(K)$-norm $\| f\|_{\cH_\cE(K)}$ is defined by
    \[  f^*f(\xi) =  \| f\|_{\cH_\cE(K)}^2 \xi \qquad  \forall \ \xi \in K, \]
    or equivalently $\< f(\xi),f(\xi)\> = \| f\|_{\cH_\cE(K)}^2 \| \xi \|^2$ for all $\xi \in K$. But
    \begin{align*}
        \< f(\xi),f(\xi)\> & = \< \check{f}(\xi) \Omega, \check{f}(\xi) \Omega \> \\
        & = \< (\check{f}(\xi) )^* \check{f}(\xi) \Omega, \Omega \>
        \\
        & = E ( (\check{f}(\xi) )^* \check{f}(\xi)) \\
        & = (\id \otimes \omega_E)\bbE^\bbD (\check{f}(\xi) )^* \check{f}(\xi)). 
    \end{align*}
    In the identification of $A \rtimes \bbD$ with a C$^*$-subalgebra of $D$, the element $\check{f}(\xi) )^* \check{f}(\xi)$ corresponds to 
    \[ \left( \bar{\xi} \odot j^\bbD(\check{f}) \right) \cdot \left(\xi \odot \check{f} \right), \]
    and therefore
    \[ (\id \otimes \omega_E)\bbE^\bbD (\check{f}(\xi) )^* \check{f}(\xi)) = \| \xi \|^2 \omega_E(\bbE^\bbD(j^\bbD(\check{f}) \cdot \check{f})) , \]
    where we are writing $\bbD^2(j^\bbD(\check{f}) \odot \check{f}) = j^\bbD(\check{f}) \cdot \check{f}$ for short. Since this must hold for every $\xi \in K$, it follows that
    \[ \| f\|_{\cH_\cE(K)}^2 = \omega_E(\bbE^\bbD(j^\bbD(\check{f}) \cdot \check{f})) . \]
\end{proof}

\begin{prop}
In Definition \ref{def:notionsofregularity}, $(1)$ implies $(2),$ and $(2)$ implies $(3)$. 
\end{prop}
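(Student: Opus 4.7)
The plan is to directly compare the three constructions fiberwise, using the chain of natural inclusions
\[ \bbD(K) \;\subseteq\; L^2_\omega\bbD(K) \;\subseteq\; \cH_\cE(K) \]
for each $K \in \Irr(A)$, together with the preceding lemma identifying the closure of $\bbD(K)$ inside $\cH_\cE(K)$ as $L^2_\omega \bbD(K)$, and the fact from Proposition \ref{prop:realizationofHilbertspaceobjectsfunctor} that the realization functor $|-|: \Hilb(\fgpBim(A)) \to \rCorr_{A\-A}$ is fully faithful.

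For the implication $(1) \Rightarrow (2)$, I would assume $A \rtimes \bbD \simeq D$ compatibly with the canonical inclusion of $A$, so that $E = (\id_A \otimes \omega_E) \circ \bbE$. By construction of the coend realization in Section \ref{sec:generalrealization}, the $*$-subalgebra $\bigoplus_{K \in \Irr(A)} K \odot \bbD(K)$ is dense in $D$ for the operator norm and hence \emph{a fortiori} dense in $\cE$ for the weaker norm $\|\cdot\|_A$. The next step is to show that this sum is already orthogonal in the $A$-valued inner product: if $\xi \odot \zeta \in K \odot \bbD(K)$ and $\xi' \odot \zeta' \in K'\odot \bbD(K')$ with $K \not\simeq K'$, their $A$-valued pairing is controlled by $\bbE$ of an element in the $(\overline{K}\boxtimes K')$-fiber of $\alpha \bowtie \bbD$, which vanishes because $\rCorr_{A\-A}(\bbOne,\overline{K}\boxtimes K')=0$ by semisimplicity. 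On each isotypic component, the pairing reduces to $\langle\xi,\xi'\rangle_A \cdot \omega_E(\langle\zeta,\zeta'\rangle_{\bbD(\bbOne)})$, and completing then gives $K \otimes L^2_\omega\bbD(K)$ by the preceding lemma. Assembling the $\ell^2$-sum yields $\cE \simeq |L^2_\omega\bbD|$.

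For the implication $(2) \Rightarrow (3)$, assume $\cE \simeq |L^2_\omega \bbD|$. Since $|-|$ is fully faithful, for each $K \in \Irr(A)$ I would compute
\[ \cH_\cE(K) \;=\; \rCorr_{A\-A}(K,\cE) \;\simeq\; \rCorr_{A\-A}\bigl(K,\,|L^2_\omega\bbD|\bigr) \;\simeq\; L^2_\omega\bbD(K). \]
This identifies $\cH_\cE \simeq L^2_\omega\bbD$ as Hilbert space objects in $\Hilb(\fgpBim(A))$. Applying $|-|$ once more produces $|\cH_\cE| \simeq |L^2_\omega\bbD| \simeq \cE$, giving $(3)$.

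The main obstacle is the orthogonality claim in $(1) \Rightarrow (2)$: though each isotypic summand plainly closes to $K \otimes L^2_\omega\bbD(K)$, one must verify that the direct sum is literally orthogonal for the $A$-valued inner product. This is the categorical analogue of a ``Fourier support'' argument and rests on $\rCorr_{A\-A}(\bbOne,\overline{K}\boxtimes K')=0$ for non-isomorphic irreducibles, combined with the factorization $E = (\id_A \otimes \omega_E)\circ \bbE$ provided by Proposition \ref{prop:ExpectationsVSStates}. Everything else is functorial bookkeeping.
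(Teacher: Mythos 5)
Your proposal is correct and follows essentially the same route as the paper: for $(1)\Rightarrow(2)$ the paper likewise identifies the closure of $A\rtimes\bbD$ inside $\cE$ with the image of $|L^2_\omega\bbD|$ (your orthogonality-of-isotypic-components plus fiberwise-completion argument is exactly what that one-line claim compresses, resting on the preceding lemma computing $\|f\|_{\cH_\cE(K)}$), and for $(2)\Rightarrow(3)$ the paper simply notes $L^2_\omega\bbD$ is already a Hilbert space object, which your full-faithfulness computation of $\cH_\cE(K)\simeq L^2_\omega\bbD(K)$ makes precise. No gaps; you have merely expanded the paper's terse proof.
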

\begin{proof}
    The closure of $A \rtimes \bbD \subset D$ inside $\cE$ coincides with the image of $|L^2_\omega(\bbD)|$ in $\cE$. Since $D$ embeds densely in $\cE$, $A \rtimes \bbD \simeq D$ implies $|L^2_\omega(\bbD)| \simeq \cE$. This shows $(1) \implies (2)$. It is obvious that $(2) \implies (3)$, since $L^2_\omega \bbD$ is a Hilbert space object.
\end{proof}

\begin{rem}
    If $A \overset{E}{\subset} D$ is an irreducible inclusion, i.e., $A' \cap D \simeq \bbC$, then Projective Quasi Regularity and C$^*$-discreteness agree with definitions 2.3 and 3.8 given in \cite{2023arXiv230505072H}.
\end{rem}

\subsection{Characterization of C*-discrete inclusions}
\label{subsec:reconstruction}

In the spirit of categorical duality for compact quantum group actions \cite{MR3204665} and of subfactor reconstruction, in this Section we characterize C$^*$-discrete inclusions by means of categorical data.

\begin{defn}
Given a unitary tensor category $\cC$, let $\rCorr_\Omega(\Vec(\cC))$ be the category whose objects are pairs $(\bbD,\omega)$, where $\bbD$ is a C*-algebra object and $\omega$ is a faithful state on $\bbD$ and whose morphisms are state-preserving ucp maps $\theta: (\bbD_1,\omega_1) \to (\bbD_2,\omega_2)$ is a morphism iff $\theta$ is a ucp map $\bbD_1 \to \bbD_2$ and $\omega_2 \circ \theta = \omega_1$.
\end{defn}
\begin{defn}
Given a unital C$^*$-algebra $A$ with trivial center, let $\CDisc(A)$ denote the category whose objects are discrete inclusions $A \overset{E}{\subset} D$. Morphisms between objects $(A \subset D_1, E_1)$ and $(A \subset D_2, E_2)$ are $A$-$A$-bimodular ucp-maps $\phi: D_1 \to D_2$ mapping the conditional expectation $E_1$ to $E_2$, i.e., $E_2 \circ \phi = E_1$.
\end{defn}

\begin{rem}
In Section \ref{sec:examples}, the symbol $\Omega$ will also be used to denote cyclic vectors for correspondences. Below, a state on a C$^*$-algebra object will allow for a {\em realization} of the abstract categorical data as concrete operator algebraic data. This is analogous to the GNS-construction, justifying the overuse of the symbol $\Omega$.
\end{rem}

The following theorem is an extension of \cite[Theorem 4.3]{2023arXiv230505072H} to non-connected C*-algebra objects.

\begin{thm}[Theorem \ref{thmalpha: thereconstructiontheorem}]\label{thm: thereconstructiontheorem}
    There is an equivalence
    \[ \rCorr_\Omega(\Vec(\fgpBim(A))) \simeq \CDisc(A). \]
\end{thm}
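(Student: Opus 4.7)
The plan is to construct mutually quasi-inverse functors
\[ F : \rCorr_\Omega(\Vec(\fgpBim(A))) \rightleftarrows \CDisc(A) : G. \]
On objects, set $F(\bbD,\omega) := (A \subset A \rtimes \bbD, E_\omega)$, where $A \rtimes \bbD$ is the coend realization of Definition \ref{def:crossedproduct} taken against the identity action $\id : \fgpBim(A) \to \fgpBim(A)$, and $E_\omega := (\id_A \otimes \omega)\circ \bbE$ is the compatible conditional expectation from Proposition \ref{prop:ExpectationsVSStates}. Faithfulness of $E_\omega$ follows from that of $\omega$ and of $\bbE$ (Theorem \ref{thm:condexponthecoendrealization}), and discreteness is immediate because $\bigoplus_{X \in \Irr(\fgpBim(A))} X \odot \bbD(X)$ is already a norm-dense $*$-subalgebra manifestly built from irreducible finitely generated projective $A$-$A$-sub-bimodules. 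In the other direction, $G(A \overset{E}{\subset} D) := (\bbD_D, \omega_E)$, with $\bbD_D(K) := \rCorr_{A\-A}(K,\cE)^\diamondsuit$ equipped with the C*-algebra object structure of Theorem \ref{thm:Frobeniusrecip}, and $\omega_E$ the faithful state on $\bbD_D(\bbOne)$ determined by Lemma \ref{lem:correspondencebetweencondexpandstates}. On morphisms, a state-preserving ucp transformation $\theta:(\bbD_1,\omega_1)\to(\bbD_2,\omega_2)$ yields $F(\theta):A\rtimes\bbD_1 \to A\rtimes\bbD_2$ by extending the fiberwise maps $\id \odot \theta_X$ from the dense algebraic coend; conversely, an expectation-preserving $A$-$A$-bimodular ucp $\phi:D_1 \to D_2$ extends to a contraction $\tilde\phi:\cE_1 \to \cE_2$ between correspondence completions (the ucp estimate $\|\phi(d)\|_A^2 \leq \|\phi(d^*d)\|_A = \|d\|_A^2$), and postcomposition $f \mapsto \tilde\phi \circ f$ defines $G(\phi):\bbD_{D_1}\to\bbD_{D_2}$.

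For $F \circ G \cong \id$, Definition \ref{def:notionsofregularity}(1) gives $A \rtimes \bbD_D \simeq D$ canonically, and under this identification $E_{\omega_E}$ coincides with $E$ directly from the construction of $\omega_E$ via Lemma \ref{lem:correspondencebetweencondexpandstates}. For $G \circ F \cong \id$, starting from $(\bbD,\omega)$ with completion $\cE$, I define for each $X \in \Irr(\fgpBim(A))$
\[ \Psi_X : \bbD(X) \longrightarrow \bbD_{A \rtimes \bbD}(X) = \rCorr_{A\-A}(X,\cE)^\diamondsuit, \qquad d \longmapsto \bigl[\xi \mapsto \xi \odot d \bigr], \]
which respects the C*-algebra object structures because the diamond multiplication of Theorem \ref{thm:Frobeniusrecip} pulls back to $\bbD^2$ on fibers. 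Lemma \ref{lem:Pimsner-Popaineq} applied with $\bbA = \id$ and $\bbB = \bbD$ shows that $\Psi_X$ is isometric on fibers. Surjectivity uses the orthogonal decomposition $\cE \simeq \bigoplus^{\ell^2}_{X \in \Irr(\fgpBim(A))} X \otimes L^2_\omega \bbD(X)$ obtained from Theorem \ref{thm:canonicalcondexpect}: any $f \in \rCorr_{A\-A}(X,\cE)^\diamondsuit$ corresponds via C*-Frobenius reciprocity to $\check f \in A \rtimes \bbD$ satisfying $f(\xi) = \check f \cdot \xi \triangleright \Omega$, and approximating $\check f$ by elements of the algebraic coend then projecting onto the $X$-isotypic summand retracts the sequence to $X \odot \bbD(X)$ and converges in operator norm to a unique preimage in $\bbD(X)$. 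That $\omega_{E_\omega} = \omega$ is immediate.

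The main obstacle is surjectivity of $\Psi_X$: one must distinguish the $L^2$-closure of $\bbD(X)$ inside $\cH_\cE(X)$ (which would only give Projective Quasi-Regularity, in the sense of Definition \ref{def:notionsofregularity}(2)) from its diamond-norm closure, which is what characterizes C*-discreteness. The bridge between the two is the finite Pimsner-Popa index of $\bbE_X^\bbD$ from Lemma \ref{lemma:PPindexforcategoricalcondexp}, which forces the operator norm and the $\bbD(\bbOne)$-module norm on $\bbD(X)$ to be equivalent, so that the $L^2$-approximation of $\check f$ coming from density in $\cE$ automatically upgrades to an operator-norm approximation compatible with the diamond condition, thereby pinning $f$ to lie in the image of $\Psi_X$.
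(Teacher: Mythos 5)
Your proposal follows essentially the same route as the paper: the same pair of functors (crossed product with $E_\omega$ one way, diamond-space C*-algebra object with $\omega_E$ the other), the same use of Kadison's inequality to extend an expectation-preserving ucp map to the correspondence completions, and the quasi-inverse check via the identifications the paper declares ``straightforward''---which you in fact elaborate more carefully than the paper does, in particular the operator-norm (rather than merely $L^2$) surjectivity of $\Psi_X$ via the finite Pimsner--Popa index. The only point where you are thinner than the paper is the boundedness of $\id_A \rtimes \theta$ on the completed crossed product, which the paper justifies by factoring through the GNS representation $\pi_2$ of $\bbD_2$; asserting extension ``from the dense algebraic coend'' alone does not give this, but the intended argument is the same.
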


\begin{proof}
        At the level of objects, the functor $\rCorr_\Omega(\Vec(\fgpBim(A))) \to \CDisc(A)$ is given by taking the crossed-product.  Functoriality is proven as in Theorem 5.7 in \cite{2023arXiv230407155H}, but we give a sketch of the proof for completeness.

        Suppose $\theta: (\bbD_1, \omega_1) \to (\bbD_2,\omega_2)$ is a morphism in $\rCorr_\Omega(\Vec(\fgpBim(A)))$. Define 
        \begin{align*}
            |\theta|:& |\bbD_1| \to |\bbD_2|\\
            & \xi \odot \eta \mapsto \xi \odot \theta(\eta).
        \end{align*}
        It extends to a ucp-map $\id_A \rtimes \theta: A \rtimes \bbD_1 \to A \rtimes \bbD_2$. Let $\pi_2: \bbD_2 \to \bbB(L^2_{\omega_2}\bbD_2)$ be the GNS-representation. The composite $\pi_2 \circ \theta$ is a ucp map $\bbD_1 \to \bbB(L^2_{\omega_2}\bbD_2)$. For $\xi \odot \eta \in K \odot \bbD_1(K)$, write
        \[ |\pi_2 \circ \theta|(\xi \odot \eta) := \xi \odot \pi_2 (\theta(\eta)). \]
        Then $|\pi_2 \circ \theta|$ extends to a ucp-map $A \rtimes \bbD_1 \to \cL_{A}(|L^2_{\omega_2} \bbD_2|)$ in such a way that
        \begin{center}
            \begin{tikzcd}
A \rtimes \bbD_1 \arrow[rr, "\id_A \rtimes \theta"] \arrow[rdd, "| \pi_2 \circ \theta|"'] &                                    & A \rtimes \bbD_2 \arrow[ldd, "\text{can.}"] \\
                                                                                          &                                    &                                             \\
                                                                                          & \cL_{A}(|L^2_{\omega_2}\bbD_2|) &                                            
\end{tikzcd}
        \end{center}
commutes; in the diagram, $\text{can.}$ denotes the canonical representation of $A \rtimes  \bbD_2$ on $| L^2_{\omega_2} \bbD_2|$. Denoting by $E_i: A \rtimes \bbD_i \to A$ the conditional expectations corresponding to $\omega_i$, since $\omega_2 \circ \theta_1 = \omega_1$, it follows
\[ E_2 \circ (\id_A \rtimes \theta) = E_1. \]
We remark that, in contrast with \cite{2023arXiv230407155H} and \cite{MR3948170}, the above proof of functoriality is shorter because complete positivity implies norm continuity. In the cited papers, the context was a W$^*$-context, and normality of the realization of the maps had to be shown.

Let us construct the functor $\CDisc(A) \to \rCorr_\Omega(\Vec(\fgpBim(A)))$. At the level of objects, it is given by taking the underlying C*-algebra object associated to the discrete inclusion, together with the faithful state on it provided by the faithful conditional expectation (Proposition \ref{prop:ExpectationsVSStates}). We are left to check functoriality with respect to morphisms. Suppose $\phi: (A \subset D_1, E_1) \to (A \subset D_2,E_2)$ is a morphism of discrete inclusions. Using that $\phi$ sends $E_1$ to $E_2$ and Kadisons' inequality, we obtain
    \[ E_2 (\phi(x^*) \phi(x)) \leq E_2(\phi(x^*x)) = E_1(x^*x) \qquad \forall \ x \in D_1. \]
As a consequence, $\phi$ extends to a bounded $A$-$A$-bimodular map $\Phi: \cE_1 \to \cE_2$, where $\cE_i := \overline{D_i}^{\| \cdot \|_{A_i}}$. Let $\bbD_i$ be the underlying C*-algebra object of the discrete inclusion $(A \subset D_i, E_i)$, and let $\omega_i$ be the faithful state on $\bbD_i$ provided by the conditional expectation $E_i$. Recall that
    \[ \bbD_i(K) = \rCorr_{A\-A}(K,\cE_i)^\diamondsuit, \]
    and, since $A \subset D_i$ is assumed C$^*$-discrete, which implies PQR,
    \[L^2_{\omega_i}\bbD_i(K) = \rCorr_{A\-A}(K,\cE_i),  \]
where $K \in \fgpBim(A)$. For every such $K$, and for every $f \in \rCorr_{A}(K,\cE_1)$, $\Phi \circ f$ is a bounded $A\-A$-bimodular map, and therefore adjointable since $K$ is fgp. We have thus, for every $K \in \fgpBim(A)$, a well defined linear map
    \[ \rCorr_{A\-A}(K,\cE_1)  \to \rCorr_{A\-A}(K,\cE_2) \]
given by $f \mapsto \Phi \circ f$.
Since $\Phi$ extends $\phi: D_1 \to D_2$, it preserves the diamond spaces, and defines therefore a natural transformation $\bbD_1 \to \bbD_2$. Since $\phi$ is an $A\-A$-bimodular ucp map, the resulting morphism $\bbD_1 \to \bbD_2$ is easily seen to be a ucp map of C*-algebra objects.

It is straightforward to check that the functors $\rCorr_\Omega(\Vec(\fgpBim(A))) \leftrightarrow \CDisc(A)$ are mutually quasi-inverses.
\end{proof}

\subsection{Characterization of ind-inclusions}
\label{subsec:indinclusions}

Let $A \overset{E}{\subset} D$ be an ind-inclusion of unital C*-algebras, where $A$ is assumed to have trivial center. Our goal in this section is to characterize the algebra
\[ \rCorr_{A\-A}(\cE) \]
where $\cE$ is as in Section 4, i.e., the right $A\-A$ correspondence obtained from $D$ and the conditional expectation $E: D \to A$. We will show in this section that a decomposition of $\rCorr_{A\-A}(\cE)$ into a direct product of type I factors characterizes $A \overset{E}{\subset}D$ as being an ind-inclusion. 

Let $\cH_\cE$ be the object in $\Hilb(\fgpBim(A))$ associated to $\cE$. Recall $A \overset{E}{\subset} D$ being an ind-inclusion means
\[ \cE \simeq |\cH_\cE| := \bigoplus_{K \in \Irr(A)}^{\ell^2} K \otimes \cH_\cE(K) \ \, \]
as a right $A\-A$-correspondence. In particular, $K \otimes \cH_\cE(K)$ is a complemented $A\-A$-subcorrespondence of $\cE$ for all $K \in \Irr(A)$. Let us denote by $P_K$ the corresponding orthogonal central projection. The net of projections which associates to every finite subset $\Lambda \subset \Irr(A)$ the projection
    \[ \sum_{K \in \Lambda} P_K \]
converges strictly to the identity $\id_\cE$.

\begin{lem}    \label{lemma:relcommutcorners}
    For each $K \in \Irr(A)$, 
    \[ P_K \rCorr_{A\-A}(\cE) \simeq \cL(\cH_\cE(K)). \]
\end{lem}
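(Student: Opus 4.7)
The plan is to reduce to a Schur-lemma argument on the amplified corner $K\otimes\cH_\cE(K)$. The decomposition $\cE\simeq\bigoplus_{K\in\Irr(A)}^{\ell^2} K\otimes \cH_\cE(K)$ is an internal orthogonal decomposition of $\cE$ into $A\-A$-subcorrespondences, with $P_K$ the orthogonal projection onto the $K$-summand. The first step is to observe that each $P_K$ is central in $\rCorr_{A\-A}(\cE)$. Indeed, given distinct $K,K'\in\Irr(A)$, Schur's lemma in $\fgpBim(A)$ (using that $\cZ(A)\simeq\bbC$ and that $K,K'$ are simple and non-isomorphic) yields $\rCorr_{A\-A}(K,K')=0$. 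Decomposing $K\otimes \cH_\cE(K)$ as an orthogonal $\ell^2$-sum of copies of $K$ indexed by an ONB of $\cH_\cE(K)$, and similarly for $K'$, I can conclude that any bounded adjointable $A\-A$-bimodular map from $K\otimes \cH_\cE(K)$ to $K'\otimes \cH_\cE(K')$ must vanish. Thus any $T\in\rCorr_{A\-A}(\cE)$ sends $K\otimes\cH_\cE(K)$ into itself, which implies $P_K T=TP_K$.

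Once centrality is established, $P_K\rCorr_{A\-A}(\cE)=P_K\rCorr_{A\-A}(\cE)P_K\simeq \rCorr_{A\-A}(K\otimes\cH_\cE(K))$. The second step is to identify the latter with $\cL(\cH_\cE(K))$. Fixing an orthonormal basis $\{e_\alpha\}$ of the Hilbert space $\cH_\cE(K)$, the amplification $K\otimes\cH_\cE(K)$ is canonically unitarily isomorphic, as an $A\-A$-correspondence, to the $\ell^2$-direct sum $\bigoplus_\alpha K$. A bounded adjointable $A\-A$-bimodular operator on this sum is given by a matrix $(T_{\alpha\beta})$ with $T_{\alpha\beta}\in\End_{A\-A}(K)$. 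Simplicity of $K$ combined with $\cZ(A)\simeq\bbC$ forces $\End_{A\-A}(K)=\bbC\id_K$, so the matrix $(T_{\alpha\beta})$ lies in $\cL(\cH_\cE(K))$; conversely, any bounded matrix in this basis provides an adjointable $A\-A$-bimodular operator. The resulting identification is independent of the basis, yielding the canonical isomorphism $\rCorr_{A\-A}(K\otimes\cH_\cE(K))\simeq \cL(\cH_\cE(K))$ and hence the claim.

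The main obstacle I anticipate is rigorously handling the passage from Schur's lemma for finitely many irreducibles to the $\ell^2$-infinite amplification: one needs to check that the vanishing of $A\-A$-bimodular morphisms between distinct irreducibles persists after tensoring with arbitrary Hilbert multiplicity spaces, and that the matrix representation of bimodular endomorphisms of $K\otimes \cH_\cE(K)$ actually gives \emph{bounded} operators on $\cH_\cE(K)$ with the correct adjoint. Both facts follow from the universal property of the external tensor product $\otimes$ together with adjointability of morphisms out of dualizable objects in $\fgpBim(A)$, but the details need to be stated cleanly.
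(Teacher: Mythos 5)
Your proposal is correct and follows essentially the same route as the paper: the paper's (much terser) proof likewise reduces via centrality of $P_K$ to $P_K\rCorr_{A\-A}(\cE)=P_K\rCorr_{A\-A}(\cE)P_K\simeq\rCorr_{A\-A}(K\otimes\cH_\cE(K))$ and then invokes simplicity of $K$ to identify this with $\cL(\cH_\cE(K))$. You merely supply the details the paper leaves implicit (Schur's lemma for centrality, and the matrix argument using $\End_{A\-A}(K)=\bbC\id_K$), which is a faithful expansion rather than a different approach.
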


\begin{proof}
    It follows from the above orthogonal decomposition of $\cE$ that 
    \[P_K \rCorr_{A\-A}(\cE) = P_K \rCorr_{A\-A}(\cE)P_K \simeq \rCorr_{A\-A}(K \otimes \cH_\cE(K)). \]
    Since $K$ is a simple $A\-A$-bimodule, it follows that
    the latter is $\cL(\cH_\cE(K))$.
\end{proof}

\begin{cor}\label{cor:relcommutcharact->}
    There is an isomorphism
    \[  \rCorr_{A\-A}(\cE) \simeq \prod_{K \in \Irr(A)}^{\ell^\infty} \cL(\cH_\cE(K)). \]
\end{cor}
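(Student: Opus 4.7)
The plan is to exploit that the projections $P_K$, for $K \in \Irr(A)$, are central in $\rCorr_{A\-A}(\cE)$, together with the identification provided by Lemma \ref{lemma:relcommutcorners}.

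First, I would check centrality of each $P_K$. Since $\cE \simeq \bigoplus_{K \in \Irr(A)}^{\ell^2} K \otimes \cH_\cE(K)$, any $T \in \rCorr_{A\-A}(\cE)$ decomposes into components
\[
T_{K,L} \in \rCorr_{A\-A}(L \otimes \cH_\cE(L), K \otimes \cH_\cE(K))
\]
for $K,L \in \Irr(A)$. Using that $\cH_\cE(L)$ and $\cH_\cE(K)$ sit as multiplicity Hilbert spaces over simple $A\-A$-bimodules $L$ and $K$, together with Schur's lemma $\rCorr_{A\-A}(L,K) = 0$ for $L \not\simeq K$, I get $T_{K,L} = 0$ unless $K = L$. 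Hence $P_K T = T P_K$ for every $T$, i.e. $P_K$ is central.

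Next I would define the comparison map
\[
\Phi: \rCorr_{A\-A}(\cE) \longrightarrow \prod_{K \in \Irr(A)}^{\ell^\infty} \cL(\cH_\cE(K)), \qquad T \longmapsto (P_K T P_K)_{K \in \Irr(A)},
\]
where I use Lemma \ref{lemma:relcommutcorners} to interpret each $P_K T P_K$ as an element of $\cL(\cH_\cE(K))$. The $\ell^\infty$-bound $\sup_K \|P_K T P_K\| \leq \|T\|$ is immediate since each $P_K$ is a projection. Injectivity follows from the centrality step: if $P_K T P_K = 0$ for all $K$, then $T_{K,K} = 0$, and combined with the vanishing of off-diagonal blocks this gives $T = 0$. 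A standard $\varepsilon/2$-style argument on the net $\sum_{K \in \Lambda} P_K \to \id_\cE$ (strictly) shows that the inequality is actually an equality: $\|T\| = \sup_K \|P_K T P_K\|$, so $\Phi$ is isometric.

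For surjectivity, I would start with an arbitrary uniformly bounded family $(T_K)_{K \in \Irr(A)}$ with $T_K \in \cL(\cH_\cE(K))$ and $M := \sup_K \|T_K\| < \infty$. Using Lemma \ref{lemma:relcommutcorners} I lift each $T_K$ to an adjointable $A\-A$-bimodular operator $\widetilde{T}_K$ on $K \otimes \cH_\cE(K)$, extended by zero to the other isotypic summands, and then set $T := \mathrm{SOT}\text{-}\sum_{K} \widetilde{T}_K$ on the dense subset $\bigoplus_{K \in \Lambda} K \otimes \cH_\cE(K)$ for finite $\Lambda$. Uniform boundedness by $M$ ensures this extends to an adjointable operator on all of $\cE$ with $\Phi(T) = (T_K)_K$; the adjoint is obtained analogously from $(T_K^*)_K$. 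The main subtlety is ensuring adjointability of this assembled $T$ rather than mere boundedness, but this is handled because each $\widetilde{T}_K$ is adjointable and adjointability is preserved under orthogonal direct sums of families with uniformly bounded norm in the Hilbert C*-module setting. This yields the desired $*$-isomorphism.
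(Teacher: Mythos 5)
Your proposal is correct and follows essentially the same route as the paper: decompose $\cE$ into the isotypic summands $K \otimes \cH_\cE(K)$, use the centrality of the projections $P_K$ and Lemma \ref{lemma:relcommutcorners} to identify each corner with $\cL(\cH_\cE(K))$, and obtain surjectivity by assembling an adjointable operator from a uniformly bounded family. You merely supply more detail than the paper does (the Schur-lemma vanishing of off-diagonal blocks and the adjointability of the diagonal assembly), all of which is sound.
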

\begin{proof}
    It follows from the discussion above that there is an embedding of $\rCorr_{A\-A}(\cE)$ into $\prod_K^{\ell^\infty} \cL(\cH_\cE(K))$.
    Now an element $x \in \prod_K^{\ell^\infty} \cL(\cH_\cE(K))$ is a bounded family $(x_K)_{K \in \Irr(A)}$ where $x_K \in \cL(\cH_\cE(K))$. There is a unique $\tilde{x} \in \rCorr_{A\-A}(\cE)$ defined by the condition that
    \[ P_k \tilde{x} = (\id_K \otimes x_K) P_K \qquad \forall \ K \in \Irr(A). \]
    Then the image of $\tilde{x}$ under the embedding $\rCorr_{A\-A}(\cE) \hookrightarrow \prod_K^{\ell^\infty} \cL(\cH_\cE(K))$ coincides with $x$.
\end{proof}

We now show the results above characterize  ind-inclusions.

\begin{thm}[Theorem \ref{thmalpha:relcommutchar<-}]\label{thm:relcommutchar<-}
    The inclusion $A \overset{E}{\subset}D$ is an ind-inclusion if and only if there exists a family of mutually orthogonal central projections $\{P_i\}_{i \in I} \subset \rCorr_{A\-A}(\cE)$ such that
    \begin{itemize}
        \item[(1)] $\sum_{i \in I} P_i$ converges strictly to $\id_\cE$;
        \item[(2)] for each $i \in I$ there is a Hilbert space $H_i$ such that $P_i \rCorr_{A\-A}(\cE) \simeq \cL(H_i)$;
        \item[(3)] for every projection $P \in \rCorr_{A\-A}(\cE)$, $P(\cE) \in \Hilb(\fgpBim(A))$.
    \end{itemize}
\end{thm}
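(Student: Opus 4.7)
The plan is to combine the structural decomposition given by Corollary~\ref{cor:relcommutcharact->} with the full faithfulness of the realization functor from Proposition~\ref{prop:realizationofHilbertspaceobjectsfunctor}, arguing each direction in turn.

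For the forward direction, assume $\cE \simeq |\cH_\cE|$. Let $I := \{K \in \Irr(A) : \cH_\cE(K) \neq 0\}$, and for each $K \in I$ let $P_K$ be the central orthogonal projection in $\rCorr_{A\-A}(\cE)$ onto $K \otimes \cH_\cE(K)$. Condition (1) is immediate from the $\ell^2$-direct sum decomposition, and Lemma~\ref{lemma:relcommutcorners} gives (2). For (3), any projection $P \in \rCorr_{A\-A}(\cE)$ corresponds via Corollary~\ref{cor:relcommutcharact->} to a bounded family of projections $(p_K)_{K}$ with $p_K \in \cL(\cH_\cE(K))$, so
\[ P(\cE) \simeq \bigoplus_{K \in \Irr(A)}^{\ell^2} K \otimes p_K(\cH_\cE(K)), \]
manifestly the realization of the Hilbert space object $K \mapsto p_K(\cH_\cE(K))$.

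For the converse, condition (3) applied to each $P_i$ yields $P_i(\cE) = |\cH_i|$ for some $\cH_i \in \Hilb(\fgpBim(A))$. Since $P_i$ is central, $P_i \rCorr_{A\-A}(\cE) = \rCorr_{A\-A}(P_i(\cE))$, and Proposition~\ref{prop:realizationofHilbertspaceobjectsfunctor} together with the endomorphism calculus of ind-objects give
\[ P_i \rCorr_{A\-A}(\cE) \simeq \prod_{K \in \Irr(A)}^{\ell^\infty} \cL(\cH_i(K)). \]
Condition (2) asserts this algebra is the single type I factor $\cL(H_i)$, which has trivial center. Since the center of the product is $\prod_K^{\ell^\infty} \bbC$, this forces all but one fiber $\cH_i(K)$ to vanish; call the surviving index $K_i \in \Irr(A)$, so $\cH_i(K_i) \simeq H_i$ and $P_i(\cE) \simeq K_i \otimes H_i$. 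Assembling via (1) and regrouping by isomorphism type of $K_i$,
\[ \cE \simeq \bigoplus_{i \in I}^{\ell^2} K_i \otimes H_i \simeq \bigoplus_{K \in \Irr(A)}^{\ell^2} K \otimes \bigoplus_{\{i \,:\, K_i \simeq K\}}^{\ell^2} H_i, \]
which is the realization of the Hilbert space object $K \mapsto \bigoplus_{\{i \,:\, K_i \simeq K\}}^{\ell^2} H_i$. Computing $\rCorr_{A\-A}(K, \cE)$ for each simple $K$ identifies this object with $\cH_\cE$, so $\cE \simeq |\cH_\cE|$, i.e.\ $A \overset{E}{\subset} D$ is an ind-inclusion.

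The main obstacle I anticipate is the step translating condition (2) into the structural statement that each $P_i(\cE)$ is a single irreducible amplified by a multiplicity Hilbert space; this rests on the center-calculus remark that an $\ell^\infty$-product of type I factors can collapse to a single type I factor only when at most one summand is nonzero. Condition (3) is indispensable to this argument, as it is what allows the a priori unstructured subcorrespondence $P_i(\cE)$ to be recognised as an object of $\Hilb(\fgpBim(A))$, whereupon its endomorphism algebra factorises as the $\ell^\infty$-product over $\Irr(A)$ on which the centre argument operates.
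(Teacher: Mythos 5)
Your proof is correct and follows essentially the same route as the paper's: the forward direction via the decomposition of $\rCorr_{A\-A}(\cE)$ already established in Lemma~\ref{lemma:relcommutcorners} and Corollary~\ref{cor:relcommutcharact->}, and the converse by using (3) to recognise each $P_i(\cE)$ as a realized Hilbert space object, (2) to collapse its support to a single irreducible, and (1) to assemble the pieces. Your centre computation in $\prod_K^{\ell^\infty}\cL(\cH_i(K))$ and the final regrouping by isomorphism type of the $K_i$ make explicit two points the paper passes over quickly (the paper instead derives disjointness of the supports of the $H_i$ from orthogonality and centrality), but the argument is the same.
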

\begin{proof}
    We already know that $A \subset D$ being and ind-inclusion implies the existence of $\{P_i\}_{i \in I}$ with the properties above. Let us prove the converse.

    Given $i \in I$, $(3)$ implies that
    \[ P_i( \cE) = \bigoplus_{K \in \Irr(A)}^{\ell^2} K \otimes H_i(K), \]
    where $H_i \in \Hilb(\fgpBim(A))$. Since for $i \neq j$ in $I$ we have $P_i P_j = P_j P_i = 0$, the supports of $H_i$ and $H_j$ in $\Irr(A)$ must be disjoint. ($K \in \Irr(A)$ belongs to the support of $H_i$ means that $H_i(K) \neq 0$.) Thus, if $K$ maps non-trivially into $\cE$ as an $A\-A$-bimodule, there is a unique $i \in I$ such that $K$ belongs to the support of $H_i$. For $i,j \in I$,
    \[ \Hilb(\fgpBim(A))(H_i,H_j)  = \prod_{K \in \Irr(A)}^{\ell^\infty} \cL(H_i(K),H_j(K)). \]
    Now the orthogonality of the family $\{P_i\}_i$ together with item $(2)$ imply that for each $i$ there is a unique $K_i$ in the support of $H_i$. Moreover, there is an embedding
    \[ \bigoplus_{i \in I} K_i \otimes H_i(K_i) \hookrightarrow \cE. \]
    Item $(1)$ implies that this embedding has dense range, that is,
    \[ \cE \simeq \bigoplus_{i \in I}^{\ell^2} K_i \otimes H_i(K_i). \]
    Thus, defining the Hilbert space object $\cH_\cE$ by $\cH_\cE(K_i) = H_i(K_i)$, $\cH_\cE(K) = 0$ if $K$ is not in the support of any of the $H_i$'s, it follows
    \[ \cE \simeq |\cH_\cE|. \]
\end{proof}

\section{Examples}
\label{sec:examples}

\subsection{Compact quantum group actions}

Now we explain how the theory of continuous actions of compact quantum groups fits into our framework, providing examples of C$^*$-discrete inclusions. For the fundamentals of the theory of compact quantum groups and their representation categories, see \cite{MR3204665}.

Let $\bbG$ be a compact quantum group with reduced algebra of functions $C(\bbG)$, i.e., we consider the Haar state to be faithful on $C(\bbG)$. Consider the unitary tensor category $\Rep(\bbG)$ of finite dimensional unitary representations of $\bbG$, together with its fiber functor $F: \Rep(\bbG) \to \Hilb_{fd}$, which is in particular a C$^*$-algebra object in $\Vec(\Rep(\bbG)^{\op})$. Then the coend realization
\[ F \bowtie (-): \bbA \mapsto F \rtimes \bbA =: A \]
can be equipped with the structure of a functor from the category of C$^*$-algebra objects in $\Vec(\Rep(\bbG))$ to the category of C$^*$-algebras equipped with a {\em continuous} action of $\bbG$ (\cite{MR3426224}). There is a canonical isomorphism
\[ \bbA(\bbOne) \simeq A^{\bbG}, \]
where $A^{\bbG}$ denotes the fixed point subalgebra of $A$ under the $\bbG$-action, and $\bbOne \in \Rep(\bbG)$ stands for the trivial representation. The canonical conditional expectation
\[ \bbE: A = F \bowtie \bbA \to F(\bbOne) \otimes \bbA(\bbOne) \simeq \bbA(\bbOne) \simeq A^{\bbG} \]
corresponds to averaging along the $\bbG$-action on $A$ by means of the Haar state.

\subsection{Semi-circular systems}
\label{subs:semicircularsystems}

In this section we describe when an $A$-valued semi-circular system gives rise to an ind-inclusion. This is equivalent to asking when the Fock space $\cF(\eta)$ of the given covariance matrix is an ind-object in $\fgpBim(A)$. Our main result here is a characterization of that property in terms of the covariance matrix $\eta$.

\begin{defn}
    A covariance matrix on a C*-algebra $A$ is a completely positive map $\eta: A \to A \otimes \cL(\ell^2(I))$, $I$ being an arbitrary index set.
\end{defn}

Covariance matrices can be produced by means of the following Lemma.

\begin{lem}
    Suppose $\cE$ is a right $A\-A$-correspondence. Suppose $\{\xi_i\}_{i \in I}$ is a family of vectors in $\cE$ with the property that there exists $C \geq 0$ such that
    \[ \sum_{j \in I} \| \< \xi_i, a \triangleright \xi_j \> \|^2 \leq C \|a\|^2 \ \forall \ a \in A ,  \ \forall \  i \in I . \]
    Denoting by $\{e_{i,j}\}_{i,j \in I}$ a choice of a system of matrix units in $\cL(\ell^2(I))$, it follows that 
    \begin{align*}
        \eta: A & \to A \otimes \cL(\ell^2(I)) \\
        a & \mapsto \sum_{i,j \in I} \< \xi_i, a \triangleright \xi_j \> \otimes e_{i,j}
    \end{align*}
    is a covariance matrix on $A$.
\end{lem}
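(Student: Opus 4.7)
The plan is to realize $\eta$ as a Stinespring-type compression: build a bounded adjointable operator $V$ from the standard Hilbert $A$-module $A \otimes \ell^2(I)$ into $\cE$, and then compress the left-multiplication action of $A$ through it. On the algebraic dense subspace $A \odot \ell^2_{\mathrm{fin}}(I)$, I would set
\[
V(b \otimes e_j) := \xi_j \triangleleft b,
\]
and let $\pi_\cE : A \to \cL(\cE)$ denote left multiplication. Assuming $V$ extends to a bounded adjointable operator, $\eta(a) := V^* \pi_\cE(a) V$ is automatically completely positive, and a short computation on generators $b \otimes e_j$ shows that its matrix entries are exactly $\<\xi_i, a \triangleright \xi_j\>$, matching the stated formula.

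First, I would approach $V$ via finite truncations. For each finite subset $F \subset I$, the restriction $V_F := V|_{A \otimes \ell^2(F)}$ is patently bounded, and the compression
\[
\eta_F(a) := V_F^* \pi_\cE(a) V_F = \sum_{i,j \in F} \<\xi_i, a \triangleright \xi_j\> \otimes e_{ij}
\]
is a CP map $A \to A \otimes M_{|F|}(\bbC)$. The remaining task is to produce a uniform bound $\|V_F\|^2 \leq C$ independent of $F$.

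Second, the hypothesis supplies this bound. Using the identity $\<\xi_i, a \triangleright \xi_j\>^* = \<\xi_j, a^* \triangleright \xi_i\>$ and applying the hypothesis to $a^*$, one gets the column counterpart $\sum_{i \in I} \|\<\xi_i, a \triangleright \xi_j\>\|^2 \leq C \|a\|^2$. Combining the rowwise and columnwise $\ell^2$-bounds via a Schur-type inequality, applied to the positive Gram matrix $V_F^* V_F = \eta_F(1_A) \in M_{|F|}(A)$, yields $V_F^* V_F \leq C \cdot \mathbf{1}_{M_{|F|}(A)}$ uniformly in $F$, hence $\|V_F\|^2 \leq C$. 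Passing to the inductive limit over finite $F$, $V$ extends to a bounded adjointable operator $A \otimes \ell^2(I) \to \cE$ with $\|V\|^2 \leq C$.

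Finally, with $V$ bounded, $\eta(a) := V^* \pi_\cE(a) V$ is completely positive; it sits in $\cL(A \otimes \ell^2(I)) \cong M(A \otimes \cK(\ell^2(I)))$, and since the net $(\eta_F(a))_F$ converges to $\eta(a)$ in the strict topology with each $\eta_F(a) \in A \otimes \cL(\ell^2(I))$, the limit belongs to $A \otimes \cL(\ell^2(I))$ as well. The main obstacle is the Schur-type step in which $\ell^2$-bounds on the rows and columns of the entries of $V_F^* V_F$ must be converted into a uniform operator-norm bound on the matrix itself; this requires exploiting both the positivity of $V_F^* V_F$ and the Cauchy--Schwarz inequality at the level of the Hilbert-$A$-module $A^{|F|}$, since the analogous scalar statement fails for matrices that only have $\ell^2$-bounded rows.
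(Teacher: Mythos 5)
Your overall strategy---realizing $\eta(a)=V^*\pi_\cE(a)V$ as a Stinespring-type compression through the synthesis operator $V(b\otimes e_j)=\xi_j\triangleleft b$---is attractive and, \emph{if} $V$ were bounded, would give complete positivity for free; it is also a more structured packaging of what the paper does (the paper instead verifies boundedness and positivity of $\eta(a)$ directly and handles complete positivity by repeating the row-bound argument on the amplified correspondence $\cE\otimes\bbC^n$). The problem is the step you yourself flag as the main obstacle: there is no Schur-type inequality converting uniform $\ell^2$-bounds on the rows and columns of the positive matrix $V_F^*V_F=[\<\xi_i,\xi_j\>]_{i,j\in F}$ into a bound $\|V_F^*V_F\|\leq C$ independent of $F$. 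Already for $A=\bbC$ and $\cE$ a Hilbert space this fails: take $\xi_1=\cdots=\xi_N=\varepsilon e$ for a unit vector $e$ with $\varepsilon=(C/N)^{1/4}$. Then every row of the Gram matrix $G_N=\varepsilon^2 J_N$ ($J_N$ the all-ones matrix) has $\ell^2$-norm exactly $\sqrt{C}$, the hypothesis of the lemma holds with constant $C$ for all $a\in\bbC$, the matrix is positive, and yet $\|G_N\|=N\varepsilon^2=\sqrt{CN}\to\infty$. Positivity and Cauchy--Schwarz in $A^{|F|}$ give at best $\|G\|\leq\operatorname{trace}(G)\leq |F|\sqrt{C}$, which is not uniform in $F$. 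So the inductive limit over finite $F$ does not converge to a bounded $V$, and the proof collapses at that point.

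Worse, the counterexample is not an artifact of your route: taking the orthogonal direct sum of the blocks above over $N\in\bbN$ inside $\cE=\ell^2(\bbN)$ produces a family $\{\xi_i\}$ satisfying the stated hypothesis for which $\eta(1)=\bigoplus_N \varepsilon_N^2 J_N$ is an unbounded block-diagonal matrix, hence not an element of $\cL(\ell^2(I))$ at all. Thus no argument can close the gap from the hypothesis as written; the paper's own assertion that the row bound ``straightforwardly'' implies $\|\eta(a)\|\leq C^{1/2}\|a\|$ runs into the same obstruction (a uniform $\ell^2$-row bound only controls the norm of $\eta(a)$ as a map $\ell^2\to\ell^\infty$). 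What your argument actually needs---and what would make it work verbatim---is the stronger hypothesis that the Gram matrix $[\<\xi_i,\xi_j\>]$ defines a bounded operator on the Hilbert module $A\otimes\ell^2(I)$ (equivalently, that $\{\xi_i\}$ is a Bessel family, i.e.\ $\|\sum_i \xi_i\triangleleft c_i\|^2\leq C\sum_i\|c_i\|^2$), or alternatively a Hilbert--Schmidt-type bound $\sum_{i,j}\|\<\xi_i,a\triangleright\xi_j\>\|^2\leq C\|a\|^2$ with the sum over both indices. Under either strengthening your Stinespring argument goes through and is arguably cleaner than the paper's; under the hypothesis as stated, both proofs fail.
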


\begin{proof}
    In the canonical representation $A \otimes \cL(\ell^2(I)) \subset \cL_{A}(A \otimes \ell^2(I))$, it is straightforward to see that  
    \[\sum_{j \in I} \| \< \xi_i, a \triangleright \xi_j \> \|^2 \leq C \|a\|^2 \]
    for all $i \in I$ implies $\|\eta(a)\| \leq C^{1/2} \|a\|$, and that $\eta(a^*a)$ is positive for all $a$. To show that $\eta$ is completely positive, observe that $\eta \otimes \id_{\bbM_n}$ can be written in terms of the correspondence $\cE \otimes \bbC^n$ as
    \[(\eta \otimes \id_{\bbM_n})( a \otimes m_{kl}) = \sum_{i,j \in I} \sum_{k',l' = 1}^n \< \xi_i \otimes v_{k'}, (a \otimes m_{k,l}) \triangleright (\xi_j \otimes v_{l'} \> \otimes m_{k',l'} \otimes e_{i,j} , \]
    where $\{v_k\}_k$ is an orthonormal basis for $\bbC^n$ and $\{m_{k,l}\}_{k,l}$ the corresponding system of matrix units.
\end{proof}

 The discussion to follow shows in particular that every covariance matrix can be obtained in this way. 
 
 Associated to a covariance matrix $\eta$, there is an $A\-A \otimes \cL(\ell^2(I))$-correspondence 
\[A \underset{\eta}{\otimes} (A \otimes \cL(\ell^2(I))) . \]
It is the separation-completion of the algebraic tensor product $A \odot (A \otimes \cL(\ell^2(I)))$ with respect to the inner product
\[ \langle a \otimes x , b \otimes y \rangle := x^* \eta(a^*b) y . \]

Fix $i_0 \in I$. There is an associated embedding of C*-algebras
\[ A \simeq A \otimes \bbC e_{i_0,i_0} \subset A \otimes \cL(\ell^2(I)) , \]
It is clear then that
\[ \cX := [A \underset{\eta}{\otimes} (A \otimes \cL(\ell^2(I)))] \triangleleft (1 \otimes e_{i_0,i_0}) \]
inherits the structure of a right $A\-A$-correspondence. Observe that, if we write
\[ \eta(a) = \sum_{i,j} \eta_{i,j}(a) \otimes e_{i,j} , \]
and define $\xi_i := 1 \otimes e_{i,i_0} \in \cX$, then
\[ \langle \xi_i, a \triangleright \xi_j \rangle = \eta_{i,j}(a) . \]

\begin{defn}
    The Fock space associated to the covariance matrix $\eta$ above is
    \[ \cF(\eta) := \bigoplus_{n \in \bbZ_{\geq 0}}^{\ell^2}\cX^{\underset{A}{\boxtimes} n}  , \]
    where $\cX^{\underset{A}{\boxtimes}0 } : = A$.
\end{defn}

Given $\xi \in \cX$, let $T_\xi \in \cL_A(\cF(\eta))$ be the creation operator associated to $\xi$:
\[ T_{\xi}(\eta) = \xi \boxtimes \eta \ \forall \ \eta \in \cF(\eta) . \]

\begin{defn}
    The semi-circular operator $X_\xi \in \cL_A(\cF(\eta))$ associated to a vector $\xi \in \cX$ is given by 
    \[ X_\xi := T_\xi + T_\xi^* . \]
    For $i \in I$, write $X_i := X_{\xi_i}$. Let
    \[ \Phi(\eta) := C^*(A,\{X_i\}_{i \in I}) \subset \cL_A(\cF(\eta)) . \]
\end{defn}

The image $\Omega$ of the unit $1_A \in A$ under the embedding $A \to \cX \to \cF(\eta)$ is a cyclic vector for the action of $\Phi(\eta)$. It induces a conditional expectation $E: \Phi(\eta) \to A$ via
\[ E(x) := \langle x \Omega, \Omega \rangle . \]

From \cite[Theorem 5.2]{2024arXiv240918161H}, a sufficient condition for the conditional expectation $E$ to be faithful is that $A$ admits a faithful tracial state $\tau$ such that
$\tau(\eta_{ij}(x)y) = \tau(x\eta_{ji}(y))$ for all $x,y \in A$. 

Assume from now on that $E$ is faithful. Let $\bbD_\eta: \fgpBim(A)^{\op} \to \Vec$ be given by
\[ \bbD_\eta(K) := \rCorr_{A\-A}(K,\cF(\eta))^{\diamondsuit} := \{ f \in \rCorr_{A\-A}(K, \cF(\eta)) \ | \ f[K] \subset \Phi(\eta) \Omega\} . \]

Due to C*-Frobenius reciprocity, $\bbD_\eta \in \rCorr(\Vec(\fgpBim(A)))$. There is a canonical embedding $A \rtimes \bbD_\eta \hookrightarrow \Phi(\eta)$, given by
\begin{align*}
    K \otimes \bbD_\eta(K) & \to \Phi(\eta) \\
    \zeta \otimes f &\mapsto \check{f}(\zeta) ,
\end{align*}
where, for $K$ a simple $A\-A$-bimodule, $\check{f}$ denotes the unique factorization of $f$ through $\Phi(\eta) \ni x \mapsto x \triangleright \Omega \in \cF(\eta)$.

  \begin{lem}
  We have
  \[ \overline{\spann\{f(\xi) \ | f \in \bbD_\eta(K), \xi \in K, K \in \Irr(A) \}}^{\|\cdot\|_A} \simeq |L^2_\omega \bbD_\eta | , \]
 as $A$-$A$ correspondences, where $\omega$ is the state associated to the conditional expectation $A \rtimes \bbD \to A$ obtained by restriction of the conditional expectation on $\Phi(\eta)$.
  \end{lem}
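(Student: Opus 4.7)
The plan is to rewrite the left-hand side as the abstract $A$-$A$-correspondence completion $\cE_{A\rtimes\bbD_\eta}$ of the crossed product $A\rtimes\bbD_\eta$ (with respect to the conditional expectation induced by $\omega$), and then invoke that C*-discrete inclusions are automatically PQR to identify this completion with $|L^2_\omega\bbD_\eta|$.

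First I would unpack the span appearing inside the closure. For $K\in\Irr(A)$, $\xi\in K$, and $f\in\bbD_\eta(K)$, the defining relation $f(\xi)=\check{f}(\xi)\triangleright\Omega$ (with $\check{f}(\xi)$ the image of $\xi\odot f$ under the canonical embedding $\iota: A\rtimes\bbD_\eta\hookrightarrow\Phi(\eta)$ recalled just before the lemma) shows the span equals $\iota(|\alpha\times\bbD_\eta|)\triangleright\Omega$, where $|\alpha\times\bbD_\eta|=\bigoplus_{K\in\Irr(A)} K\odot\bbD_\eta(K)$ is the norm-dense $*$-subalgebra of $A\rtimes\bbD_\eta$ from Definition~\ref{def:crossedproduct}. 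Since $A\rtimes\bbD_\eta$ acts on $\cF(\eta)$ by adjointable operators (via $\iota\subset\Phi(\eta)\subset\cL_A(\cF(\eta))$) and $\Omega$ has unit $A$-valued norm, the $\|\cdot\|_A$-closure of this span coincides with the $\|\cdot\|_A$-closure of $\iota(A\rtimes\bbD_\eta)\triangleright\Omega$ inside $\cF(\eta)$.

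Next I would identify this closure with $\cE_{A\rtimes\bbD_\eta}$ by comparing the $A$-valued inner product pulled back from $\cF(\eta)$ along $x\mapsto\iota(x)\triangleright\Omega$ with the one defined intrinsically on $A\rtimes\bbD_\eta$ via $E_\omega$. Since
$$\langle\iota(x)\triangleright\Omega,\,\iota(y)\triangleright\Omega\rangle_A=E(\iota(x)^*\iota(y)),$$
and $E\circ\iota$ is an $A$-$A$-bimodular conditional expectation factoring through the canonical $\bbE:A\rtimes\bbD_\eta\to A\otimes\bbD_\eta(\bbOne)$, Lemma~\ref{lem:correspondencebetweencondexpandstates} together with Proposition~\ref{prop:ExpectationsVSStates} forces $E\circ\iota=E_\omega$. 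Faithfulness of $E$ then makes $x\mapsto\iota(x)\triangleright\Omega$ isometric with respect to the $E_\omega$-induced $A$-valued inner product, so the LHS is canonically isomorphic to $\cE_{A\rtimes\bbD_\eta}$ as $A$-$A$-correspondences.

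To conclude, Theorem~\ref{thm: thereconstructiontheorem} identifies $(\bbD_\eta,\omega)$ with the discrete inclusion $A\overset{E_\omega}{\subset} A\rtimes\bbD_\eta$ having $\bbD_\eta$ as its associated C*-algebra object; the proposition following Definition~\ref{def:notionsofregularity} then upgrades C*-discreteness to PQR, giving $\cE_{A\rtimes\bbD_\eta}\simeq|L^2_\omega\bbD_\eta|$. The only substantive step is the matching of the two conditional expectations $E\circ\iota$ and $E_\omega$, and this is not really an obstacle: it reduces to noting that both are $A$-$A$-bimodular maps $A\rtimes\bbD_\eta\to A$ that restrict to $\omega$ on the ground algebra $\bbD_\eta(\bbOne)$.
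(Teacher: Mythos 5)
Your argument is correct, but it routes through heavier machinery than the paper does, so it is worth comparing the two. The shared core is the identification of the two $A$-valued inner products on the dense span: both you and the paper observe that $\langle x\triangleright\Omega, x\triangleright\Omega\rangle_A = E(x^*x) = (\id_A\otimes\omega)\bigl(\bbE(x^*x)\bigr)$ for $x$ in the algebraic part $\bigoplus_{K}K\odot\bbD_\eta(K)$, with $\omega$ pinned down by Lemma~\ref{lem:correspondencebetweencondexpandstates}. The paper then simply reads off that the right-hand side \emph{is} the inner product defining $|L^2_\omega\bbD_\eta|$ on a dense subspace, and concludes in one line that the two completions coincide. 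You instead interpose the correspondence completion $\cE_{A\rtimes\bbD_\eta}$ of the crossed product and then invoke Theorem~\ref{thm: thereconstructiontheorem} (to know that $A\subset A\rtimes\bbD_\eta$ is C*-discrete with associated algebra object $(\bbD_\eta,\omega)$) together with the implication C*-discrete $\Rightarrow$ PQR to land on $|L^2_\omega\bbD_\eta|$. This is logically sound --- the lemma sits in Section~5, after both results you cite --- but note that the proof of ``$(1)\Rightarrow(2)$'' in the paper is itself exactly the inner-product computation you already carried out, so your final step re-derives through general theory what you had in hand directly; the detour buys nothing here and makes the proof depend on the ``straightforward to check'' quasi-inverse part of the reconstruction theorem (in particular, that the diamond spaces of $A\subset A\rtimes\bbD_\eta$ computed in $\cE_{A\rtimes\bbD_\eta}$ agree with $\bbD_\eta$ as defined via $\cF(\eta)$). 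The paper's direct computation avoids that dependency. Your preliminary reductions (rewriting the span as $\iota(|A\times\bbD_\eta|)\triangleright\Omega$ and passing to the closure of $\iota(A\rtimes\bbD_\eta)\triangleright\Omega$) are fine, and your matching of $E\circ\iota$ with $E_\omega$ is in fact immediate from the way $\omega$ is defined in the statement of the lemma.
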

\begin{proof}
Recall the $*$-algebra
\[ | A \times \bbD_\eta | := \bigoplus_{K \in \Irr(A)} K \odot \bbD_\eta(K) , \]
and the faithful conditional expectation $\bbE: | A \times \bbD_\eta | \to A \odot \bbD_\eta(\bbOne) \subset A \otimes \bbD_\eta(\bbOne)$. The state $\omega \in \bbD_\eta(\bbOne)$ is determined by $E = (\id_A \otimes \omega) \circ \bbE$.
    Let $x_\lambda$ be a net in $|A \times \bbD|$ such that $x_\lambda \triangleright \Omega \to 0$ in $\cF(\eta)$. This is the case if and only if
    \[ E(x_\lambda^* x_\lambda) = (\id \otimes \omega)(\bbE(x_\lambda^* x_\lambda)) \to 0 . \]
    The right hand side is exactly the $A$-valued inner product on $|L^2_\omega \bbD_\eta |$.
\end{proof}

\begin{thm}
    Suppose the conditional expectation $E: \Phi(\eta) \to A$ is faithful. Then the inclusion $A \subset \Phi(\eta)$ is an ind-inclusion if and only if $A \underset{\eta_{ii}}{\otimes} A \in \Hilb(\fgpBim(A))$. 
    \label{thm:characindsemicirc}
\end{thm}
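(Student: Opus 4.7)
The plan is to peel off the question in layers, reducing the ind-inclusion property of $A\subset \Phi(\eta)$ successively to a statement about the Fock space $\cF(\eta)$, then about the generating correspondence $\cX$, and finally about the diagonal building blocks $A\underset{\eta_{ii}}{\otimes} A$.

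\emph{Step 1 (identify $\cE$ with $\cF(\eta)$).} Since $\Omega$ is cyclic for the left action of $\Phi(\eta)$ on $\cF(\eta)$ and $\|x\triangleright\Omega\|^2 = E(x^*x)$, the map $x\mapsto x\triangleright\Omega$ extends to an $A$-$A$-bimodular isometric isomorphism $\cE := \overline{\Phi(\eta)}^{\|\cdot\|_A}\simeq \cF(\eta)$. By definition of ind-inclusion, it suffices to decide when $\cF(\eta)\in\Hilb(\fgpBim(A))$.

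\emph{Step 2 (reduce to $\cX$).} Write $\cF(\eta) = \bigoplus_{n\geq 0}^{\ell^2} \cX^{\underset{A}{\boxtimes} n}$. The unitary ind-completion $\Hilb(\fgpBim(A))$ is closed under $\ell^2$-direct sums, and via Proposition \ref{prop:realizationofHilbertspaceobjectsfunctor} the fusion product $\underset{A}{\boxtimes}$ extends to it. Hence $\cX\in\Hilb(\fgpBim(A))$ forces $\cF(\eta)\in\Hilb(\fgpBim(A))$. Conversely, the orthogonal projection $P_1\in\rCorr_{A\-A}(\cF(\eta))$ onto the degree-one component is $A$-$A$-bimodular, so clause (3) of Theorem \ref{thm:relcommutchar<-} guarantees $P_1\cF(\eta) = \cX$ inherits the ind-property. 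Thus the question becomes whether $\cX\in\Hilb(\fgpBim(A))$.

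\emph{Step 3 (reduce to $A \underset{\eta_{ii}}{\otimes} A$).} For each $i\in I$, the assignment $a\otimes b \mapsto a\triangleright \xi_i \triangleleft b$ gives an isometric $A$-$A$-bimodular embedding $\phi_i:A\underset{\eta_{ii}}{\otimes} A \hookrightarrow \cX$ onto the cyclic sub-correspondence $\cX_i$, thanks to the inner product formula $\langle a\xi_i b, a'\xi_i b'\rangle = b^*\eta_{ii}(a^*a')b'$. If $\cX$ is an ind-object, the canonical decomposition $\cX \simeq \bigoplus_{K\in\Irr(A)} K\otimes \cH_\cX(K)$ forces every closed $A$-$A$-subcorrespondence (in particular $\cX_i$) to be complemented and ind, yielding $A\underset{\eta_{ii}}{\otimes} A \in \Hilb(\fgpBim(A))$ for each $i$. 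For the converse, I would use the right $\cL(\ell^2(I))$-action on the ambient correspondence $A\otimes_\eta(A\otimes\cL(\ell^2(I)))$ to present $\cX$ as a controlled $\ell^2$-combination of copies of the diagonal blocks, then conclude by the closure properties of $\Hilb(\fgpBim(A))$ established in Step 2.

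\emph{Main obstacle.} The crux is the converse of Step 3: the subcorrespondences $\cX_i$ are not mutually orthogonal when $\eta_{ij}(1)\neq 0$, so $\cX$ is strictly smaller than $\bigoplus_i \cX_i$ and one cannot directly transfer the ind-property summand by summand. Absorbing these off-diagonal correlations — either by diagonalizing the matrix-valued symbol $\eta(1)\in A\otimes\cL(\ell^2(I))$ via a polar decomposition, or by a Kasparov-stabilization-style argument identifying $\cX$ with a complemented sub-correspondence of countably many copies of $A\underset{\eta_{ii}}{\otimes} A$ — is the heart of the proof and the step where most care is required.
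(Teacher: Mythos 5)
Your Step 1 and the forward implication (ind-inclusion $\Rightarrow$ each $A \underset{\eta_{ii}}{\otimes} A$ is a Hilbert space object) are sound and close to what the paper does: one identifies $A \underset{\eta_{ii}}{\otimes} A$ with the closed subcorrespondence $\overline{A \triangleright \xi_i \triangleleft A}^{\|\cdot\|_A} = \overline{A \triangleright (X_i \triangleright \Omega) \triangleleft A}^{\|\cdot\|_A}$ of $\cE$ and uses the structure of realized ind-objects. The genuine gap is the converse, which you yourself flag as the ``main obstacle'' and leave unresolved: your plan requires first proving $\cX \in \Hilb(\fgpBim(A))$ from the diagonal blocks alone, and the fixes you gesture at (diagonalizing $\eta(1)$, a Kasparov-stabilization argument) are not carried out and are heavier machinery than the problem requires.

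The paper's proof sidesteps the non-orthogonality of the $\cX_i$'s entirely by never trying to decompose $\cX$. Instead it controls the cyclic correspondences generated by monomials in the semicircular elements: since $X_j X_i \triangleright \Omega = \xi_j \boxtimes \xi_i + \langle \xi_j, \xi_i \rangle$, one has the containment
\[
\overline{A \triangleright X_j X_i \triangleright \Omega \triangleleft A}^{\|\cdot\|_A} \subset (A \underset{\eta_{jj}}{\otimes} A) \underset{A}{\boxtimes} (A \underset{\eta_{ii}}{\otimes} A) \oplus A ,
\]
and the right-hand side is a Hilbert space object because realization is a unitary tensor functor (Proposition \ref{prop:realizationofHilbertspaceobjectsfunctor}). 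The off-diagonal entries $\eta_{ij}$ only ever enter through inner products, i.e.\ through lower-degree components that are already controlled, so a \emph{containment} in a realized Hilbert space object -- rather than an orthogonal decomposition into the $\cX_i$ -- is all that is needed. Iterating over monomials of arbitrary degree and passing to the limit over the increasing family of correspondences generated by polynomials in the $X_i$, one concludes that $\overline{\Phi(\eta) \triangleright \Omega}^{\|\cdot\|_A} = \cE$ is a Hilbert space object if and only if all the $A \underset{\eta_{ii}}{\otimes} A$ are. The soft facts doing the work are that $\Hilb(\fgpBim(A))$ is stable under fusion and direct sums, under passing to closed subcorrespondences of realizations, and under closures of increasing unions; those same facts would also repair your Step 2/3 architecture (writing $\cX = \overline{\sum_i \cX_i}$ as a closed image of $\bigoplus_{i \in F} \cX_i$ over finite $F$ and taking an increasing union), but as written the converse direction of your argument is missing.
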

\begin{proof}
    From \ref{prop:realizationofHilbertspaceobjectsfunctor}, we deduce that $A \underset{\eta_{ii}}{\otimes} A \in \Hilb(\fgpBim(A))$ if and only if
    \[ A \underset{\eta_{ii}}{\otimes}A \simeq |\rCorr_{A\-A}((-),A \underset{\eta_{ii}}{\otimes}A )| . \]
    We also have that $A \underset{\eta_{ii}}{\otimes}A \simeq \overline{A \triangleright \xi_i \triangleleft A}^{\|\cdot \|_A} = \overline{A \triangleright (X_i \triangleright \Omega )\triangleleft A}^{\|\cdot \|_A} $. Thus, A $\underset{\eta_{ii}}{\otimes} A \in \Hilb(\fgpBim(A))$ if and only if the $A\-A$-correspondence generated by $X_i \triangleright \Omega$ is a Hilbert space object. A simple computation show that
    \[\overline{A \triangleright X_j X_i \triangleright \Omega \triangleleft A}^{\|\cdot \|_A} \subset (A \underset{\eta_{jj}}{\otimes} A) \underset{A}{\boxtimes}(A \underset{\eta_{ii}}{\otimes}A) \oplus A .\]
    Since realization is a unitary tensor functor (Proposition \ref{prop:realizationofHilbertspaceobjectsfunctor}), the right hand side of the above inclusion is a Hilbert space object, and therefore the left hand side also is. By iteration, we conclude that the $A\-A$-correspondence generated by $P\triangleright \Omega$, where $P$ is a polynomial in the semi-circular elements, is a Hilbert space object if and only if the $A \underset{\eta_{ii}}{\otimes} A$'s are Hilbert space objects. Passing to the limit, we have that $\overline{\Phi(\eta)\triangleright \Omega}^{\| \cdot \|_A}$ is a Hilbert space object if and only if the $A\underset{\eta_{ii}}{\otimes} A$'s are.
\end{proof}

\begin{cor}\label{cor:characterizationindsemicircularintermsofcovariancematrix}
    Suppose the conditional expectation $E: \Phi(\eta) \to A$ is faithful. Then the $A$-valued semi-circular system $A \subset \Phi(\eta)$ is a ind-inclusion if and only if $\eta$ can be written
    \[ \eta_{ij}(a) = \langle \xi_i, a \triangleright \xi_j \rangle \]
    for a family $\{\xi_i\}_{i \in I}$ of vectors in an ind-object $H \in \Hilb(\fgpBim(A))$, satisfying
    \[ \sum_{j \in I} \| \< \xi_i, a \triangleright \xi_j \> \|^2 \leq C \|a\|^2 \ \forall \ a \in A , \ \forall \  i \in I, \]
    for some $C \geq 0$.
\end{cor}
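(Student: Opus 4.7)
The approach is to combine Theorem~\ref{thm:characindsemicirc}, which reduces the ind-inclusion property to each diagonal $A \underset{\eta_{ii}}{\otimes} A$ lying in $\Hilb(\fgpBim(A))$, with the lemma at the start of Subsection~\ref{subs:semicircularsystems} that identifies covariance matrices with matrix-valued inner products of vectors in a correspondence.

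For the \emph{only if} direction, the plan is simply to take $H := \cF(\eta)$. Because $\Omega$ is cyclic for the $\Phi(\eta)$-action and $E$ is faithful, the $A$-$A$-correspondence $\cE$ associated to the inclusion $A \subset \Phi(\eta)$ is canonically identified with $\cF(\eta)$, so the ind-inclusion hypothesis immediately places $\cF(\eta)$ in $\Hilb(\fgpBim(A))$. Setting $\xi_i := 1 \otimes e_{i,i_0} \in \cX \subset \cF(\eta)$, the identity $\langle \xi_i, a \triangleright \xi_j\rangle = \eta_{ij}(a)$ follows from the explicit computation of the inner product on $\cX$ noted earlier, and the summability bound is automatic since $\eta$ is a bounded completely positive map.

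For the \emph{if} direction, the cited lemma first confirms that $\eta_{ij}(a) := \langle \xi_i, a \triangleright \xi_j\rangle$ yields a genuine covariance matrix, so that $\Phi(\eta)$ is well-defined. By Theorem~\ref{thm:characindsemicirc}, one then needs $A \underset{\eta_{ii}}{\otimes} A \in \Hilb(\fgpBim(A))$ for every $i$. The assignment $a \otimes b \mapsto a \triangleright \xi_i \triangleleft b$ extends to an $A$-$A$-bimodular isometric embedding realizing $A \underset{\eta_{ii}}{\otimes} A$ as the closed cyclic sub-bimodule $\overline{A \triangleright \xi_i \triangleleft A}^{\| \cdot \|_A}$ of $H$.

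The main---and most technical---step is to verify that a closed cyclic $A$-$A$-sub-bimodule of an ind-object is again an ind-object. The plan is to use the orthogonal decomposition $H \simeq \bigoplus_{K \in \Irr(A)}^{\ell^2} K \otimes H(K)$ and write $\xi_i = \sum_K \xi_i^{(K)}$ componentwise. Since the $A$-$A$-action preserves each summand, the cyclic sub-bimodule splits as
\[ \overline{A \triangleright \xi_i \triangleleft A}^{\| \cdot \|_A} = \bigoplus_{K \in \Irr(A)}^{\ell^2} \overline{A \triangleright \xi_i^{(K)} \triangleleft A}^{\| \cdot \|_A}. \]
For each simple $K$, the identification $\rCorr_{A\-A}(K \otimes H(K)) \simeq \cL(H(K))$ furnished by Corollary~\ref{cor:relcommutcharact->} together with the simplicity of $K$ identifies each piece with a subobject of the form $K \otimes V_{i,K}$, for a closed subspace $V_{i,K} \subset H(K)$. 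Reassembling places $A \underset{\eta_{ii}}{\otimes} A$ in $\Hilb(\fgpBim(A))$ and concludes the argument.
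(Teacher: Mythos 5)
The paper states this corollary with no written proof: it is meant to follow directly from Theorem~\ref{thm:characindsemicirc}, the lemma opening Subsection~\ref{subs:semicircularsystems}, and the identity $\eta_{ij}(a)=\langle \xi_i, a\triangleright \xi_j\rangle$ for $\xi_i=1\otimes e_{i,i_0}\in\cX$. Your outline follows exactly that intended route (for the \emph{only if} direction, the identification $\cE\simeq\cF(\eta)$ via cyclicity of $\Omega$ and faithfulness of $E$ is correct), and your attempt to actually justify the step that the paper leaves implicit is welcome. Two points, however, need repair.

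First, the summability bound is \emph{not} ``automatic since $\eta$ is a bounded completely positive map.'' The quantity $\sum_j\|\eta_{ij}(a)\|^2$ is the sum of the squared norms of the entries of one row of the operator matrix $\eta(a)$; boundedness of $\eta(a)$ controls $\bigl\|\sum_j\eta_{ij}(a)\eta_{ij}(a)^*\bigr\|$, but for operator-valued (as opposed to scalar-valued) entries the sum of the norms can strictly dominate the norm of the sum, with no uniform comparison when $I$ is infinite. Since this quantity depends only on $\eta$ and not on the choice of realizing vectors, the bound is an honest additional condition that must be verified (or extracted from the hypotheses), not a formality. Second, and more seriously, your ``main technical step'' rests on the equality $\overline{A\triangleright\xi_i\triangleleft A}^{\|\cdot\|_A}=\bigoplus_K^{\ell^2}\overline{A\triangleright\xi_i^{(K)}\triangleleft A}^{\|\cdot\|_A}$. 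The inclusion $\subseteq$ is clear, but $\supseteq$ amounts to invariance of the closed cyclic submodule under the isotypic projections $P_K$, equivalently that each component $\xi_i^{(K)}=P_K\xi_i$ already lies in $\overline{A\triangleright\xi_i\triangleleft A}^{\|\cdot\|_A}$. Closed submodules of Hilbert C*-modules need not be complemented, and a closed submodule with dense image under each $P_K$ need not contain, let alone equal, the $\ell^2$-sum of those images; so this step requires an argument (e.g.\ exploiting that each $K$ is fgp and simple to produce the isotypic projections from finitely many adjointable intertwiners landing in the submodule). To be fair, the paper itself invokes the same principle --- that a closed sub-correspondence of a realized Hilbert space object is again one --- without proof inside Theorem~\ref{thm:characindsemicirc}, so you are at the same level of rigor as the source; but as written your justification of it does not close the gap.
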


\begin{ex}
    Let $\{\alpha_i\}_{i\in I}$ be a countable collection of automorphisms on a unital C*-algebra $A$ with faithful unique trace $\tau$, and consider the covariance matrix
    $$
    \eta=\mathsf{diag}(\eta_i),
    $$
    where $\eta_i= \alpha_i + \alpha_i^{-1}$. 
    Then, by direct computation we see that for every $i \in I$ and all $x,y\in A$ 
    $$\tau(x\eta_i(y)) = \tau(\eta_i(x)y),$$
    fulfilling the hypotheses of \cite[Theorem 5.2]{2024arXiv240918161H}. 
    Then, the inclusion $A\subset \Phi(\eta)$ has a faithful conditional expectation and is C*-discrete. 
\end{ex}

\subsection{C*-algebraic factorization homology}
\label{subs:facthom}

Factorization homology, as developed in \cite{MR3431668}, is a generalized homology theory for topological manifolds. It is a homology theory in the sense that it is functorial and satisfy an excision property, and is generalized in the sense that it assigns categories as invariants of manifolds. In \cite{2023arXiv230407155H}, it was proven that as a target for factorization homology one can take the symmetric monoidal category $\Clin$, a certain category of C$^*$-categories, equipped with the max tensor product $\underset{\max}{\boxtimes}$ as introduced in \cite{MR4162123}. Our goal here is not to give a detailed account on that subject, but briefly explain how it can provide examples of non-connected C$^*$-algebra objects. 

We will be interested in factorization homology for surfaces. A factorization homology theory with values in $(\Clin, \underset{\max}{\boxtimes})$ is a monoidal functor $\cF: \Surf \to \Clin$, where the source category is the category of surfaces with embeddings as morphisms and disjoint unions as monoidal structure. The functoriality of $\cF$ is expressed in 2-categorical terms: smooth oriented embeddings correspond to unitary functors between C*-tensor categories, and isotopies between embeddings correspond to unitary natural isomorphisms between unitary functors. This implies, in particular, that if $D$ denotes the compact oriented disk, then
\[ \cF(D) =: \cC \]
is a unitarily braided C*-tensor category. Also, $\cF$ must be functorial with respect to smooth oriented embeddings $\Sigma_1 \to \Sigma_2$, and due to the co-completeness of $\Clin$ (see \cite{MR4162123}), $\cF$ is completely determined by $\cC$. For this reason, we call $\cF$ the factorization homology with coefficients in $\cC$, and the notation
\[ \cF(\Sigma) =: \int_\Sigma \cC \]
is used.

The values of factorization homology are determined by universal properties, and can in principle be difficult to compute. But assuming that $\cC$ is a unitary tensor category equipped with a unitary braiding $\tau$, there are very concrete computations.

\begin{thm}[\cite{2023arXiv230407155H}, Theorem 4.5]
    Let $\Sigma$ be an oriented compact surface with non-empty boundary. Each choice of boundary component $B \subset \Sigma$ induces canonically a structure of cyclic right $\cC$-module C*-category on $\int_\Sigma \cC$. Thus, there is a C*-algebra object $\bbD_{\Sigma} \in \Vec(\cC)$ such that
    \[ \int_\Sigma \cC \simeq \bbD_{\Sigma} \- \Mod_\cC , \]
    as cyclic right $\cC$-module C*-categories.
    \label{thm:C*algebrasfromfacthom}
\end{thm}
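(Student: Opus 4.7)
The plan is to first unpack the right $\cC$-module structure on $\int_\Sigma \cC$ induced by the choice of boundary component $B$, verify cyclicity by exhibiting a distinguished generator, and finally invoke the standard module-category/C*-algebra-object correspondence to produce $\bbD_\Sigma$.

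First, I would use the functoriality of $\int (-) \cC$ with respect to oriented embeddings of surfaces. Choose a collar neighborhood $B \times [0,1) \hookrightarrow \Sigma$ of the distinguished boundary component $B$. Inside this collar one can embed a small open disk $D$ disjoint from the interior of $\Sigma$ in such a way that the complement of a slightly larger neighborhood of $B$ is fixed. This produces a codimension-zero embedding $\Sigma \sqcup D \hookrightarrow \Sigma$, and by functoriality of factorization homology and its monoidal property one gets a unitary bifunctor
\[
\int_\Sigma \cC \;\underset{\max}{\boxtimes}\; \cC \;=\; \int_\Sigma \cC \;\underset{\max}{\boxtimes}\; \int_D \cC \;\longrightarrow\; \int_\Sigma \cC .
\]
Isotopies between successive disk insertions near $B$ give, via the 2-functoriality of $\int(-)\cC$, the associator and unitor natural isomorphisms for a right $\cC$-module structure; the coherence axioms follow from the coherence of isotopies in $\Surf$. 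The unitarity of this module structure is inherited from that of the tensor structure on $\cC$ and the fact that $\int(-)\cC$ takes values in $\Clin$.

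Second, I would identify a cyclic generator. The embedding $\emptyset \hookrightarrow \Sigma$ gives a distinguished object $v_\Sigma \in \int_\Sigma \cC$, namely the image of the tensor unit of the initial coefficient category; equivalently, the vacuum object coming from the $\emptyset$-indexed factorization. Using the excision property for factorization homology, applied to the decomposition of $\Sigma$ into a collar of $B$ and its complement, one checks that any object of $\int_\Sigma \cC$ can be obtained, up to idempotent completion and finite direct sums, by acting on $v_\Sigma$ with objects of $\cC$ via the module structure constructed above. This is exactly the cyclicity property: the right $\cC$-module C*-category $\int_\Sigma \cC$ is generated by $v_\Sigma$.

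Third, once cyclicity with a chosen generator $v_\Sigma$ is in place, the passage to a C*-algebra object is the standard one (cf.\ \cite{JP17,MR3948170}): define
\[
\bbD_\Sigma(X) \;:=\; \Hom_{\int_\Sigma \cC}\!\bigl( v_\Sigma \cdot X,\, v_\Sigma \bigr), \qquad X \in \cC,
\]
with the multiplication $\bbD_\Sigma^2$ induced by pre- and post-composition with the module associator, and the $*$-structure $j^{\bbD_\Sigma}$ obtained from the dagger structure on $\int_\Sigma \cC$ combined with the rigidity of $\cC$. That each fiber $\bbD_\Sigma(\overline{X} \boxtimes X)$ is a C*-algebra is automatic from the C*-category structure on $\int_\Sigma \cC$, and the equivalence
\[
\int_\Sigma \cC \;\simeq\; \bbD_\Sigma\-\Mod_\cC
\]
of cyclic right $\cC$-module C*-categories follows by sending $v_\Sigma \cdot X$ to the free module $\bbD_\Sigma \odot X$ and extending by idempotent completion and direct sums.

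The main obstacle I expect is step two: verifying cyclicity genuinely uses excision for factorization homology (and, implicitly, that $\cC$ is a UTC so that compact generation behaves well), and one must check that the generator $v_\Sigma$ together with the module action reproduces, up to the Cauchy/idempotent completion built into $\Clin$, every object of $\int_\Sigma \cC$; the rest is essentially bookkeeping through the reconstruction dictionary between cyclic module C*-categories and C*-algebra objects in $\Vec(\cC)$.
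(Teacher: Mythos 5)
This statement is imported verbatim from \cite{2023arXiv230407155H} (Theorem 4.5); the present paper gives no proof of it, only the citation. Your sketch --- collar insertion of a disk near $B$ to define the right $\cC$-module structure, excision plus isotopy of disk insertions into the collar to establish cyclicity with the vacuum object $v_\Sigma$ as generator, and then the standard cyclic-module-C*-category $\leftrightarrow$ C*-algebra-object dictionary with $\bbD_\Sigma(X) = \Hom(v_\Sigma \cdot X, v_\Sigma)$ --- is the standard argument and matches the approach of the cited reference, so there is nothing to compare against within this paper itself.
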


\begin{ex}
    If $\Sigma = Ann$ is the annulus, then
    \[ \bbD_{Ann} = \bigoplus_{X \in \Irr(\cC)}^{c_0} \overline{X} \boxtimes X , \]
    i.e., as a functor $\cC^{\op} \to \Vec$, 
    \[ \bbD_{Ann}(Y) = \bigoplus_{X \in \Irr(\cC)}^{c_0} \cC(Y, \overline{X} \boxtimes X) . \]
    The algebra structure of $\bbD_{Ann}$ can be described as follows. It is determined by the morphisms
    \[ \overline{X} \boxtimes X \boxtimes \overline{Y} \boxtimes Y \overset{\tau_{\overline{X}\boxtimes X,\overline{Y}} \boxtimes \id_Y}{\longrightarrow} (\overline{Y} \boxtimes \overline{X}) \boxtimes (X \boxtimes Y) \longrightarrow \overline{W} \boxtimes W , \] 
    for $X,Y,W \in \Irr(\cC)$, where the last arrow is given by irreducible decomposition. Observe that the vector space dimension of $\bbD_{Ann}(\bbOne)$ is the cardinality of $\Irr(\cC)$, so that $\bbD_{Ann}$ is in general not connected nor locally finite. More generally, the C$^*$-algebra objects $\bbD_{\Sigma}$, for $\partial \Sigma \neq \emptyset$, can be described as sort of twisted tensor products of copies of $\bbD_{Ann}$.
\end{ex}

For $\Sigma$, as above, with a fixed non empty boundary component $B \subset \partial \Sigma$, consider the associated mapping class group $\Gamma(\Sigma,B)$. It is the group of oriented self-diffeomorphisms of $\Sigma$ fixing $B$ pointwise, modulo isotopies.

Given now an action $\cC \to \fgpBim(A)$ on a unital C*-algebra $A$ that has trivial center, we have the associated crossed products $A \rtimes \bbD_{\Sigma}$. 

\begin{thm}
    There is a canonical action of $\Gamma(\Sigma,B)$ on $A \rtimes \bbD_\Sigma$ by $*$-automorphisms such that
    \[ A \subset (A \rtimes \bbD_\Sigma)^{\Gamma(\Sigma,B)} . \]
    \label{thm:crossedproductswithmappingclassgroupactions}
\end{thm}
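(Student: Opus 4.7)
The plan is to realize the action in three steps: lift from diffeomorphisms of $\Sigma$ to autoequivalences of $\int_\Sigma \cC$, descend to $*$-automorphisms of the C*-algebra object $\bbD_\Sigma$ via Theorem \ref{thm:C*algebrasfromfacthom}, and then apply the functoriality of the crossed product from Theorem \ref{thm: thereconstructiontheorem}.

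First, by the 2-functoriality of factorization homology $\cF$, any oriented self-diffeomorphism $f: \Sigma \to \Sigma$ fixing $B$ pointwise induces a unitary self-equivalence $\cF(f): \int_\Sigma \cC \to \int_\Sigma \cC$. Because $f$ fixes $B$ pointwise and the right $\cC$-module structure on $\int_\Sigma \cC$ is induced by gluing a collar on $B$, the functor $\cF(f)$ is strictly compatible with the $\cC$-action, and moreover preserves the cyclic generator (distinguished basepoint) coming from $B$. Isotopies rel.\ $B$ between diffeomorphisms lift to unitary monoidal natural isomorphisms between the induced $\cC$-module functors fixing the basepoint. Consequently, the assignment $f \mapsto \cF(f)$ descends to a group homomorphism from $\Gamma(\Sigma,B)$ into the group of cyclic $\cC$-module autoequivalences of $\int_\Sigma \cC$, taken modulo basepoint-preserving unitary natural isomorphisms.

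Next, by Theorem \ref{thm:C*algebrasfromfacthom}, the assignment $\bbD \mapsto \bbD\text{-}\Mod_\cC$ gives an equivalence between C*-algebra objects in $\Vec(\cC)$ with a chosen cyclic generator and pointed cyclic right $\cC$-module C*-categories. Under this equivalence, basepoint-preserving $\cC$-module self-equivalences of $\int_\Sigma \cC \simeq \bbD_\Sigma\text{-}\Mod_\cC$ correspond bijectively to $*$-automorphisms of $\bbD_\Sigma$, and basepoint-preserving unitary natural isomorphisms correspond to equality of such automorphisms. Thus the previous step yields a group homomorphism
\[
\rho: \Gamma(\Sigma,B) \longrightarrow \Aut_{C^*\text{-alg}}(\bbD_\Sigma), \qquad f \longmapsto \phi_f.
\]

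Now apply Theorem \ref{thm: thereconstructiontheorem}: the crossed product is functorial in the C*-algebra object argument (with respect to $*$-preserving ucp maps, hence in particular $*$-automorphisms), so each $\phi_f$ yields a $*$-automorphism
\[
\alpha_f := \id_A \rtimes \phi_f : A \rtimes \bbD_\Sigma \longrightarrow A \rtimes \bbD_\Sigma,
\]
and $f \mapsto \alpha_f$ is a group homomorphism. It remains to verify that $A$ lies in the fixed-point subalgebra. The inclusion $A \hookrightarrow A \rtimes \bbD_\Sigma$ factors through $A \otimes \bbD_\Sigma(\bbOne)$ via $a \mapsto a \otimes 1_{\bbD_\Sigma(\bbOne)}$. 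On $A \otimes \bbD_\Sigma(\bbOne)$, the map $\alpha_f$ restricts to $\id_A \otimes (\phi_f)_\bbOne$, where $(\phi_f)_\bbOne$ is the $*$-automorphism of the unital C*-algebra $\bbD_\Sigma(\bbOne)$ induced by $\phi_f$. Since any $*$-automorphism preserves units, $(\phi_f)_\bbOne(1) = 1$, so $\alpha_f(a \otimes 1) = a \otimes 1$. Hence $A \subset (A \rtimes \bbD_\Sigma)^{\Gamma(\Sigma,B)}$.

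The main obstacle will be the middle step: spelling out precisely what ``pointed'' or ``basepoint-preserving'' means for cyclic $\cC$-module C*-categories in order to establish the equivalence with $*$-automorphisms of $\bbD_\Sigma$, and checking that the isotopy invariance at the diffeomorphism level really does descend to an equality (rather than just an inner equivalence) of C*-algebra-object automorphisms. Given the framework of \cite{2023arXiv230407155H}, this is built into the recognition principle for cyclic module C*-categories underlying Theorem \ref{thm:C*algebrasfromfacthom}.
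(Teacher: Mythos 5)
Your proposal is correct and follows essentially the same route as the paper, which simply derives the theorem from the mapping class group action on the C*-algebra object $\bbD_\Sigma$ established in \cite[Proposition 5.13]{2023arXiv230407155H} together with functoriality of the crossed product; your three steps unpack exactly that citation plus the functoriality argument, and your unit-preservation check for $A\subset(A\rtimes\bbD_\Sigma)^{\Gamma(\Sigma,B)}$ matches the intended argument. The subtlety you flag about isotopies descending to genuine equalities of automorphisms of $\bbD_\Sigma$ is precisely what the cited proposition is responsible for, so your sketch is faithful to the paper's (very terse) proof.
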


Theorem \ref{thm:crossedproductswithmappingclassgroupactions} follows from \cite[Proposition 5.13]{2023arXiv230407155H}, and functoriality of the crossed products. It is analogous to \cite[Corollary 5.14]{2023arXiv230407155H}.

The conditional expectations $A \rtimes \bbD_\Sigma \to A$ of interest would be the $\Gamma(\Sigma,B)$-invariant ones. Those correspond to $\Gamma(\Sigma,B)$-invariant states on $\bbD_\Sigma(\bbOne)$. We do not know if they exist in general, but we have an example; for the annulus, the corresponding mapping class group is then isomorphic to $\bbZ$.

\begin{thm}[\cite{2023arXiv230407155H}, Theorem 5.15]
    The projection
    \[ \bbD_{Ann} \simeq \bigoplus_{X \in \Irr(\cC)}^{c_0} \overline{X} \boxtimes X \to \bbOne \]
    induces a $\bbZ$-equivariant state on $\bbD_{Ann}(\bbOne)$. 
\end{thm}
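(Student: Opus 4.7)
The plan is to unwind the factorization homology description of $\bbD_{Ann}$ so that both the projection and the $\bbZ$-action become completely explicit, and then observe that equivariance reduces to the fact that the ribbon twist $\theta_{\bbOne}$ on the tensor unit is trivial.

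First, I would make the state explicit. By the formula for $\bbD_{Ann}$ recalled just before the theorem, the ground C*-algebra is
\[
\bbD_{Ann}(\bbOne) \;\simeq\; \bigoplus_{X\in\Irr(\cC)}^{c_0} \cC(\bbOne,\overline{X}\boxtimes X),
\]
and each summand is one-dimensional, spanned by the normalized coevaluation $d_X^{-1/2}R_X$. The projection $\bbD_{Ann}\to\bbOne$ of the statement is the morphism of C*-algebra objects in $\Vec(\cC)$ picking out the $X=\bbOne$ summand (here $\bbOne$ denotes the trivial C*-algebra object with $\bbOne(\bbOne)=\bbC$ and $\bbOne(Y)=0$ for $Y\neq\bbOne$). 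Evaluating at $\bbOne$ yields the state
\[
\omega:\bbD_{Ann}(\bbOne)\to\bbC,\qquad \omega\Bigl(\sum_X \lambda_X\,d_X^{-1/2}R_X\Bigr)=\lambda_{\bbOne},
\]
which is well-defined and faithful on the $X=\bbOne$ component.

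Next, I would identify the $\bbZ$-action. By Theorem~\ref{thm:crossedproductswithmappingclassgroupactions} (and the factorization homology results from \cite{2023arXiv230407155H} used to prove it), the mapping class group $\Gamma(Ann,B)\simeq\bbZ$ acts on $\bbD_{Ann}$ by automorphisms of C*-algebra objects, with the generating Dehn twist corresponding to the ribbon/twist structure of the unitarily braided UTC $\cC$. Under the identification $\bbD_{Ann}\simeq\bigoplus_{X\in\Irr(\cC)}\overline{X}\boxtimes X$, the Dehn twist acts on the summand labeled by $X$ as $\id_{\overline{X}}\boxtimes\theta_X$, where $\theta_X\in\cC(X)$ is the twist of $X$. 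On the one-dimensional space $\cC(\bbOne,\overline{X}\boxtimes X)$ this is multiplication by a scalar (the twist eigenvalue of $X$, which I also denote $\theta_X\in\bbC$), so the $\bbZ$-action on $\bbD_{Ann}(\bbOne)$ preserves each irreducible summand and acts diagonally by the scalars $\{\theta_X\}_{X\in\Irr(\cC)}$.

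Equivariance is then immediate: since $\theta_{\bbOne}=1$, the $\bbZ$-action on the $X=\bbOne$ summand is trivial, while the action on the complementary summands is absorbed by the projection. Concretely, for every generator $g\in\bbZ$ and every $a=\sum_X\lambda_X\,d_X^{-1/2}R_X\in\bbD_{Ann}(\bbOne)$,
\[
\omega(g\cdot a)=\omega\Bigl(\sum_X \theta_X^{\,g}\lambda_X\,d_X^{-1/2}R_X\Bigr)=\theta_{\bbOne}^{\,g}\lambda_{\bbOne}=\lambda_{\bbOne}=\omega(a),
\]
which is exactly $\bbZ$-equivariance.

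The only nontrivial step is the identification of the Dehn twist action with the ribbon twist; this is the main obstacle and is supplied by the excursion through factorization homology reviewed in Section~\ref{subs:facthom} (specifically the reference to \cite[Proposition 5.13]{2023arXiv230407155H} used in Theorem~\ref{thm:crossedproductswithmappingclassgroupactions}). Everything else is a direct unpacking: the fact that $\cC(\bbOne,\overline{X}\boxtimes X)$ is one-dimensional forces the action to be scalar, and $\theta_{\bbOne}=1$ forces the scalar on the relevant summand to be trivial.
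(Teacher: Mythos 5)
The paper itself contains no proof of this statement --- it is imported verbatim from \cite{2023arXiv230407155H}, Theorem 5.15 --- so there is no internal argument to compare yours against. That said, your reconstruction follows the expected route and is essentially correct in outline: identify $\bbD_{Ann}(\bbOne)$ with $\bigoplus_{X \in \Irr(\cC)}^{c_0}\cC(\bbOne,\overline{X}\boxtimes X)$, each summand one-dimensional, let $\omega$ be the coefficient functional of the $\bbOne$-summand, and observe that the Dehn twist generator acts diagonally by the twist scalars $\theta_X$ of the (canonical) ribbon structure on the unitarily braided category $\cC$, with $\theta_{\bbOne}=1$.

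Two points deserve more care. First, the theorem asserts that the projection induces a \emph{state}, and you never verify positivity or unitality of $\omega$. Unitality holds because the unit of $\bbD_{Ann}(\bbOne)$ is precisely the inclusion of the $\bbOne$-summand. Positivity requires using the grading of the product described in the Example preceding the theorem: the $\bbOne$-component of $a^*a$ receives contributions only from pairing the $X$-summand of $a^*$ against the $X$-summand of $a$, since $\cC(\bbOne,\overline{X}\boxtimes Y)=0$ unless $X\simeq Y$ for $X,Y\in\Irr(\cC)$; each such contribution is a positive multiple of $|\lambda_X|^2$, so $\omega(a^*a)\geq 0$. This is a short computation, but it is exactly the part of the statement that upgrades ``linear functional'' to ``state'', so it should appear explicitly. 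Second, the identification of the Dehn twist action with $\id_{\overline{X}}\boxtimes\theta_X$ on each summand (hence with the scalar $\theta_X$ on the one-dimensional multiplicity space) is the genuine content of the theorem; you correctly flag it as the one nontrivial step and defer it to the factorization-homology computation in the cited reference, which is consistent with how the present paper treats the result. With the positivity check supplied, your argument is a faithful reconstruction of the intended proof.
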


\section{von Neumann vs C*-discreteness}
The discrete property of a unital inclusion of operator algebras $(C\subset D,E)$ have been formulated in terms of a quasi-regularity condition for both subfactors and inclusions of C*-algebras. Here, $E:D\to C$ is a conditional expectation; ie a (normal) ucp map onto $C$. Namely, an inclusion is quasi-regular if certain intermediate $*$-algebra whose structure is inherited from a unitary tensor category (UTC) is dense in the appropriate sense. In this section, we develop the framework for which a direct comparison of these two notions---von Neumann-discreteness and C*-discreteness---imply one another. See \cite{2023arXiv230505072H} and \cite{MR3948170} for complete details about discrete inclusions. 

Let $\cC$ be a UTC, $A$ be a unital C*-algebra, and $N$ be a  $\rm{II}_1$-factor with its unique trace $\tr.$ Recall that an action of $\cC$ on $(N, \tr)$ (or on $A$) is given by a unitary tensor functor $F'':N\to \spbfBim(N)$ (or $F:\cC\to\fgpBim(A)$). 
Here, $\spbfBim(N)$ are the finitely generated projective (\emph{aka} bifinite) $N$-$N$ bimodules which are \emph{spherical}; ie whose left and right von Neumann dimensions match. (See \cite[Example 2.10]{MR4139893} for a detailed description of $\spbfBim(N)$.) We say that {\bf these actions are compatible} if and only if the following conditions hold: 
\begin{enumerate}[label=($\cH$\arabic*)]
    \item There exists a fully faithful unitary tensor functor $$\cH:\fgpBim(A)\to \spbfBim(N)$$ such that 
    $$\cH\circ F\overset{\eta}{\cong} F'',$$
    where $\eta$ is some monoidal natural unitary isomorphism. (Later in this manuscript, we will often suppress $\eta$ and assume it is the identity.) Here, the monoidality is with respect to $\{(\cH F)^2_{X,Y}:= \cH(F^2_{X,Y})\circ \cH^2_{FX,FY}\}_{X,Y\in\cC}$. Namely, 
    $$
    \eta_{X\otimes Y}\circ (\cH F)^2_{X,Y} = (F''_{X,Y})^2\circ (\eta_X\boxtimes \eta_Y).
    $$

    \item\label{Hilb:Dense} There is a collection of contractive $A$-$A$ bimodular injective maps   
    $$
    \{\Omega^X:F(X)\hookrightarrow \left((\cH\circ F)(X)\right)^\circ\}_{X\in\cC}
    $$
    with $\Omega^X[F(X)]\subset (\cH\circ F)(X)$ dense for every $X\in \cC.$ 
    Here, $\left((\cH\circ F)(X)\right)^\circ$ denotes the dense subspace of left (equivalently right) $N$ \emph{bounded vectors} $\xi\in (\cH\circ F)(X)$, which are defined by the property that the map $N\ni n\mapsto  n\rhd\xi\in (\cH\circ F)(X)$ extends to a bounded map on $L^2(N,\tr)$ \cite{Bisch97} . 

    \item\label{Hilb:Tensor} The family $\{\eta_Z\circ \Omega^Z:F(Z)\to F''(Z)^\circ\}_{Z\in\cC}$ of contractive $A$-$A$ bimodular maps is compatible with the actions $F$ and $F''$. This is, for all $X,Y\in \cC$ the following identity holds: 
    $$
        (F''_{X,Y})^2\circ(\eta_X\boxtimes \eta_Y)\circ (\Omega^X\boxtimes\Omega^Y) = \eta_{X\otimes Y}\circ\Omega^{X\otimes Y}\circ F^2_{X,Y},
    $$
    the equality holding at the level of bounded vectors.

    \item\label{Hilb:F1} We assume $(\eta_1\circ\Omega^1)[F(\bbOne_\cC)]\subset N\Omega = (F''(1))^\circ$ unitally, and that in this representation, $A''\subset \End(L^2(N,\tr))$ is a factor. This is, $\Omega^1$ is a map such that $(\eta_1\circ \Omega^1)(1_A)= \Omega$ is the cyclic and separating vector of the left action of $N$ on $F''(1)= L^2(N,\tr).$ 
    We then identify $A\subset N\subset \End(L^2(N,\tau))$ unitally, and get a ${\rm II}_1$-subfactor $A''\subseteq N$. 
\end{enumerate}
\noindent The functor $\cH$ is meant to encode the properties of a GNS construction that is coherent with the dynamical and topological data. 

\begin{ex}\label{ex:GJSHilbertification}
An important class of examples arises from the Guionnet-Jones-Shlyakhtenko (GJS) construction. 
Using these ideas, in \cite{BHP12} it was shown that given any unitary tensor category $\cC$ there is a $\rm{II}_1$-factor $N$ (which can be taken to be $L\bbF_\infty$) and an outer action $F'':\cC\to\fgpBim(N).$ 
Moreover, in \cite{MR4139893} the second-named author and Hartglass established a similar result for C*-algebras, constructing a monotracial unital simple C*-algebra $(A,\tr)$ from any UTC $\cC$, together with an outer action $F:\cC\to \fgpBimtr(A).$ 
The superscript $\tr$ here denotes that the action of $\cC$ on $A$ is by {\bf bimodules which are compatible with the trace}; this is for each $c\in\cC$ and each $\xi,\eta\in F(c)$ the identity $$\tr\left(\langle \eta,
\xi\rangle_A\right)= \tr\left({_A}\langle \xi, \eta\rangle\right)$$ holds. 
This is a technical assumption used to extend the left and right $A$-actions to normal left and right $N$-actions. 
Moreover, they also constructed a \emph{Hilbertification functor} $$(-\boxtimes_N L^2(N)):\fgpBimtr(A)\to \spbfBim(N),$$
which is a fully faithful unitary tensor functor taking a completion in $2$-norm with respect to the trace, witnessing that $F$ and $F''$ are manifestly compatible actions. 
\end{ex}

\begin{lem}\label{lem:A''isN}
    Identifying $A$ with its image in $L^2(N,\tr)$ under $\Omega_1$ we have that 
    $$A''=N\subset\End(L^2(N,\tr)).$$
\end{lem}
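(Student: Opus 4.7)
The plan is to reduce the lemma to showing that $A\Omega$ is dense in $L^2(N,\tr)$, after which a routine modular-theory argument upgrades the inclusion $A''\subseteq N$ (already granted by \ref{Hilb:F1}) to an equality.

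\textbf{Step 1: Density of $A\Omega$.} I will specialize \ref{Hilb:Dense} to $X=\bbOne_\cC$. Since $F$, $F''$, and $\cH$ are unitary tensor functors (between strictified categories), their unitors identify $F(\bbOne_\cC)$ with the monoidal unit $A\in\fgpBim(A)$ and $F''(\bbOne_\cC)$ with the monoidal unit $L^2(N,\tr)\in\spbfBim(N)$; the natural isomorphism $\eta_1$ from (H1) then identifies $(\cH\circ F)(\bbOne_\cC)$ with $L^2(N,\tr)$. Applying \ref{Hilb:Dense} yields that $(\eta_1\circ\Omega^1)[A]$ is dense in $L^2(N,\tr)$. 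Using the $A$-$A$-bimodularity of $\Omega^1$ together with the normalization $(\eta_1\circ\Omega^1)(1_A)=\Omega$ from \ref{Hilb:F1}, I would compute
\[
(\eta_1\circ\Omega^1)(a)=(\eta_1\circ\Omega^1)(a\cdot 1_A)=a\rhd(\eta_1\circ\Omega^1)(1_A)=a\Omega,
\]
so that $A\Omega$ is dense in $L^2(N,\tr)$.

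\textbf{Step 2: Modular conjugations coincide.} Writing $B:=A''\subseteq N$, one has $B\Omega\supseteq A\Omega$ dense in $L^2(N,\tr)$, and $\Omega$ is cyclic and separating for $B$ because it is separating for $N$. The restriction $\tr|_B$ is a faithful tracial state and $\|b\Omega\|^2=\tr(b^*b)$ for $b\in B$, so the isometry $b\mapsto b\Omega$ identifies the standard form $L^2(B,\tr|_B)$ with $\overline{B\Omega}=L^2(N,\tr)$. The Tomita modular conjugation $J_B$ is then the anti-unitary extension of the involution $b\Omega\mapsto b^*\Omega$, $b\in B$; the modular conjugation $J_N$ is defined by the same formula on the larger dense subspace $N\Omega$. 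Both anti-unitaries agree on the dense subset $B\Omega$, hence $J_B=J_N$ on all of $L^2(N,\tr)$.

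\textbf{Step 3: Commutant comparison.} By the Tomita commutation theorem, $B'=J_B B J_B=J_N B J_N\subseteq J_N N J_N=N'$. Taking commutants yields $N=N''\subseteq B''=B$, which combined with $B\subseteq N$ gives $A''=B=N$.

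The only delicate point is Step 1, where the interplay between the $A$-$A$-bimodular maps $\Omega^X$, the coherences (H1)–(H3), and the cyclic-vector normalization from \ref{Hilb:F1} must be carefully tracked to extract the concrete identity $(\eta_1\circ\Omega^1)(a)=a\Omega$. Steps 2 and 3 are a standard application of Tomita–Takesaki theory for a tracial von Neumann algebra acting on its $L^2$-space with cyclic separating tracial vector.
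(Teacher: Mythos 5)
Your proof is correct, but it takes a genuinely different route from the paper's in the key step. Both arguments reduce to the same fact: once $A\Omega$ is dense in $L^2(N,\tr)$ (your Step 1, which correctly combines \ref{Hilb:Dense} at $X=\bbOne_\cC$ with the bimodularity and normalization from \ref{Hilb:F1}), a von Neumann subalgebra $B=A''\subseteq N$ with $\overline{B\Omega}=L^2(N,\tr)$ must equal $N$. The paper proves this last step by invoking Kosaki's result to get the unique trace-preserving normal conditional expectation $\tilde E:N\to A''$, observing that its implementing Jones projection is the identity, and concluding $\tilde E=\id$; as the paper's own remark notes, this route leans on the assumption in \ref{Hilb:F1} that $A''$ is a factor. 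You instead identify the modular conjugations $J_B=J_N$ on the common dense domain $B\Omega$ and apply the Tomita commutation theorem to get $B'\subseteq N'$, hence $N\subseteq B''=B$. Your argument is a standard and clean alternative, and it has the mild advantage of not using factoriality of $A''$ anywhere --- only that $\Omega$ is a cyclic and separating trace vector. The trade-off is that the paper's expectation-based argument sits more naturally alongside the Jones-projection machinery used in the rest of Section 6 (e.g.\ in the proof of Lemma \ref{lem:ExtensionE}), whereas yours is self-contained modular theory. Either way, the delicate point you flag in Step 1 is handled the same way in both proofs, and your computation $(\eta_1\circ\Omega^1)(a)=a\Omega$ is exactly the identification the paper makes implicitly when it writes $L^2(A,\tr)=L^2(N,\tr)$.
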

\begin{proof}
By \cite[Lemma 1.5.11]{MR2391387}, there is a unique trace-preserving normal conditional expectation $\tilde{E}:N\to A''$ onto the von Neumann finite subfactor $A''.$ 
Moreover, $\tilde E$ is implemented by the Jones projection $f\in \End(L^2(N,\tr))$ whose range is $L^2(A,\tr).$ 
However, by assumption \ref{Hilb:F1}, we have  $L^2(A,\tr)=L^2(N,\tr),$ and so $f=\id_{L^2(N)}.$ 
Thus, by \cite[Lemma 3.1]{MR829381}, $A''=N.$ 
\end{proof}

\begin{remark}
    In practice, it will often be the case that we encounter $A''=N$ directly, and so the factoriality assumption of \ref{Hilb:F1} holds automatically  (c.f. Example \ref{ex:GJSHilbertification}). 
    In Lemma \ref{lem:A''isN}, assuming \ref{Hilb:F1} we relied on $A''$ being a factor to conclude $A''=N$ using Kosaki's result.
    However, one may as well start assuming that $A\subset N$ is strongly dense. 
    We well however not further explore those starting  technical assumptions here. 
\end{remark}

In the context of compatible actions we may speak of extending the action of $\cC\overset{F}{\curvearrowright}A$ to an action $\cC\overset{F''}{\curvearrowright}N$. 
Notice that by Condition \ref{Hilb:F1}, the unique trace $\tr$ on $N$ restricts to a faithful trace $\tr|A$ on $A.$ 
Similarly, one can view this as restricting an action $\cC\overset{F''}{\curvearrowright}N$ to an action on a C*-subalgebra $\cC\overset{F}{\curvearrowright}A$. 
Later, in Example \ref{ex:GJSHilbertification} we will describe a family of outer UTC-actions on unital monotracial C*-algebras which can be extended/restricted compatibly into outer UTC-actions on a $\rm{II}_1$-factor.

By \cite[Theorem 4.3]{2023arXiv230505072H}, connected C*-discrete inclusions $(A\subset B, E)$ correspond to connected C*-algebra objects in  $\cC_{A\subset B},$ the support UTC. 
Similarly by \cite[Theorem 5.35]{MR3948170}, connected $N$-discrete inclusions $(N\subset M, E)$ correspond to connected W*-algebra objects in $\cC_{N\subset M}.$ However, connected W*-algebra objects are given by a family of finite-dimensional W*-algebras graded by a UTC, and so connected W*-algebra objects are the same as connected C*-algebra objects \cite[Definition 2.4]{MR3948170}, and therefore it suffices to consider the latter. 

Let $\bbB$ be a connected C*-algebra object in $\Vec(\cC)$, and fix compatible outer actions (ie fully faithful) $F$ and $F''$ as above.
Using the realization functor from \cite{MR3948170} we obtain an $N$-discrete inclusion (i.e. a discrete subfactor)
$$\left(N\subset N\rtimes_{F''}\bbB =:M,\ E^M_N\right).$$  
We briefly outline this construction: Taking bounded vectors $F''(c)^\circ$ for each $c\in\cC$ yields a $*$-algebra object ${\bbF''}^\circ: \cC\to \Vec$. With it, on the one hand, we form the pre-Hilbert space 
$$|\bbB|_{\bbF''}^\circ:=\bigoplus_{Z\in\Irr(\cC)} {F''(Z)}^\circ \otimes_\bbC \bbB(Z),$$
and complete it with respect to the inner product induced by $(\tr\otimes\tau)\circ \mathsf{Proj}_{1_\cC}$---the composite of $\tr\otimes \tau$ with the projection onto the $1_\cC$-graded component---, obtaining the Hilbert space $L^2|\bbB|_{\bbF''}.$ 
Here, $\tau$ identifies $\bbB(1_\cC)\cong \bbC$ by connectedness, and so $\tr\otimes\tau$ is faithful. 
On the other hand, using the $*$-algebra object structures of $\bbF''$ and $\bbB$ we endow the vector space $|\bbB|_{\bbF''}^\circ$ with the structure of a $*$-algebra, denoted $|{\bbF''\times \bbB}|$, which acts on $L^2|\bbB|_{\bbF''}$  by bounded operators by left multiplication, denoted $\rhd.$ 
We define $M$ as the double commutant of $(|{\bbF''\times \bbB}|\rhd)\subset \End(L^2|\bbB|_{\bbF''}),$ and $E^M_N$ is the normal conditional expectation down to $N$, extending $\mathsf{Proj}_{\bbOne_\cC}.$
Details of this construction can be found on \cite[\S 5]{MR3948170}. 

In the following, we will use Sweedler's notation $\xi_{(Z)}\otimes f_{(Z)}$ to denote an arbitrary vector in the $Z$-graded component $F''(Z)^\circ \otimes \bbB(Z),$ where the reader should keep present this vector might not be elementary, but rather a sum thereof. 

Similarly, using the reduced crossed product construction from \cite{2023arXiv230505072H}, extended here in Chapter \ref{sec:generalrealization}, we obtain an $A$-discrete inclusion $$\left(A\subset  A\rtimes_{r,F}\bbB =:B,\ E^B_A\right).$$ 
Here, $E^B_A$ is a faithful conditional expectation onto $A$, which picks up only the $1_\cC$-graded component of a vector at the algebraic level. 
The composition of $E^B_A$ with the restricted trace on $A$ gives a faithful state on $B$ 
$$\varphi:=\tr\circ E^B_A.$$ 
Therefore, $B$ acts faithfully on the GNS space $L^2(B,\varphi)$ by bounded operators extending the left multiplication, with cyclic and separating vector $\Omega^\varphi$ given by the image of $1_A=1_B$ under the GNS map. 
Considering $B'' \subset \End(L^2(B,\varphi))$ we obtain a second  inclusion of von Neumann algebras $$A''\subset B'',$$ which is the smallest inclusion of von Neumann algebras extending $A\subset B$.

We now show that $A''\subset B''$ is a subfactor equipped with a normal expectation. 
\begin{lem}\label{lem:ExtensionE}
    Under the representation $B\subset\End(L^2(B,\varphi))$, we have that 
    \begin{enumerate}
        \item As von Neumann factors $$A''= N.$$
        \item There exists $ {E^B_A}'':B''\to A''$ unital faithful normal conditional expectation onto $A''.$
        \item The state $\varphi=\tr\circ E^M_N$ admits a normal extension to $B'',$ still denoted $\varphi.$ Furthermore, ${E^B_A}''$ is compatible with $\varphi.$
    \end{enumerate}
\end{lem}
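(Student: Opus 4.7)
The plan is to construct a $B$-equivariant isometric embedding
\[
\iota: L^2(B,\varphi) \hookrightarrow L^2|\bbB|_{\bbF''}
\]
and then derive all three assertions from the standard form of $N\subset M$ already assembled. Define $\iota$ on the dense subspace spanned by elementary tensors $\xi \otimes b \in F(X) \odot \bbB(X)$, for $X \in \Irr(\cC)$, by setting $\iota(\xi \otimes b) := \Omega^X(\xi) \otimes b \in F''(X)^\circ \odot \bbB(X)$. The compatibility axioms \ref{Hilb:Dense}--\ref{Hilb:F1} ensure $\iota$ is well-defined and isometric: both inner products extract the $1_\cC$-graded component of a categorical $*$-product (the one on $L^2(B,\varphi)$ via $\tr \circ E^B_A$, the one on $L^2|\bbB|_{\bbF''}$ via $(\tr \otimes \tau) \circ \mathsf{Proj}_{1_\cC}$), and the matching reduces to the fact that $\Omega^1$ is a trace-preserving identification $A \hookrightarrow N\Omega$ from \ref{Hilb:F1}, combined with the monoidality \ref{Hilb:Tensor} of the family $\{\Omega^X\}$. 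The embedding $\iota$ intertwines the left-multiplication $B$-actions on both sides, so through $\iota$ the von Neumann algebra $B''\subset \End(L^2(B,\varphi))$ is identified with a normal von Neumann subalgebra of $M$.

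For (1), under $\iota$ the $A$-action on $L^2(B,\varphi)$ is the restriction of the left $N$-action on $L^2|\bbB|_{\bbF''}$ to $A\subset N$. Since each $F''(X)$ is a spherical bifinite $N$-$N$-bimodule, $L^2|\bbB|_{\bbF''}$ is a multiple of the standard $N$-$N$-correspondence, so strong density of $A\subset N$ from Lemma \ref{lem:A''isN} transfers to this representation. This yields $A''=N$ in $\End(L^2|\bbB|_{\bbF''})$ and, by $\iota$-equivariance, in $\End(L^2(B,\varphi))$.

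For (2), with $B''$ realised as a normal von Neumann subalgebra of $M$, the normal faithful conditional expectation $E^M_N: M \to N$ restricts to a normal faithful map ${E^B_A}'' := E^M_N|_{B''}: B'' \to N = A''$. On the dense subalgebra $B \subset B''$ the expectation $E^M_N$ selects the $1_\cC$-graded component, which coincides with $E^B_A$; hence ${E^B_A}''|_B = E^B_A$, so ${E^B_A}''$ is a conditional expectation extending $E^B_A$. For (3), the cyclic separating vector $\Omega^\varphi$ for $B''$ yields a normal faithful extension $\varphi(x) := \langle x\Omega^\varphi, \Omega^\varphi \rangle$, and the identity $\varphi = \tr \circ E^B_A$ on $B$ extends by normality to $\varphi = \tr \circ {E^B_A}''$ on $B''$, giving the required compatibility.

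The principal obstacle is verifying isometricity of $\iota$: one must match, for arbitrary $\xi_{(X)}\otimes b_{(X)}$ and $\eta_{(Y)}\otimes c_{(Y)}$, the quantity $\tr(E^B_A((\xi_{(X)}\otimes b_{(X)})^*(\eta_{(Y)}\otimes c_{(Y)})))$ with the analogous expression on $L^2|\bbB|_{\bbF''}$. Both sides vanish unless the categorical product $\overline{X}\otimes Y$ contains $1_\cC$, and on the surviving terms the equality reduces, via \ref{Hilb:Tensor}, to a comparison of the $A$- and $N$-valued inner products of $\xi_{(X)}$ and $\eta_{(X)}$ transported through $\Omega^X$. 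This is ultimately a direct consequence of the axioms defining compatible actions, but it is the place where the full strength of \ref{Hilb:Dense}--\ref{Hilb:F1} is used; once this check is in place, everything else follows from standard subfactor-type arguments inside $N \subset M$.
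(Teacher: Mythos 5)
Your proof is essentially correct, but it reorganizes the argument around a different pivot than the paper does. The paper keeps Lemma \ref{lem:ExtensionE} intrinsic to $\End(L^2(B,\varphi))$: for (1) it compares $\langle a\rhd a_1\Omega, a_2\Omega\rangle$ with $\langle a\blacktriangleright a_1\Omega^\varphi, a_2\Omega^\varphi\rangle$ on $A\Omega^\varphi$ alone, invokes an extension lemma to get a normal $N$-action on $L^2(B,\varphi)$, and uses factoriality of $N$ for injectivity; for (2) it builds ${E^B_A}''$ from scratch by compressing with the Jones projection $e_N$ and identifying $\End(L^2(N,\tr)_N)\cong N$. You instead front-load the isometry $\iota$ --- which is precisely the map $U$ of the subsequent Lemma \ref{lem:UnitaryNN} --- and then obtain everything by transport of structure from the standard form of $N\subset M$, in particular defining ${E^B_A}''$ as the pullback of the restriction of $E^M_N$ to $\iota B''\iota^*$. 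This is legitimate and not circular (the isometry computation needs only \ref{Hilb:Dense}--\ref{Hilb:F1}, not the lemma itself; the paper only defers $U$ because its $N$-$N$-bimodularity uses $A''=N$), and it buys you faithfulness of ${E^B_A}''$ for free from faithfulness of $E^M_N$, which the paper's Jones-projection construction leaves implicit. What your route costs is that you must note explicitly that $\iota$ has \emph{dense} range (immediate from \ref{Hilb:Dense}, but without it $\iota B''\iota^*$ lives in a corner $pMp$ rather than in $M$, and restricting $E^M_N$ does not literally make sense). Two smaller imprecisions: $L^2|\bbB|_{\bbF''}$ is a multiple of $L^2(N)$ only as a \emph{left} $N$-module, not as an $N$-$N$-correspondence --- fortunately the left module structure is all you need to transfer strong density through a faithful normal representation; and $\Omega^\varphi$ being cyclic for $B''$ does not by itself make it separating for $B''$, so faithfulness of the normal extension of $\varphi$ should be derived, as you do in your last clause, from $\varphi=\tr\circ{E^B_A}''$ together with faithfulness of ${E^B_A}''$ rather than from the vector state formula.
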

\noindent It is therefore meaningful to determine whether or not this inclusion is of discrete type and isomorphic to $\left(N\subset M,\ E^M_N\right)$. 
\begin{proof}
    (1): Let us denote by $(A\rhd)\subset \End(L^2(N))$, so that $(A\rhd)''=N$ as in Lemma \ref{lem:A''isN}. Also, we denote by $(A\blacktriangleright)\subset \End(L^2(B,\varphi)).$ We shall show that $(A\blacktriangleright)''\cong N.$ 
    First, we observe that for all $a_1, a_2, a \in A$ the identity
    $$\langle a\rhd a_1\Omega,  a_2\Omega\rangle = \tr(a_1^*a^* a_2) = \tr\circ E^B_A(a_1^*a^* a_2) = \langle a\blacktriangleright a_1\Omega^\varphi,  a_2\Omega^\varphi\rangle$$ 
    holds. 
    By \cite[Lemma 3.40]{MR4139893} (alternatively use \cite[Proposition 4.15]{MR3948170}) applied to the $*$-algebra $A$, the Hilbert space $L^2(B,\varphi)$, and the Hilbert space $L^2(N,\tr)$ with dense subset $A\Omega$, we conclude that the left action $(A\blacktriangleright)$ extends to a normal left action  $([(A\rhd)'']\blacktriangleright)\subset \End(L^2(B,\varphi)).$ 
    
    Since $[A\blacktriangleright]'' \supset [A\blacktriangleright]\subset [N\blacktriangleright]$, it follows at once that $[A\blacktriangleright]''\subseteq [N\blacktriangleright],$ since the latter is a von Neumann algebra containing $[A\blacktriangleright].$  
    We shall now show that this is an equality. 
    For each arbitrary $n\in N$, there is a net $a_\lambda\in A$ such that $(a_\lambda\rhd) \to n\in \End(L^2(N,\tr))$, as $\lambda\to \infty$, in the weak operator topology. 
    By the above extension, we have that for any $a\in A,$ we have $(a\rhd)\blacktriangleright = a\blacktriangleright,$ and so we obtain that in the strong operator topology 
    $$
        (n\blacktriangleright) = \left(\left( \lim_\lambda (a_\lambda\rhd)\right)\blacktriangleright\right) = \lim_\lambda ((a_\lambda\rhd)\blacktriangleright)  = \lim_\lambda (a_\lambda\blacktriangleright).
    $$
    In the second equality above, we used that the representation $\blacktriangleright$ is continuous with respect to the weak operator topology, and so commutes with $\mathsf{WOT}$-limits.
    Therefore, 
    $$\blacktriangleright:[A\rhd]''\to [A\blacktriangleright]''$$ defines a surjective normal unital $*$-morphism of von Neumann algebras. 
    Since $[A\rhd]''=N$ is a factor, then $\blacktriangleright$ is injective, and so $[A\rhd]''\cong[A\blacktriangleright]''$ as von Neumann factors.
\smallskip
           
    (2): Recall that $E^B_A$ is implemented by the Jones projection $e_A\in \End^\dag(\cB_A)$, with $e_A[\cB]= A\Omega.$ 
    Here, $\cB$ is the right $B$-$A$ correspondence obtained by completing $B$ in the $A$-norm $\|\cdot\|_A$ from the conditional expectation $E^B_A$  and $\Omega$ denotes the image of $1_A$ under this representation (i.e. a cyclic vector for the left $B$-action on $\cB$).
    Notice that since the 2-norm obtained from $\varphi= \tr\circ E^B_A$ satisfies for all $b\in B,$ $\|b\Omega^\varphi\|_2\leq \|b\Omega\|_A.$ 
    Thus, the $L^2$-extension of $e_A$ matches with the  usual Hilbert space Jones projection from $L^2(B,\varphi)$ onto the closed subspace $L^2(A,\tr)= L^2(N,\tr)$ denoted $e_N$. 
    And so, for $b''\in B''$ we define ${E^B_A}''(b'')$ as the unique operator in $\End(L^2(N,\tr)_N)$ defined by the following relation on $L^2(N,\tr):$
    $$
        \left({E^B_A}''(b'')\right)\circ e_N = e_N\circ b''\circ e_N
        .
    $$
    Indeed, we have a $*$-isomorphism
    \begin{align}\label{eqn:ImplementationE''}
        \End (L^2(N,\tr)_N)&\cong N\\
        f&\mapsto (f(\Omega))^\vee\nonumber\\
        L_n&\mapsfrom n,\nonumber
    \end{align}
    where $L:\underbrace{(L^2(N,\tr))^\circ}_{=N\Omega}\to\End(L^2(N,\tr)_N)$ maps $n\to L_n$, given on $N\Omega$ by $L_n(x\Omega)=nx\Omega.$ 
    Moreover, since $f$ preserves bounded vectors in $L^2(N)$, it follows that $f(\Omega)\in N\Omega,$ and so $(f(\Omega))^\vee\in N$ is uniquely specified by $f(\Omega) = (f(\Omega))^\vee\Omega.$ 
    The maps $n\mapsto L_n$ and $f\mapsto (f(\Omega)^\vee$ are readily seen to be each other's inverses. 
    Therefore, Equation (\ref{eqn:ImplementationE''}) yields a well-defined function ${E^B_A}'':B''\to A''$, which is readily seen to be unital, surjective, completely positive and normal, since it is implemented by conjugating by the Jones projection. 
    Furthermore, by normality, it follows that ${E^B_A}''(b'')\Omega^\varphi = e_A(b''\Omega^\varphi)$, for all $b''\in B''.$
    Thus, ${E^B_A}''$ yields the desired conditional expectation.

\smallskip

    (3) Follows, extending $\varphi$ by the expression $\tr \circ{E^B_A}'',$ which on the nose is a normal and faithful state on $B''.$ 
    This state is compatible with the extended conditional expectation by construction. 
\end{proof}

Before we state the next lemma, we notice that by construction, the Hilbert space $L^2|\bbB|_{\bbF''}$ has the structure of an $N$-$N$ bimodule. 
We will use the fact that $A''\cong N$, and that both $L^2|\bbB|_{\bbF''}$ and $L^2(B,\varphi)$ contain isometric, and dense by \ref{Hilb:Dense}, copies of $B^\diamondsuit:= \oplus_Z\bbF(Z)\otimes \bbB(Z)$ to extend the $A$-$A$ bimodule---by left/right multiplication--- structure on the latter Hilbert space to that of an $N$-$N$ bimodule with normal actions.  
For arbitrary $a\in A$ and $b\in B,$ we have that 
$$
\|b\Omega^\varphi \lhd a\|_2^2=\tr(a^*\underbrace{E^B_A(b^*b)}_{=c^*c}a) = \tr(caa^*c^*)\leq \|a\|^2\tr(c^*c)=\|a\|^2\cdot\|b\Omega^\varphi\|_2^2.
$$
Which implies the right action $(\lhd A)$ on $L^2(B, \varphi)$ is by bounded operators, and similarly for  the left $A$-action. 
A direct application of \cite[Lemma 3.40]{MR4139893} affords us the desired normal extension, thus making $L^2(B, \varphi)$ into an $N$-$N$ bimodule. 

\begin{lem}\label{lem:UnitaryNN}
    The map     \begin{align}\label{eqn:BOmegatoB''Omega}
        \bigoplus_{Z\in\Irr(\cC)}\bbF(Z)\otimes \bbB(Z)&\to \bigoplus_{Z\in\Irr(\cC)}\bbF''(Z)^\circ\otimes\bbB(Z)\\
        \xi_{(X)}\otimes f_{(X)}&\mapsto \Omega^X(\xi_{(X)})\otimes f_{(X)}\nonumber
    \end{align}
    extends to an $N$-$N$ bilinear unitary $U$ \begin{align}\label{eqn:UnitaryIntertwiner}
        U:L^2(B,\varphi)&\to L^2|\bbB|_{\bbF''}\\
        \left(b_{(X)}\otimes f_{(X)}\right)\Omega^{\varphi}&\mapsto \Omega^X (b_{(X)})\otimes f_{(X)}\nonumber
    \end{align}
    for all $b\in B.$
\end{lem}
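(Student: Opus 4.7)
The plan is to construct $U$ in three steps: verify isometry on a dense subspace, extend by continuity, and check $N$-$N$ bilinearity.

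First, I would define $U_0 : B^\diamondsuit \to |\bbF'' \times \bbB|$ by the formula in (\ref{eqn:BOmegatoB''Omega}). Both algebras carry graded $*$-algebra structures whose multiplications are built by composing $F^2_{X,Y}$ (respectively $(F'')^2_{X,Y}$) with $\bbB^2_{X,Y}$, and whose involutions come from $j^\bbB$ together with the conjugate structures. Condition \ref{Hilb:Tensor} says precisely that $\Omega^X$ intertwines these tensorators, so $U_0$ is a graded $*$-algebra homomorphism. Furthermore, the state $\varphi = \tr \circ E^B_A$ on $B$ composes the trace with the projection onto the $1_\cC$-graded summand, while $(\tr \otimes \tau) \circ \mathsf{Proj}_{1_\cC}$ does the analogous thing on the target. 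By \ref{Hilb:F1}, $\Omega^1$ sends $1_A$ to the cyclic separating vector $\Omega \in L^2(N,\tr)$, identifying $A\subset N$ isometrically. Hence both states pull back to the same functional on $B^\diamondsuit$, and $U_0$ is isometric with respect to the corresponding GNS inner products.

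Second, since $B^\diamondsuit \Omega^\varphi$ is dense in $L^2(B,\varphi)$, $U_0$ extends uniquely by continuity to a bounded isometry $U : L^2(B,\varphi) \to L^2|\bbB|_{\bbF''}$. Surjectivity is a consequence of condition \ref{Hilb:Dense}: for every $X \in \cC$, $\Omega^X[F(X)]$ is dense in $F''(X)^\circ$ in the $\|\cdot\|_2$-norm, so the image of $U_0$ is dense in $\bigoplus_Z F''(Z)^\circ \otimes \bbB(Z)$, and hence in $L^2|\bbB|_{\bbF''}$.

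For $N$-$N$ bilinearity, note that $U_0$ is $A$-$A$ bilinear at the algebraic level because the action of $A = \bbB(\bbOne)$ on either side factors through the $F(\bbOne)$ (respectively $F''(\bbOne)$) component, and $\Omega^1$ is $A$-$A$ bimodular by \ref{Hilb:F1}. By Lemma \ref{lem:ExtensionE}, the left $A$-action extends normally to an action of $A'' = N$ on $L^2(B,\varphi)$, while $L^2|\bbB|_{\bbF''}$ is an $N$-$N$ bimodule with normal actions by construction. Since $U$ intertwines the $A$-$A$ actions on a dense subspace, a standard $\sigma$-weak continuity argument shows $U$ intertwines the extended $N$-$N$ actions. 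The main obstacle is Step 1: carefully unpacking the multiplication and involution in both realizations in terms of the tensorators and dualities, and then invoking \ref{Hilb:Tensor} together with compatibility of $\Omega^X$ with the conjugate structures to match them. Once this is in hand, the coincidence of $\varphi$ and $(\tr\otimes\tau)\circ \mathsf{Proj}_{1_\cC}$ after applying $U_0$ is immediate from \ref{Hilb:F1}.
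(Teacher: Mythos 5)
Your proposal is correct and follows essentially the same route as the paper: establish isometry on the algebraic direct sum, extend by continuity using the density from \ref{Hilb:Dense} to get a unitary, and upgrade $A$-$A$ bimodularity to $N$-$N$ bimodularity by approximating $n\in N$ strongly by a net in $A$ and using normality of the extended actions. The only cosmetic difference is that you package the isometry step as ``$U_0$ is a graded $*$-homomorphism intertwining the two states'' (via \ref{Hilb:Tensor} and \ref{Hilb:F1}), whereas the paper computes the two inner products directly and observes that $E^M_N$ and $E^B_A$ agree on the algebraic sum --- the same content in different clothing.
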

\begin{proof}
    By \ref{Hilb:Dense}, the map from Expression (\ref{eqn:BOmegatoB''Omega}) is well-defined and its range is dense in $L^2|\bbB|_{\bbF''}$. 
    Furthermore, it is isometric with respect to the $\|\cdot\|_2$-norms on both sides. 
    Indeed, for $\sum_X b_{(X)}\otimes f_{(X)}$ in the algebraic sum, we have that 
    \begin{align*}
    \left\|\sum_X(b_{(X)}\otimes f_{(X)})\Omega^\varphi \right\|_{L^2(B,\varphi)} &= \sum_{X,Z}(\tr\otimes\tau)\left(E^B_A \left( j^{\bbF}_X(b_{(X)})\otimes j_{X}^{\bbB}(f_{(X)})\cdot (b_{(Z)}\otimes f_{(Z)}) \right) \right)\\ &= \sum_{X,Z}(\tr\otimes \tau)\left(E^M_N \left( j^{\bbF}_X(b_{(X)})\otimes j_{X}^{\bbB}(f_{(X)})\cdot (b_{(Z)}\otimes f_{(Z)}) \right) \right)\\ &= \left\| \sum_X b_{(X)}\Omega^X\otimes f_{(X)} \right\|_{L^2|\bbB|_{\bbF''}},  
    \end{align*}
    since $E^M_N$ and $E^B_A$ coincide on the algebraic sum. 
    (Recall that $\tau$ is the map identifying $\bbB(1_\cC)\cong \bbC$, since $\bbB$ is assumed connected.) 
    Thus, the map (\ref{eqn:BOmegatoB''Omega}) extends to an isometry with dense range $U$ as in Expression (\ref{eqn:UnitaryIntertwiner}). 
    Thus $U$ is a unitary. 

    We shall  now show that $U$ is right $N$-linear. 
    Let $n\in N$ and $(a_\lambda)_\lambda\subset A$ converging strongly to $n$ on $L^2(B,\varphi),$ as $\lambda\to \infty.$ 
    We then have that for any arbitrary $\sum_{Z\in \Irr(\cC)}\xi_{(Z)}\otimes \eta_{(Z)} \in L^2(B, \varphi)$: 
    \begin{align*}
        \left( \sum_{Z\in \Irr(\cC)}\xi_{(Z)}\otimes \eta_{(Z)}  \right)\blacktriangleleft n &= \lim_\lambda \left( \left( \sum_{Z\in \Irr(\cC)}\xi_{(Z)}\otimes \eta_{(Z)}\right)\blacktriangleleft a_\lambda  \right)\\ 
        &= \lim_\lambda \left(\sum_{Z\in \Irr(\cC)} \left(\xi_{(Z)}\blacktriangleleft a_\lambda\right)\otimes \eta_{(Z)} \right)\\
        &\overset{U}{\mapsto}\ \lim_\lambda U\left(\sum_{Z\in \Irr(\cC)} \left(\xi_{(Z)}\lhd a_\lambda\right)\otimes \eta_{(Z)} \right)\\
        &= \lim_\lambda \left(\left(U\left(\sum_{Z\in \Irr(\cC)} \xi_{(Z)}\otimes \eta_{(Z)}\right)\right) \lhd a_\lambda \right)\\
        &=  \left(U\left(\sum_{Z\in \Irr(\cC)} \left(\xi_{(Z)}\right)\otimes \eta_{(Z)} \right)\right) \lhd n.
    \end{align*}
    Here, we have used that $U$ is $\|\cdot\|_2$-continuous and commutes with the right $A$-action on the nose, which is also continuous. 
    The last equality follows by the normality of the right $N$-action on $L^2|\bbB|_{\bbF''}.$
    Thus, $U$ is right $N$-linear

    The left $N$-linearity is similar. 
\end{proof}

\begin{lem}
    The unitary $U$ is left $B^\diamondsuit$-linear.
\end{lem}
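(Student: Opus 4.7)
The plan is to verify the intertwining identity $U(b^\diamondsuit \cdot v) = b^\diamondsuit \blacktriangleright U(v)$ on the dense algebraic subspace $B^\diamondsuit \Omega^\varphi \subseteq L^2(B,\varphi)$ and extend by continuity. Since $B^\diamondsuit \subseteq B$ acts boundedly on $L^2(B,\varphi)$ by left multiplication, and maps into $M$ through the assignment $\xi \otimes f \mapsto \Omega^X(\xi) \otimes f$ for $\xi \otimes f \in F(X) \otimes \bbB(X)$---which acts boundedly on $L^2|\bbB|_{\bbF''}$ by left multiplication in the $*$-algebra $|\bbF'' \times \bbB|$---both sides of the identity are bounded operators. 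It therefore suffices to verify it on elementary tensors $\xi \otimes f \in F(X) \otimes \bbB(X)$ applied to $(\eta \otimes g)\Omega^\varphi$ with $\eta \otimes g \in F(Y) \otimes \bbB(Y)$, for $X, Y \in \Irr(\cC)$.

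Using the $*$-algebra structure defining $B$ as a crossed product via the lax structures $F^2$ and $\bbB^2$, one has
$$
(\xi \otimes f) \cdot (\eta \otimes g) \;=\; F^2_{X,Y}(\xi \boxtimes \eta) \otimes \bbB^2_{X,Y}(f \odot g) \;\in\; F(X \boxtimes Y) \otimes \bbB(X \boxtimes Y),
$$
so the definition (\ref{eqn:UnitaryIntertwiner}) of $U$ yields
$$
U\bigl((\xi \otimes f) \cdot (\eta \otimes g)\Omega^\varphi\bigr) \;=\; \Omega^{X \boxtimes Y}\bigl(F^2_{X,Y}(\xi \boxtimes \eta)\bigr) \otimes \bbB^2_{X,Y}(f \odot g).
$$
On the other hand, the action of $\xi \otimes f$ on $L^2|\bbB|_{\bbF''}$, routed through $\Omega^X(\xi) \otimes f \in |\bbF'' \times \bbB|$ followed by left multiplication in that $*$-algebra, gives
$$
(\xi \otimes f) \blacktriangleright \bigl(\Omega^Y(\eta) \otimes g\bigr) \;=\; (F''_{X,Y})^2\bigl(\Omega^X(\xi) \boxtimes \Omega^Y(\eta)\bigr) \otimes \bbB^2_{X,Y}(f \odot g).
$$

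Since the $\bbB$-factors coincide, the required identity reduces to
$$
\Omega^{X \boxtimes Y} \circ F^2_{X,Y} \;=\; (F''_{X,Y})^2 \circ \bigl(\Omega^X \boxtimes \Omega^Y\bigr),
$$
which is precisely the content of condition \ref{Hilb:Tensor} once the monoidal natural isomorphism $\eta$ identifying $\cH \circ F$ with $F''$ is suppressed, as per the convention fixed earlier in the section. The main delicacy lies in the bookkeeping of isotypic decompositions: the element $F^2_{X,Y}(\xi \boxtimes \eta)$ sits inside the possibly non-simple bimodule $F(X \boxtimes Y)$ and must be resolved into $\Irr(\cC)$-graded components before being viewed inside $L^2(B,\varphi)$ or $L^2|\bbB|_{\bbF''}$; the naturality of the family $\{\Omega^Z\}_{Z \in \cC}$ as $A$-$A$-bimodular maps guarantees that this resolution commutes with $\Omega$, so the verification collapses to the single categorical identity \ref{Hilb:Tensor}.
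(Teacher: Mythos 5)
Your proof is correct and is exactly the argument the paper intends: its own proof consists of the single sentence that the claim ``follows directly from computation and a straightforward application of (H3),'' and your computation on elementary tensors, reducing the intertwining identity to $\Omega^{X\boxtimes Y}\circ F^2_{X,Y}=(F''_{X,Y})^2\circ(\Omega^X\boxtimes\Omega^Y)$, is precisely that computation spelled out. The only caveat is that the compatibility of the $\Omega^Z$ with the resolution into $\Irr(\cC)$-graded components (your appeal to ``naturality'' of the family $\{\Omega^Z\}$) is not literally listed among the hypotheses (H1)--(H4), but the paper's own proof tacitly assumes the same thing, so this is a faithful rendering rather than a gap you introduced.
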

\begin{proof}
    This follows directly from computation and a straightforward application of \ref{Hilb:Tensor}. 
\end{proof}

Comparing discreteness in the von Neumann factors sense to discreteness in the C*-algebra sense, thus formally amounts to the following theorem:
\begin{thm}[{Theorem~\ref{thmalpha:vNvsC*Disc}}]\label{thm:vNvsC*Disc}
    Let $F:\cC\to \fgpBim(A)$ and $F'':\cC\to \spbfBim(N)$ be compatible actions, and $\bbB$ be a connected C*/W*-algebra object in $\Vec(\cC).$ 
    Then the map 
    \begin{align*}
        \Ad(U):\End_{-N}(L^2|\bbB|_{\bbF''})&\to \End_{-N}(L^2(B,\varphi))\\
        T&\mapsto U^*\circ T\circ U
    \end{align*} 
    implements a normal, unital, expectation preserving $*$-isomorphism 
    $$\left( (N,\tr)\overset{E^M_N}{\subset} M\right)\cong \left( (N,\tr)\overset{{E^B_A}''}{\subset} B''\right).$$
\end{thm}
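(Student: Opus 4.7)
The plan is to leverage the unitary $U:L^2(B,\varphi)\to L^2|\bbB|_{\bbF''}$ from Lemma~\ref{lem:UnitaryNN} as the implementing intertwiner, and then show that conjugation by $U$ matches the three pieces of data: the ambient von Neumann algebras, the distinguished copies of $N$, and the conditional expectations.

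First, since $U$ is $N$--$N$ bilinear, $\Ad(U)$ is a normal $*$-isomorphism
\[
\End_{-N}(L^2|\bbB|_{\bbF''}) \xrightarrow{\ \sim\ } \End_{-N}(L^2(B,\varphi)),
\]
and it carries the copy of $N$ acting from the left on $L^2|\bbB|_{\bbF''}$ to the corresponding copy on $L^2(B,\varphi)$, which by Lemma~\ref{lem:ExtensionE}(1) is $A''=N$. So the $N$--$N$-bimodular structures are matched. Next, by the preceding Lemma the unitary $U$ is left $B^\diamondsuit$-linear, i.e.\ for every element $x\in B^\diamondsuit=\bigoplus_Z F(Z)\otimes\bbB(Z)$ one has $U\circ(x\blacktriangleright)=(\Omega(x)\rhd)\circ U$, where $\Omega(x)$ denotes the image in $|\bbF''\times\bbB|$ under~\ref{Hilb:Tensor}. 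Consequently, $\Ad(U^*)$ maps the $*$-algebra $B^\diamondsuit\blacktriangleright \subset \End(L^2(B,\varphi))$ onto the dense subalgebra $|\bbF''\times\bbB|\rhd\subset M$ (density uses \ref{Hilb:Dense}, which guarantees that the image of $\Omega$ inside the algebraic sum is dense in each $Z$-graded component of $L^2|\bbB|_{\bbF''}$).

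Taking double commutants and using that $\Ad(U)$ is a normal $*$-isomorphism that preserves commutants, we obtain
\[
\Ad(U)\bigl( M \bigr) \;=\; \Ad(U)\bigl( (|\bbF''\times\bbB|\rhd)'' \bigr) \;=\; (B^\diamondsuit\blacktriangleright)'' \;=\; B''\,,
\]
the last equality holding because $B^\diamondsuit$ is norm-dense in $B$ and the GNS representation of $B$ on $L^2(B,\varphi)$ is faithful. Restricting, this gives a normal unital $*$-isomorphism $\Phi:=\Ad(U)|_M\colon M\to B''$ which is the identity on the common copy of $N$ by the first paragraph.

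Finally, to see that $\Phi$ intertwines the conditional expectations, recall from Lemma~\ref{lem:ExtensionE}(2) that ${E^B_A}''$ is implemented by the Jones projection $e_N\in \End(L^2(B,\varphi))$ onto $L^2(N,\tr)$, while $E^M_N$ is implemented by the projection $p_{\bbOne}\in\End(L^2|\bbB|_{\bbF''})$ onto the $1_\cC$-graded component $F''(\bbOne)=L^2(N,\tr)$. From the defining formula $U((b_{(X)}\otimes f_{(X)})\Omega^\varphi)=\Omega^X(b_{(X)})\otimes f_{(X)}$, together with \ref{Hilb:F1}, $U$ sends $L^2(N,\tr)\subset L^2(B,\varphi)$ isometrically onto the $1_\cC$-graded summand of $L^2|\bbB|_{\bbF''}$; hence $U e_N U^*=p_{\bbOne}$. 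Conjugating the expectation formula ${E^B_A}''(b'')e_N=e_N b'' e_N$ by $U$ then gives $E^M_N\circ\Phi=\Phi\circ {E^B_A}''$ as required (both sides, being normal maps agreeing on the strongly dense $*$-algebra, extend uniquely).

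The step I expect to require the most care is the passage from ``$\Ad(U)$ intertwines the two $*$-algebras of local operators'' to ``$\Ad(U)$ matches the two von Neumann algebras''. Concretely, one needs both that $|\bbF''\times\bbB|\rhd$ is strongly dense in $M$ (this is built into the definition of $M$ from \cite{MR3948170}) and that $B^\diamondsuit\blacktriangleright$ is strongly dense in $B''\subset\End(L^2(B,\varphi))$; the latter uses that $B$ is norm-closed hull of $B^\diamondsuit$ together with faithfulness of the GNS vector $\Omega^\varphi$. Once these two density statements are in place, the $*$-algebra intertwining promoted by $\Ad(U^*)$ upgrades to a von Neumann algebra isomorphism via the commutant theorem.
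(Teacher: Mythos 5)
Your overall strategy (conjugate by the unitary $U$ of Lemma~\ref{lem:UnitaryNN}, match the copies of $N$, match the generating $*$-algebras, then pass to double commutants and check the expectations) is the same as the paper's, and the endpoints of your argument are fine: $(|{\bbF''\times\bbB}|\rhd)''=M$ is the definition of $M$, and $(B^\diamondsuit\blacktriangleright)''=B''$ follows from norm-density of $B^\diamondsuit$ in $B$. The expectation-preservation step via $Ue_NU^*=p_{\bbOne}$ is also a legitimate (slightly different) route; the paper instead checks agreement of $E^M_N$ and ${E^B_A}''$ on the graded algebraic core and extends by normality.

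However, there is a genuine gap at the central equality
\[
\Ad(U)\bigl((|{\bbF''\times\bbB}|\rhd)''\bigr)=(B^\diamondsuit\blacktriangleright)''.
\]
You justify this by saying that $\Ad(U^*)$ maps $B^\diamondsuit\blacktriangleright$ \emph{onto} $|{\bbF''\times\bbB}|\rhd$. That is false in general: $B^\diamondsuit=\bigoplus_Z F(Z)\otimes\bbB(Z)$ sits inside $|{\bbF''\times\bbB}|=\bigoplus_Z F''(Z)^\circ\otimes\bbB(Z)$ via the maps $\Omega^Z$, and condition \ref{Hilb:Dense} only gives that $\Omega^Z[F(Z)]$ is $\|\cdot\|_2$-dense in $F''(Z)$ --- it does not give $\Omega^Z[F(Z)]=F''(Z)^\circ$. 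So what you actually obtain is the inclusion $\Ad(U^*)[B'']=(B^\diamondsuit\rhd)''\subseteq M$, and the missing content of the theorem is precisely the reverse inclusion: that the smaller set of "algebraic" vectors already generates all of $M$. This cannot be deduced from $\|\cdot\|_2$-density alone, because for bounded vectors $\xi_n\to\xi$ in $\|\cdot\|_2$ the left-multiplication operators $L_{\xi_n}$ need not converge to $L_\xi$ in any operator topology without uniform norm control; $\|\cdot\|_2$-density of a subspace of bounded vectors does not in general imply equality of the generated von Neumann algebras.

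The paper closes this gap differently: it sets $P:=\overline{(B^\diamondsuit\rhd)}^{\mathsf{WOT}}$, observes $N\subseteq P\subseteq M$, proves $P$ is a factor using connectedness of $\bbB$ (any $x\in P'\cap P$ lies in $N'\cap M$ and hence gives an element of $\bbB(\bbOne_\cC)\cong\bbC$), and then invokes the Galois correspondence for discrete subfactors \cite[Corollary 7.14]{MR3948170} to produce an intermediate algebra object $\bbOne\subseteq\bbP\subseteq\bbB$ with fibers $\bbP(Z)=\Hom_{N\-N}(F''(Z)^\circ\to M\Omega)$. At the level of intertwiner spaces, $\|\cdot\|_2$-density of $F(Z)$ in $F''(Z)$ \emph{is} exactly what is needed to conclude $\bbP=\bbB$, hence $P=M$. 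You would need to import this (or an equivalent density-upgrading device) to complete your argument; as written, the step you yourself flag as requiring the most care is the one that is not actually established.
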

\begin{proof}
For any $b\in B^\diamondsuit$, any $c\in \cC$ and all $\xi_{(c)}\otimes f_{(c)}$ we have that 
$$
U^*\circ(b\rhd)\circ U \left( \left(\xi_{(c)}\otimes f_{(c)}\right)\Omega^\varphi\right) = b\blacktriangleright \left((\xi_{(c)}\otimes f_{(c)})\Omega^\varphi\right), 
$$
since $U$ commutes with the left $B^\diamondsuit$-actions. 
Therefore for every $b\in B^\diamondsuit$ we have 
$$
\Ad(U)(b\rhd) = b\blacktriangleright,
$$
and therefore 
$$
\Ad(U)[B^\diamondsuit\rhd]= [B^\diamondsuit\blacktriangleright].
$$
In particular, $U$ is $A$-$A$ bimodular, and so $\Ad(U)[A\rhd]=[A\blacktriangleright].$ 
Since $\Ad(U)$ is WOT-continuous, it then follows that 
$$N\subseteq P:=\overline{\Ad(U^*)[B^\diamondsuit\blacktriangleright]}^{\mathsf{WOT}} = \Ad(U^*)[\overline{B^\diamondsuit\blacktriangleright}^{\mathsf{WOT}}]\subseteq \overline{[B^\circ\rhd]}^{\mathsf{WOT}}=M.$$

We now prove that the von Neumann algebra $P$  is a factor. If $x\in P'\cap P,$ in particular $x\in N'\cap M,$ and so multiplication by $x$ determines an element $L_x\in \Hom_{N-N}(N\to M\Omega) = \bbB(1_\cC)\cong \bbC.$ 
Thus, $x$ is a scalar, and so $P$ is a factor. 

We now show that $P=M$. Applying the Galois correspondence \cite[Corollary 7.14]{MR3948170} to the intermediate subfactor inclusion $N \subseteq P \subseteq M$, there exists a connected W*-algebra object $\bbP$ such that $\bbOne\subseteq \bbP \subseteq \bbB$, whose fibers are explicitly 
$$
\bbP(Z) = \Hom_{N-N}(\bbF''(Z)^\circ \to M\Omega) \qquad \forall Z\in \cC.
$$
However, by construction 
$$ \bbF(Z) \subset \bbF''(Z)^\circ\ \text{ densely }\qquad \forall Z\in\cC.$$
Thus, $\bbP= \bbB$ and therefore $P=M,$ as claimed. 

It follows that the *-isomorphism $\Ad(U)$ is a normal map preserving the inclusions: 
$$
\Ad(U)[N\subset M] = (A''\subset B'').
$$ 

Finally, $\Ad(U)$ is also expectation preserving since at the algebraic level $B^\diamondsuit$ we have that 
$$
 E^M_N(b\rhd) = E^B_A(\Ad(U)(b\rhd)) = E^B_A(b\blacktriangleright)= {E^B_A}''(b\blacktriangleright),
$$
given that $U$ preserves the grading. 
Since all maps above are normal, we conclude that $\Ad(U)$ is expectation preserving from $M$ onto $B''.$ 
\end{proof}

Theorem \ref{thm:vNvsC*Disc} in combination with Example \ref{ex:GJSHilbertification} yields the following corollary: 
\begin{cor}
For any unitary tensor category $\cC$ there is a one-to-one correspondence between connected C*-discrete extensions and connected von Neumann discrete extensions under the GJS actions. 
\end{cor}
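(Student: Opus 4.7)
The plan is to assemble three ingredients already available in the paper: (i) the classification of connected C*-discrete extensions by connected C*-algebra objects (Theorem~\ref{thm: thereconstructiontheorem} restricted to the connected case, i.e.\ \cite[Theorem 4.3]{2023arXiv230505072H}); (ii) the Jones--Penneys classification \cite[Theorem 5.35]{MR3948170} of connected von Neumann discrete extensions by connected W*-algebra objects, together with the remark that connected W*-algebra objects and connected C*-algebra objects coincide as a category; and (iii) the explicit expectation-preserving $*$-isomorphism furnished by Theorem~\ref{thm:vNvsC*Disc} relating the two realizations through double commutants.

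First, I would verify that the GJS actions $F:\cC\to\fgpBimtr(A)$ and $F'':\cC\to\spbfBim(N)$ fulfill the compatibility axioms \ref{Hilb:Dense}--\ref{Hilb:F1}. This is where Example~\ref{ex:GJSHilbertification} does the work: the Hilbertification functor $\cH=(-\boxtimes_N L^2(N)):\fgpBimtr(A)\to \spbfBim(N)$ is a fully faithful unitary tensor functor with $\cH\circ F\cong F''$; the family $\Omega^X:F(X)\hookrightarrow F''(X)^\circ$ consists of the canonical inclusions into $2$-norm completions, which are contractive, $A$-$A$ bimodular, norm-dense on each fiber, and monoidal by construction; the factoriality of $A''=N$ for the GJS C*-algebra inside the GJS factor is precisely \cite[Lemma 3.40]{MR4139893} combined with the usual cyclic-separating property of $\Omega\in L^2(N,\tr)$.

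Second, given a connected C*-algebra object $\bbB\in\Vec(\cC)$, I obtain simultaneously a connected C*-discrete extension $(A\subset A\rtimes_{r,F}\bbB,\,E^B_A)$ via the reduced coend realization of Section~\ref{sec:generalrealization}, and a connected vN discrete extension $(N\subset N\rtimes_{F''}\bbB,\,E^M_N)$ via the Jones--Penneys realization. Theorem~\ref{thm:vNvsC*Disc} applied to this data gives an expectation-preserving $*$-isomorphism $(N\subset N\rtimes_{F''}\bbB) \cong (N\subset (A\rtimes_{r,F}\bbB)'')$, i.e.\ the double commutant of the C*-extension recovers the vN extension. Conversely, starting from a connected vN discrete extension $N\overset{E}{\subset} M$, its standard invariant \cite[Theorem 5.35]{MR3948170} produces a connected W*-algebra object $\bbB$; by connectedness this is a bona fide connected C*-algebra object, and its C*-realization $A\rtimes_{r,F}\bbB$ is a connected C*-discrete extension whose double commutant is, by Theorem~\ref{thm:vNvsC*Disc} again, isomorphic to $M$ over $N$ in an expectation-preserving way.

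Finally, I would check that these two assignments are mutually inverse. In one direction, the underlying C*-algebra object of $A\subset A\rtimes_{r,F}\bbB$ is $\bbB$ by construction (see the fibers $\bbB(K)=\rCorr_{A\-A}(K,\cE)^\diamondsuit$ in Section~\ref{subsec:discretepqrin}), so we recover $\bbB$ and hence $N\subset N\rtimes_{F''}\bbB$. In the other direction, the standard invariant extracted from $N\subset (A\rtimes_{r,F}\bbB)''\cong N\rtimes_{F''}\bbB$ is again $\bbB$ by \cite[Theorem 5.35]{MR3948170}. The main point I expect to need care with is the identification of the underlying abstract C*-algebra object associated to the C*-discrete and vN discrete realizations of the \emph{same} $\bbB$; this is precisely what the unitary $U$ of Lemma~\ref{lem:UnitaryNN} and its $B^\diamondsuit$-linearity ensure, so no new argument is required. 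The corollary then follows by chaining the two classification bijections through the middle set of connected C*-algebra objects in $\Vec(\cC)$.
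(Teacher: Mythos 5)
Your proposal is correct and follows essentially the same route as the paper, which derives the corollary in one line from Theorem~\ref{thm:vNvsC*Disc} together with Example~\ref{ex:GJSHilbertification}; you simply make explicit the two classification results (\cite[Theorem 4.3]{2023arXiv230505072H} and \cite[Theorem 5.35]{MR3948170}) and the identification of connected W*- with C*-algebra objects that the paper invokes in the surrounding discussion. The verification of the compatibility axioms for the GJS actions and the check that the two assignments are mutually inverse through the common invariant $\bbB$ are exactly the intended content.
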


\bigskip

Within the context of Theorem \ref{thm:vNvsC*Disc}, we shall now compare the lattice of intermediate subalgebras to a given irreducible vN-discrete extremal inclusion and its C*-counterpart: 
\begin{prop}\label{Prop:Galois}
    Let $N\subset M$ and $A\subset B$ satisfying conclusions of Theorem \ref{thm:vNvsC*Disc}. 
    There is a lattice injection: 
\begin{align*}
        \Psi:\left\{ P\in\vNA\ \middle|\ N\subseteq P\subseteq M  \right\}
        &\hookrightarrow \left\{ D\in\rCorr\ \middle|\ A\subseteq D\subseteq B  \right\}\\
        P&\mapsto A\rtimes_{F,r}\bbP. 
\end{align*}
    And a surjection
\begin{align*}
        \Phi: \left\{ D\in\rCorr\ \middle|\ A\subseteq D\subseteq B  \right\} &\twoheadrightarrow\left\{ P\in\vNA\ \middle|\ N\subseteq P\subseteq M  \right\}\\
        D&\mapsto D''. 
    \end{align*}
    Here, $\bbP$ is the connected W*/C* algebra object underlying $N\subseteq P$, and the commutants are taken within $\End(L^2(B,\varphi)).$
    Furthermore, these maps satisfy $$\Phi\circ\Psi = \id.$$
\end{prop}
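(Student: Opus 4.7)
The plan is to first verify both $\Psi$ and $\Phi$ are well-defined and order-preserving, and then to establish the key identity $\Phi \circ \Psi = \id$; injectivity of $\Psi$ and surjectivity of $\Phi$ will follow formally. For $\Psi$, I would invoke the Galois correspondence \cite[Corollary 7.14]{MR3948170}: an intermediate $N \subseteq P \subseteq M$ yields a connected W*-subobject $\bbOne \subseteq \bbP \subseteq \bbB$ with fibers $\bbP(Z) = \Hom_{N-N}(\bbF''(Z)^\circ \to P\Omega)$, which are finite-dimensional by connectedness, so $\bbP$ is equivalently a C*-algebra subobject of $\bbB$. Functoriality of the reduced crossed product (Theorem \ref{thm: thereconstructiontheorem}) then lifts the unital $*$-monomorphism $\bbP \hookrightarrow \bbB$ to a unital, faithful, expectation-preserving embedding $A \rtimes_{F,r} \bbP \hookrightarrow A \rtimes_{F,r} \bbB = B$ whose image contains $A$; this defines $\Psi(P)$, and order-preservation is immediate from the functoriality. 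For $\Phi$, monotonicity of the double commutant combined with $A'' = N$ and $B'' = M$ from Theorem \ref{thm:vNvsC*Disc} shows $N \subseteq D'' \subseteq M$ for any intermediate $A \subseteq D \subseteq B$, so $\Phi$ is well-defined and clearly order-preserving.

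The central step will be to prove $\Phi \circ \Psi = \id$, for which I would replay the proof of Theorem \ref{thm:vNvsC*Disc} with $\bbP$ in place of $\bbB$. Concretely, the $N$-$N$-bilinear unitary $U$ from Lemma \ref{lem:UnitaryNN} restricts to a unitary between the closed subspaces $L^2(A \rtimes_{F,r} \bbP, \varphi) \subseteq L^2(B,\varphi)$ and $L^2|\bbP|_{\bbF''} \subseteq L^2|\bbB|_{\bbF''}$, and intertwines the algebraic left actions $|\bbF \times \bbP| \blacktriangleright$ and $|\bbF''^\circ \times \bbP| \rhd$. Passing to WOT-closures and using the WOT-continuity of $\Ad(U)$, one obtains
\[ \Ad(U)\bigl[\overline{(|\bbF''^\circ \times \bbP| \rhd)}^{\mathsf{WOT}}\bigr] = \overline{(|\bbF \times \bbP| \blacktriangleright)}^{\mathsf{WOT}}, \]
whose left-hand side is $\Ad(U)[N \rtimes_{F''} \bbP] = \Ad(U)[P]$ and whose right-hand side is $(A \rtimes_{F,r} \bbP)'' = \Psi(P)'' = \Phi(\Psi(P))$. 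Under the canonical identification $\Ad(U) : M \xrightarrow{\sim} B''$ supplied by Theorem \ref{thm:vNvsC*Disc}, this yields $\Phi(\Psi(P)) = P$.

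With $\Phi \circ \Psi = \id$ in hand, injectivity of $\Psi$ and surjectivity of $\Phi$ are immediate formalities. The main obstacle will be verifying that $U$ genuinely restricts to the claimed sub-unitary on the $\bbP$-graded component, as opposed to merely mapping the full space $L^2(B,\varphi)$ to $L^2|\bbB|_{\bbF''}$. This will rest on identifying $L^2(A \rtimes_{F,r} \bbP, \varphi)$ with the closed sub-correspondence $\bigoplus_{Z \in \Irr(\cC)}^{\ell^2} F(Z) \otimes \bbP(Z)$ of $L^2(B,\varphi)$, and analogously for $L^2|\bbP|_{\bbF''}$; once this is set up, the defining formula for $U$ from Lemma \ref{lem:UnitaryNN} is manifestly diagonal with respect to the $\Irr(\cC)$-decomposition and therefore preserves the subgrading cut out by $\bbP \hookrightarrow \bbB$.
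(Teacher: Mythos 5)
Your setup (well-definedness of $\Psi$ via the Galois correspondence and functoriality of the reduced crossed product, well-definedness of $\Phi$ via $A''=N$ and $B''=M$, and the reduction of injectivity/surjectivity to $\Phi\circ\Psi=\id$) matches the paper's. The gap is in the central step. Lemma \ref{lem:UnitaryNN} and the proof of Theorem \ref{thm:vNvsC*Disc} only give $\Ad(U)(b\rhd)=b\blacktriangleright$ for $b$ in the algebraic sum with \emph{$F(Z)$-coefficients}, i.e.\ for $b\in\bigoplus_Z F(Z)\otimes\bbP(Z)$. Your displayed identity silently replaces the left-hand algebra by $|\bbF''^{\circ}\times\bbP|\rhd$, whose coefficients range over the larger spaces $F''(Z)^\circ$; $U$ does not intertwine that algebra with anything acting on $L^2(B,\varphi)$. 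So what WOT-continuity of $\Ad(U)$ actually yields is only the inclusion $\Phi(\Psi(P))=\overline{|\bbF\times\bbP|\blacktriangleright}^{\mathsf{WOT}}\subseteq \Ad(U)[P]$. The reverse inclusion is equivalent to the claim that $\bigoplus_Z F(Z)\otimes\bbP(Z)$ is WOT-dense in $P=\overline{\bigoplus_Z F''(Z)^\circ\otimes\bbP(Z)}^{\mathsf{WOT}}$, and this is the entire content of the proposition: it does not follow from the $\|\cdot\|_2$-density of $F(Z)$ in $F''(Z)^\circ$ by a limiting argument, because the operator norm of $(\xi\otimes f)\rhd$ is controlled by the bounded-vector norm of $\xi$, not its $2$-norm, so norm-convergent nets of coefficients need not give WOT-convergent multiplication operators. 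Relatedly, the "main obstacle" you flag --- that $U$ restricts to the $\bbP$-graded subspaces --- is the easy part, since $U$ is diagonal in the $\Irr(\cC)$-grading.

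The paper closes exactly this gap with a second application of the Galois correspondence \cite[Corollary 7.14]{MR3948170}: setting $D:=\Psi(P)$, the intermediate subfactor $N\subseteq D''\subseteq M$ corresponds to a connected W*-subobject $\bbOne\subseteq\bbD\subseteq\bbP$ with fibers $\bbD(Z)=\Hom_{N\text{-}N}(F''(Z)^\circ\to D''\Omega)$; since $D''\supseteq F(Z)\otimes\bbP(Z)$ and $F(Z)\subseteq F''(Z)^\circ$ is dense by \ref{Hilb:Dense}, one gets $\bbD=\bbP$ and hence $D''=P$. The point is that the Galois correspondence detects intermediate algebras through the $L^2$-closures of $Q\Omega$, where $\|\cdot\|_2$-density of coefficients \emph{is} enough. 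You should either import this argument or supply an equivalent replacement; as written, the step asserting the displayed equality of WOT-closures is unjustified.
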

\begin{proof}
    We shall systematically omit the maps $\Omega^a$ and $\eta^a$ from the definition of compatibility to ease notation and assume they are canonical inclusions. 
    Furthermore, the difference between taking double commutators in $L^2|\bbB|_{\bbF''}$ or $L^2(B, \varphi)$ is immaterial by Lemma \ref{lem:UnitaryNN}.
    
    Notice that the underlying algebra object $\bbP$ is well-defined by \cite[\S 7.3]{MR3948170}, which is moreover connected with $\bbOne\subseteq \bbP \subseteq \bbB$.
    And so, the definition of $\Psi$ is meaningful, giving a clearly injective map.

    By compatibility of actions, for each $Z\in\cC$ the inclusion $F(Z)\subseteq F''(Z)^\circ$ is dense. 
    Now, we claim 
    $$
    P= \overline{\oplus_Z F''(Z)^\circ\otimes \bbP(Z)}^{\sf{WOT}}\supset \overline{\oplus_Z F(Z)\otimes \bbP(Z)}^{\|\cdot\|_{\sf{op}}^{L^2(B,\varphi)}} =:D
    $$
    is {\sf WOT}-dense. 
    Otherwise, underlying $D''$ there is a connected W*-algebra object $\bbOne\subseteq\bbD\subseteq\bbP$ by the aforementioned Galois correspondence.  
    But if $\bbD\neq \bbP$ it would contradict that $F(Z)\subset F''(Z)^\circ$ is dense for all $Z\in \cC.$ 
    Thus, the W*-algebra object $\bbP$ is the same as the C*-algebra object $\bbD.$
    It follows that $\Phi\circ\Psi (P)=P$ and also that $\Phi$ is necessarily surjective.
\end{proof}

\begin{remark}
    Weather $\Psi\circ\Phi=\id$ is still unknown to us in this generality, as there might be intermediate C*-algebras $A\subseteq D\subseteq B$ which are not C*-discrete. 
    However, these has promising progress in this direction, where reasonable restraints have been found on the underlying actions making the Galois correspondence  tight; cf \cite[\S4]{2024arXiv240113989M}. 
\end{remark}

\bibliographystyle{amsalpha}

\bibliography{bibliography}
\Addresses
\end{document}